\numberwithin{equation}{section}
\theoremstyle{plain}
\newtheorem{theorem}{Theorem}[section]
\newtheorem{prop}[theorem]{Proposition}
\newtheorem{corollary}[theorem]{Corollary}
\newtheorem{lemma}{Lemma}[section]
\theoremstyle{definition}
\newtheorem{definition}[theorem]{Definition}
\newtheorem{remark}[theorem]{Remark}
\newtheorem{example}[theorem]{Example}
    \let\@internalcite\cite
    \def\cite{\def\citeauthoryear##1##2{##1, ##2}\@internalcite}
    \def\shortcite{\def\citeauthoryear##1{##2}\@internalcite}
    \def\@biblabel#1{\def\citeauthoryear##1##2{##1, ##2}[#1]\hfill}
\newcommand{\R}{\mathbb{R}}
\def\calG{\mathcal{G}}
\def\calO{\mathcal{O}}
\def\calU{\mathcal{U}}
\def\ud{{\mathcal U}_d}
\def\Z{\mathbb Z}
\def\C{\mathbb C}
\DeclareMathOperator{\argmin}{argmin}
 \newcommand{\un}{\calO_D/\Sigma_d}
 \newcommand{\ve}{\vec w}
 \newcommand{\vu}{\vec u}
  \newcommand{\vv}{\vec v}
    \newcommand{\vel}{\vec \ell}
    \newcommand{\vx}{\vec x}
\begin{document}

\begin{frontmatter}
\title{Averages of Unlabeled Networks: Geometric Characterization and Asymptotic Behavior}  %\thanksref{T1}}
\runtitle{Averages of Unlabeled Networks}
%\thankstext{T1}{Footnote to the title with the ``thankstext'' command.}

\begin{aug}
\author{\fnms{Eric D.} \snm{Kolaczyk}\thanksref{m1}\ead[label=e1]{kolaczyk@bu.edu}},
\author{\fnms{Lizhen} \snm{Lin}\thanksref{m2}\ead[label=e2]{lizhen.lin@nd.edu}},
\author{\fnms{Steven} \snm{Rosenberg}\thanksref{m1}\ead[label=e3]{sr@bu.edu}},
\author{\fnms{Jackson} \snm{Walters}\thanksref{m1}\ead[label=e4]{jackwalt@bu.edu}}
\and
\author{\fnms{Jie} \snm{Xu}\thanksref{m1}\ead[label=e5]{xujie@bu.edu}}

\runauthor{Kolaczyk et al.}

\affiliation{Boston University\thanksmark{m1} and the University of Notre Dame\thanksmark{m2}}

\address{Department of Mathematics and Statistics\\
Boston University, USA\\
\printead{e1}\\
\printead{e3}\\
\printead{e4}\\
\printead{e5}}

\address{Department of Applied and Computational\\ Mathematics and Statistics\\
The University of Notre Dame, USA\\
%\phantom{E-mail:\ }\ 
\printead{e2}}
\end{aug}

\begin{abstract}
It is becoming increasingly common to see large collections of network data objects -- that is, data sets in which a network is viewed as a fundamental unit of observation.  As a result, there is a pressing need to develop network-based analogues of even many of the most basic tools already standard for scalar and vector data.  In this paper, our focus is on averages of unlabeled, undirected networks with edge weights.  Specifically, we (i) characterize a certain notion of the space of all such networks, (ii) describe key topological and geometric properties of this space relevant to doing probability and statistics thereupon, and (iii) use these properties to establish the asymptotic behavior of a generalized notion of an empirical mean under sampling from a distribution supported on this space.  Our results rely on a combination of tools from geometry, probability theory, and statistical shape analysis.  In particular, the lack of vertex labeling necessitates working with a quotient space modding out permutations of labels.  This results in a nontrivial geometry for the space of unlabeled networks, which in turn is found to have important implications on the types of probabilistic and statistical results that may be obtained and the techniques needed to obtain them. 
\end{abstract}

\begin{keyword}[class=MSC]
\kwd[Primary ]{62E20}
\kwd{62G20}
\kwd[; secondary ]{53C20}
\end{keyword}

\begin{keyword}
\kwd{fundamental domain}
\kwd{Fr{\'e}chet mean}
\kwd{graphs}
\end{keyword}

\end{frontmatter}

\section{Introduction}

Over the past 15\textendash 20 years, as the field of network science has exploded with activity, the majority of attention has been focused upon the analysis of (usually large) {\em individual} networks.  See~\cite{jackson2008social,kolaczyk2009statistical,newman2010networks}, for example.  While it is unlikely that the analysis of individual networks will become any less important in the near future, it is likely that in the context of the modern era of `big data' there will soon be an equal need for the analysis of (possibly large) {\em collections} of (sub)networks, i.e., collections of network data objects.

We are already seeing evidence of this emerging trend.  For example, the analysis of massive online social networks like Facebook can be facilitated by local analyses, such as through extraction of ego-networks (e.g., \cite{gjoka2010walking}).  Similarly, the $1000$ Functional Connectomes Project, launched a few years ago in imitation of the data-sharing model long-common in computational biology, makes available a large number of fMRI functional connectivity networks for use and study in the context of computational neuroscience (e.g, \cite{biswal2010toward}).  It would seem, therefore, that in the near future networks of small to moderate size will themselves become standard, high-level data objects.  

Faced with databases in which networks are the fundamental unit of data, it will be necessary to have in place a network-based analogue of the `Statistics 101' tool box, extending standard tools for scalar and vector data to network data objects.  The extension of such classical tools to network-based datasets, however, is not immediate, since networks are not inherently Euclidean objects. Rather, formally they are combinatorial objects, defined simply through two sets, of vertices and edges, respectively, possibly with an additional corresponding set of weights.  Nevertheless, initial work in this area demonstrates that networks can be associated with certain natural Euclidean subsets and furthermore demonstrates that through a combination of tools from geometry, probability theory, and statistical shape analysis it should be possible to develop a comprehensive, mathematically rigorous, and computationally feasible framework for producing the desired analogues of classical tools.

For example, in our recent work~\cite{ginestet2014hypothesis} we have characterized the geometry of the space of all labeled, undirected networks with edge weights, i.e., consisting of graphs $G=(V,E,W)$, for weights $w_{ij}=w_{ji}\ge 0$, where equality with zero holds if and only if $\{i,j\}\notin E$.  This characterization allowed us in turn to establish a central limit theorem for an appropriate notion of a network empirical mean, as well as analogues of classical one- and two-sample hypothesis testing procedures.  Other results of this type include additional work on asymptotics for network empirical means~\cite{tang2016law} and regression modeling with a network response variable, where for the latter there have been both frequentist~\cite{cornea2016regression} and Bayesian~\cite{david-bio} proposals.  Work in this area continues at a quick pace -- see, for example, \cite{levina}, which proposes a classification model based on network-valued inputs and \cite{ddurante2014nonparametric}, which proposes a nonparametric Bayes model for  distributions on populations of networks.  Earlier efforts in this space have focused on the specific case of trees.  Contributions of this nature include work on central limit theorems in the space of phylogenetic trees\\\cite{Billera2011,barden13} and work by Marron and colleagues
\cite{wang2007object,aydin2009principal} in the context of so-called object-oriented data analysis with trees.

To the best of our knowledge, nearly all such work to date pertains to the case of \emph{labeled} networks:  that is, to networks in which the vertices $V$ have unique labels, e.g., $V=\{1,\ldots, d\}$.  In fact, \emph{unlabeled} networks have received decidedly less attention in the network science literature as a whole but nevertheless arise in various important settings.  For example, surveys of so-called `personal networks' are common in social network analysis, wherein individuals (`egos') are surveyed for a list of $d$ others (`alters') with whom they share a certain relationship (e.g., friendship, colleague, etc.) and only common patterns across networks in the structure of the relationships among these others within each network are of interest.  This leads to analyses that either ignore vertex labels or for which vertex labels are simply not available (e.g., through de-identification).  See~\cite{mccarty2002structure}, for example.  Other variations on this idea include, for example, the study of neighborhood subgraphs in the context of online social networks (e.g., \cite{ugander2013subgraph, mcauley2014discovering}), \\
where now the number of vertices $d$ in these neighborhoods typically varies.  More generally, the study of ego-networks (whether inclusive or exclusive of the ego vertex and its edges -- confusingly, the term is used for both cases) under privacy constraints can increasingly be expected to be a natural source of collections of unlabeled networks.

In this paper, our focus is on averages of unlabeled, undirected networks with edge weights.  Adopting a perspective similar to that in our previous work~\cite{ginestet2014hypothesis}, we (i) characterize a certain notion of the space of all such networks, (ii) describe key topological and geometric properties of this space relevant to doing probability and statistics thereupon, and (iii) use these properties to establish the asymptotic behavior of a generalized notion of an empirical mean under sampling from a distribution supported on this space.  In particular, adopting the notion of a Fr{\'e}chet mean, we establish a corresponding strong law of large numbers and a central limit theorem. In contrast to~\cite{ginestet2014hypothesis}, where the corresponding space of networks was found to form a smooth manifold, here the lack of vertex labeling necessitates working with a quotient space modding out permutations of labels.  As a result, we have only an orbifold -- a more general geometric structure -- which in turn is found to have important implications on the types of probabilistic and statistical results that may be obtained and the techniques needed to obtain them.  

The nature of our work is in the spirit of statistics on manifolds and statistical shape analysis, which employs the geometry of manifolds or shape spaces for defining Fr\'echet means and developing large sample theory of their sample counterparts for inference.  See\\
 \cite{rabibook} for a rather comprehensive treatment on the subject.
Our approach to studying the entire  family of networks subject to an equivalence relation under a group action, via forming the associated quotient or moduli space, is a common theme in modern geometry, including gauge theory~\cite{donaldson1990geometry}, symplectic topology~\cite{mcduff2004j}, and algebraic geometry\\\cite{cox1999mirror,vakil2003moduli}.  The appearance of orbifolds, often much more complicated than in our case, is quite common.  

Finally, there is a large literature on graph limits, for which substantial work has been done on analysis of appropriate spaces of networks (e.g., \cite{lovasz2012large}).  While there are high-level connections between graphons as a space of equivalence classes (as studied in the literature) and unlabeled networks as another space of equivalence classes (as studied here), a meaningful comparison between the two is not immediate.  For example, while one natural source of unlabeled networks is as finite-dimensional restrictions of graphons, our framework encompasses both these and more general cases (e.g.,  {\em dependent} generation of (non)edges, conditional on a graphon).  On the other hand, the distance defined in our framework for weighted graphs is the Euclidean norm taken over the equivalence classes of weighted matrices under the group action of the permutation group, while the cut distance commonly used for graphons is defined over the equivalent classes of bivariate functions under the group action of all measure-preserving transformations.  \textcolor{black}{The permutation group is a finite group, as compared to the infinite dimensional  group of all measure-preserving transformations.}    Ultimately, however,  we note too that, whereas the focus in the graphon literature typically is on the case of a single network asymptotically increasing in size (or finite-dimensional restrictions thereof), here the focus is on asymptotics in many networks, with the dimension fixed.  
These and related issues suggest that a simple comparison is unlikely to be forthcoming.

The organization of this paper is as follows.  In Section~\ref{eq-sec2} we present our characterization of the space of unlabeled networks.  Results from our investigation of the asymptotic behavior of the Fr{\'e}chet empirical mean are then provided in Section~\ref{sec:net.aves}.  While a strong law of large numbers is found to emerge under quite general conditions, establishing just when conditions dictated by the current state of the art for central limit theorems on manifolds hold turns out to be a decidedly more subtle exercise.  This latter is the focus of Section~\ref{sec-uniqueness}.  Some additional discussion of open problems may be found in Section~\ref{sec:disc}.  The Appendices discuss implementation issues for the main theoretical results in the paper, and the Supplementary materials contain several results related to the latter.
%Most proofs and related results have been collected in the appendix.  [EK: Point to supplemental?]

After we submitted this paper, we became aware of related work of B. Jain.   In~\cite[Thm.~4.2]{Jainb2016} a result is stated that is equivalent to our main result on the uniqueness of the Fr\'echet mean (Theorem 4.5).  However, no proof is given in~\cite{Jainb2016}, although the companion paper~\cite{Jaina2016} contains much of the material needed for a proof.  Here, in Section~\ref{sec-uniqueness}, we give a complete proof of this result.  In addition, we also give two extensions in Supplement C.  Proofs of both the main result and the extensions are nontrivial.  On the computational side, our work gives an algorithm for the computation of fundamental domains (see Appendix A), while
\cite[\S3.3]{Jaina2016} gives an algorithm  for iteratively approximating the sample mean for the Fr\'echet function of a distribution on the space of unlabeled networks.  Both algorithms are  addressing NP-hard problems.

\section{The space of unlabeled networks}
\label{eq-sec2}

Our ultimate focus in this paper is on a certain well-defined notion of an `average' on elements drawn randomly from a `space' of unlabeled networks and on the statistical behavior of such averages.  Accordingly, we need to establish and understand the relevant topology and geometry of this space.  We do so by associating labeled networks with vectors and mapping those to unlabeled networks through the use of equivalence classes in an appropriate quotient space.  In this section we provide relevant definitions, characterization, and illustrations of this space of unlabeled networks.

\subsection{The topological space of unlabeled networks}

 Let $G = (V,E)$ be a labeled, undirected graph/network with weighted edges and with $d$ vertices/nodes.  We always think of $E$ as having $D:= {d\choose 2}$ elements, where some of the edge weights can be zero. We think of the edge weight between vertices $i$ and $j$ as the strength of some unspecified relationship between $i$ and $j$.  
%\footnote{This seems preferable to interpreting the edge length as the cost of traveling between the two vertices.  If two vertices do not share an edge, we would have to decide if we assign infinite length to this phantom edge, or if we insert an edge with the length of the shortest path between the vertices.}  
  
 Let $\Sigma_d$ be the group of permutations of $\{1,2,\ldots,d\}.$ A permutation $\sigma \in \Sigma_d$ of the $d$ vertex labels  technically produces a 
  new graph $\sigma G$, but with no new information.  To define $\sigma G$ precisely, 
  note that  the weight function $w_G  :E\to \R_{\geq 0}$ 
  can be thought of as a symmetric function $w_G: V\times V\to \R_{\geq 0}$, with $w_G(i,j)$ 
  the weight of the edge joining vertex $i$ and vertex $j$ in $G$. 
  Therefore
  the action of $\Sigma_d$ on $w_G$ is given by 
  $$(\sigma\cdot w_G)(i,j) = w_G(\sigma^{-1}(i), \sigma^{-1}(j)).$$
  (The inverse guarantees that $(\sigma\tau)\cdot w_G = \sigma\cdot(\tau\cdot 
  w)_G.$)  
  Note that for general $G$, not all permutations of the entries of $w_G$ are of the form $\sigma\cdot w_G,$ as $w_G$ may have $[d(d-1)/2]! $ distinct permutations and $\Sigma_d$ has $d!$ elements

In summary, $\sigma G$ is defined to be the graph on $d$ vertices with weight function 
  $\sigma\cdot w_G:E\to \R_{\geq 0}.$ 
%  \noindent {\bf Question:}  It may be better to set $\ell_G(i,j) = \infty$ if there is no edge connecting vertices $i$ and $j$; i.e. it is infinitely hard to go directly from $i$ to $j$.  
%  \section{Manifold structure}
  Let $\calG = \calG_d$ be the set of all labeled graphs with $d$ vertices.  Then the quotient space 
  $$\calU_d = \calG_d/\Sigma_d$$
   is the space of unlabeled graphs, the object we want to study.  This means that an unlabeled network $[G]\in \calU_d$ is
   an equivalence class 
   $$[G] := \{\sigma\cdot G: \sigma\in\Sigma_d\}.$$
   
 As we now explain, $\calG_d$ looks like an explicit subset of $\R^d$, and so is easy to picture.  In contrast, the quotient space $\calU_d$ is difficult to picture.  Nevertheless, as we describe in the following paragraphs, the topology of $\calU_d$ may be characterized through standard point-set topology techniques, with the conclusion that everything works as well as possible. 
 Readers who wish can safely skip to the examples in Section~2.2. 
   
  Fix an ordering of the vertices $1,...,d$, and take the lexicographic ordering 
  $\{(i,j): 1 \leq i < j\leq d\}$ on the set of edges. (Thus $(i,j)<(k,\ell)$ if $i<k$ or $i=k$ and $j<\ell.$)  
%\begin{example} For $d=3$, we could take the lexicographic ordering $ (1,2), (1,3), (2,3).$ Thus
%$(1,2)\leftrightarrow 1, (1,3)\leftrightarrow 2, (2,3)\leftrightarrow 3.$ This corresponds to listing the entries in the upper triangle, row by row,  of the Laplacian of the graph.
%\end{example}
 Given this ordering, we get an injection 
$$\alpha:\calG_d\to \R^{D},\ \  \alpha(G) = (w_1(G),\ldots,w_D(G)),$$
where $w_i(G)$ is the weight of the $i^{\rm th}$ edge of $G$.  The image of $\alpha$ is the first
``octant" $\calO_D = \{\vec x = (x^1,\ldots, x^D): x^i \geq 0\}.$ 
%$\calO_D$ is the basic example of a manifold with corners \cite[somewhere]{leejm2}. Declaring $\alpha$ to be a diffeomorphism puts a smooth manifold with corners  structure on $\calG$.
For simplicity, we choose the standard Euclidean metric on $\calO_D$.  This pulls back via $\alpha$ to a metric on $\calG_d$ with the desirable property that two networks are close iff their edge weights are close. 
%The Euclidean metric on $\calO_D$ pulls back via $\alpha$ to the obvious metric on $\calG_d$: two networks are close iff their edge weights are close. 
 Similarly, the standard topology on $\calO_D$ (an open ball in 
$\calO_D$ is the intersection of an open $\R^D$-Euclidean ball with $\calO_D$) pulls back to a topology on $\calG_D.$  (This just means that $A\subset \calG_D$ is open iff
$\alpha(U)$ is open in $\calO_D.$  This makes $\alpha$ a homeomorphism.) Just as in $\R^D$, the metric and topology are compatible:  a sequence of graphs/weight vectors $\vec x_i$ in $\calO_D$ converges to a graph/weight vector $\vec x$ in the topology of $\calO_D$ iff the distance from $\vec x_i$ to $\vec x$ goes to zero.

Via the bijection 
$\alpha$, the action of $\Sigma_d$ on $\calG_d$ transfers to an action on $\calO_D.$
First, $\sigma\in \Sigma_d$ acts on $\{1,\ldots, D\}$ by $\sigma\cdot i = j$ if $i$ corresponds to the edge $(i_1, i_2)$ and $j$ corresponds to the edge $(\sigma(i_1), \sigma(i_2))$.  Then $\sigma$ acts on
$\calO_D$ by $\sigma\cdot \vec x = (x^{\sigma^{-1}(1)},\ldots, x^{\sigma^{-1}(D)}).$  
%Again, the notation obscures the fact that this is the obvious action.  Provided I did things correctly, 
%We get
%the commutative diagram
%$$\begin{CD} \calG_d @>\sigma>> \calG_d\\
%@V\alpha V\approx V @V\alpha V\approx V\\
%\calO_D @>\sigma >> \calO_D
%\end{CD}
%$$
Since we've arranged the actions to be compatible with $\alpha:\calG_d\to\calO_D$, 
we get a well defined bijection $\overline{\alpha}$:
$$\calU_d = \calG_d/\Sigma_d \stackrel{\overline \alpha}{\to} \calO_D/\Sigma_d,\ \overline{\alpha}[G] = [\alpha(G)].$$
From now on, we just denote $\overline{\alpha}$ by $\alpha.$

To complete the topological discussion, we note that $\alpha:\calU_d\to\calO_D/\Sigma_d$ is a homeomorphism if we give both sides the quotient topology: for the map $q: \calG_d\to\calU_d$ taking a graph to its equivalence class, a  set  
  $U\subset \calU_d$ is open iff $q^{-1}(U)$ is open in $\calO_D$.  The quotient topology on $\calO_D/\Sigma_d$ is defined similarly.

%[SR/JW:  This is essentially Section~2 from the working document in Dropbox.]

\subsection{Examples of quotient spaces}

As a warmup, we first give a simple example of a quotient space resulting from the action of a finite group on a Euclidean space.  This particular example is important in providing a relevant non-network analogy to our network-based results.  We will revisit it frequently throughout the paper.

\begin{example}
\label{ex-r2modz4}
The group $\Z_4 = \{0,1,2,3\}$ acts on the plane $\R^2$ by rotation counterclockwise by $90$ degrees: specifically, for $k\in \Z_4$ and $z\in \R^2 = \C$, 
$$k\cdot z = e^{ik\pi/2}\cdot z.$$
Thus $0\cdot z = z,  1\cdot z = e^{i\pi/2} z,$ etc. A point in the quotient space $\C/\Z_4$ is the 
 set $[z_0] = \{ e^{ik\pi/2}z_0: k\in \Z_4\}.$  The set $[z_0]$ is called the orbit of $z_0$ under $\Z_4.$  Note that every orbit is a four element set except for the exceptional orbit $[\vec 0] = \{\vec 0\}.$

The closed first quadrant $F = \{(x,y): x\geq 0, y\geq 0\}$ is a {\em fundamental domain} for this action; {\em i.e.}, each orbit $[z_0]$ has a unique representative/element in $F$, except possibly for the orbits of points on the boundary $\partial F = \{(x,y): x=0 \ {\rm or}\ y=0\}$ of $F$.  Orbits of boundary points like $[(5,0)]$ have two representatives $(5,0), (0,5)$ in $F$, while the origin of course has only one representative.  
\end{example}

Here is a precise definition of a fundamental domain for the action of a group $G$ on a set $S$:
\begin{definition} \label{2.2}
$F\subset S$ is a fundamental domain for the action of $G$ if 
(i) $S $ is the union of the orbits of $F$ ($S = \cup_{k\in G} k\cdot F$);

\noindent (ii)  orbits can intersect only at boundary points ($k_1\cdot F\cap k_2\cdot F = \emptyset$ or $k_1\cdot F\cap k_2\cdot F
\subset \partial (k_1\cdot F)\cap\partial(k_2\cdot F)$). 
\end{definition}

In this example, $G=\Z_4$ and $S = \C.$ It follows that the quotient map $q:F\to \C/\Z_4$ is surjective, a homeomorphism on the interior of $F$ (where $\C/\Z_4$ has the quotient topology), and finite-to-one on the boundary of $F$.

If we want to picture a set that is bijective to $\C/\Z_4$, we could take e.g. $F'$ to be $F$ minus the positive $y$-axis.  This is not so helpful topologically or geometrically, as the points $[(5,0)]$ and $[(0,5.01)]$ have close representatives $(5,0), (5.01,0)$ in $F$, while their representatives $(5,0)$ and
$(0,5.01)$ are not close in $F'$.  In particular, the sequence $(10^{-k}, 5)$ does not converge in $F'$, but the orbits $[10^{-k}, 5]$ converge to $[0,5] = [5,0]$ in $\C/\Z_4.$  Thus $F'$ does not give us a good picture of $\C/\Z_4$ topologically.

In summary, it is much better to keep both positive axes in $F$, and to consider $\C/\Z_4$ as (in bijection with) $F$ with the boundary points $(a,0)$ and $(0,a)$ ``glued together."  More precisely, we have a bijection
$$\beta:\mathcal F :=  \frac{F}{(a,0)\sim (0,a)} \to \C/\Z_4,$$
where the denominator indicates that the two point set $\{(a,0), (0,a)\}$ ($a\neq 0$) is one point of $\mathcal F$, 
while all other points of $F$ correspond to a single point in $\mathcal F.$  At the price of this gluing, 
we now have that $\beta$ is a homeomorphism: in particular,  $\lim_{i\to\infty} x_i = x$ in $\mathcal F$ iff
$\lim_{i\to\infty} \beta(x_i) = \beta(x)$ in $\C/\Z_4.$
% points are close in $\mathcal F$ iff they are close in
%$\C/\Z_4.$ 
 (Technical remark: $\mathcal F$ gets the quotient topology from the standard topology on $F$ and the obvious surjection $q:F\to\mathcal F.$)
 % and ``close" refers to the Procrustean distance defined in Section~3.)

Although this seems a little involved, it is quite easy to perform the gluing in $\mathcal F$ in rubber sheet topology: stretching the interior of $F$ to allow the gluing of the two axes shows that $\mathcal F$ and hence $\C/\Z_4$ is a hollow cone.  See Figure~\ref{fig:CmodZ4}.\footnote{ Color versions of all figures are in Supplement E. In figures below, regions called red appear as white in black-and-white reproductions, and regions called blue appear as gray.}
%\end{example}

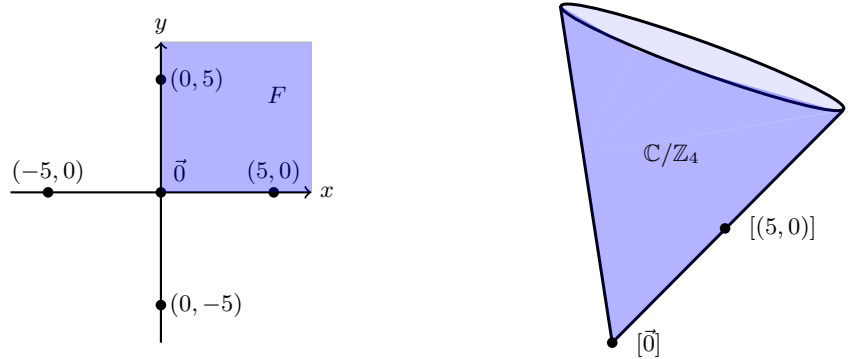
\begin{figure}
$$
\begin{tikzpicture}
\draw[thick, ->](-2,0) -- (2,0);
\node[right] at (2,0) {$x$};
\draw[thick, ->] (0,-2) -- (0,2);
\node[above] at (0,2) {$y$};
\fill (0,0) circle (2pt);
\node[right] at (.05, .3) {$\vec 0$};
\fill (1.5,0) circle (2pt);
\fill (-1.5,0) circle (2pt);
\fill (0,1.5) circle (2pt);
\fill (0,-1.5) circle (2pt);
\node[above] at (1.5,0) {$(5,0)$};
\node[right] at (0,1.5) {$(0,5)$};
\node[above] at (-1.5,0) {$(-5,0)$};
\node[right] at (0,-1.5) {$(0,-5)$};
\node[right] at (1.3,1.3) {$F$};
\filldraw[ fill = blue, opacity = 0.3] (0,0) -- (2,0) -- (2,2) -- (0,2) -- cycle;

%\draw[thick, rotate = -20, shift = {(1.15,5.15)}] (5,-1) ellipse (2.0cm and .2cm);
%\draw[thick] (6,-2) -- (5.31,2.48);
%\draw[thick] (6,-2) -- (9.07,1.09);
%\node[right] at (6.2,-2) {$[\vec 0]$};
%\node[right] at (7.7,-.48) {$[(5,0)]$};
%\fill (6,-2) circle (2pt);
%\fill (7.5, -.48) circle (2pt);
%\node[right] at (6.3,.5) {$\C/\Z_4$};

\draw[line width=.04 cm,   rotate = -20, shift = {(1.15,5.15)}] (5,-1) ellipse (2.0cm and .2cm);
\filldraw[fill=blue, opacity = 0.1,  rotate = -20, shift = {(1.15,5.15)}] (5,-1) ellipse (2.0cm and .2cm);
\draw[line width =.04 cm] (5.31,2.48) -- (6,-2) -- (9.07, 1.09);
\fill[fill=blue, opacity = 0.3, thick] (5.31,2.48) -- (6,-2) -- (9.07, 1.09) -- (5.6,0.5) -- (6.3,1.9);
\fill[fill=blue, opacity = 0.3] (6.3,1.9) -- (5.6,0.5) -- (7.0, 1.65);
\fill[fill=blue, opacity = 0.3] (7.0, 1.65) -- (5.6,0.5) -- (9.0, 1.08);

%\draw[thick] (6,-2) -- (9.07,1.09);
\node[right] at (6.2,-2) {$[\vec 0]$};
\node[right] at (7.7,-.48) {$[(5,0)]$};
\fill (6,-2) circle (2pt);
\fill (7.5, -.48) circle (2pt);
\node[right] at (6.3,.5) {$\C/\Z_4$};

\end{tikzpicture}
$$
\caption{ In the figure on the left, $F$ is a fundamental domain $F$ for the action of $\Z_4$ on $\C$.  The four point orbit of $(5,0)$ and the one point orbit of $\vec 0$ are shown.  In the figure on the right, the quotient space $\C/\Z_4$ is drawn as a hollow cone given by taking $F$ and gluing $(x,0)$ to $(0,x)$. }
\label{fig:CmodZ4}
\end{figure}

%${}$\hskip 0.85 in (a)\hskip 2.2 in (b)
%\begin{itemize}
%\item[] Figure 1:  In (a), $F$ is a fundamental domain $F$ for the action of $\Z_4$ on $\C$.  The four point orbit of $(5,0)$ and the one point orbit of $\vec 0$ are shown.  In (b), the quotient space $\C/\Z_4$ is drawn as a cone given by taking $F$ and gluing $(x,0)$ to $(0,x)$. 
%\end{itemize}

\begin{example}

We discuss the case of a network with three vertices.   
This is a deceptively easy case, as $3=d = D$ implies that every permutation of the $D$ edge weights comes from a permutation in $\Sigma_d.$   In higher dimensions, the details are more complicated.

In Supplement A, we describe the quotient space $\calU_3$ of unlabeled graphs directly.  However,  
it is easier to picture $\calU_3$  by finding a  fundamental domain  $F$ inside $\calO_3$ for the action of $\Sigma_3.$  As in the previous example,  $F$ is a closed set 
such that the quotient map $q|_F:F\to \calU_3$ is a continuous surjection, a homeomorphism  from
 the interior of $F$ to  its image, and a finite-to-one map on the boundary 
$\partial F$ of $F$. %(This is discussed in more detail in Section 4.)
Thus $F$ represents $\calU_3$ bijectively except for some gluings on the boundary.  This is illustrated in Figure 2, where 
$F = \{(x,y,z): x\geq y\geq z\geq 0\}.$  Again, the case $d=3$ is deceptively easy, as $F$ is a bijection even on  $\partial F.$
\begin{figure}
$$\tdplotsetmaincoords{60}{140}
\begin{tikzpicture}[tdplot_main_coords,scale = .9]
\node[left] at (8,0,0) {$x$};
\draw[->, ultra thick] (1.2,0,0) -- (8,0,0);
\draw [->, ultra thick] (0,0,0) -- (0,5,0); 
\node[right] at (0,5,0) {$y$};
\draw[-> ,ultra thick ] (0,0,0) -- (0,0,5);
\node[above] at (0,0,5) {$z$};

\filldraw[draw=black, fill = blue, opacity = .9] (0,0,0) -- (7,7,0) -- (9,9,10) --cycle; %x=y
\node[right, blue] at (0,.5,-1) {$x=y$};
%\filldraw[draw=black, fill = red, opacity = 0.3] (0,0,0) -- (5,0,5) -- (5,5,5) -- (0,5,0)--cycle;%x=z
%\node[right, red] at (0,4.5,1.5) {$x=z$};
\filldraw[draw=black, fill = green, opacity = 0.3] (0,0,0) -- (9,9,10) -- (7,0,0) --cycle;%y=z
\node[right,thick, green] at (0,-6,-2) {$y=z$};

\draw[thick,magenta] (0,0,0) -- (9,9,10);\draw[thick, magenta] (7,0,0) -- (9,9,10);
\draw[thick,magenta] (7,7,0) -- (9,9,10); \draw[thick,magenta] (7,0,0) -- (7,7,0);
\draw[thick,magenta] (0,0,0) -- (7,7,0);
\draw[thick, magenta, dashed] (9,9,10) -- (19,19,190/9);
\draw[thick, magenta, dashed] (7,7,0) -- (12,12,0);
\draw[thick, magenta, dashed] (7,0,0) -- (12,0,0);
\draw[thick, magenta, dashed] (10.2,0,0) -- (15.2,15.2,152/9)--(10.4,10.4,0) --cycle;
%\draw[thick, magenta, dashed] (10.2,0,0) -- (15.8,15.8,160/9)--(10.2,10.2,0) --cycle;

%\node[] at (3,1.5,0) {$x>y>z$};\node[] at (4,2,0) {\bf F};
\draw[fill=yellow, opacity = .2] (0,0,0) -- (7,0,0) -- (7,7,0) -- cycle;
\node[right,thick, orange] at (7,1,-1) {$x=0$};

\draw (9.7,2,0) -- (9.7,3.3,.57) -- (9.7,2.65,1) -- cycle;
 \fill (9.7,2,0) circle (2pt);
  \fill (9.7,3.3,.57) circle (2pt);  
\fill (9.7,2.65,1) circle (2pt);
\node[right] at (9.7, 3, .9) {$1$}; 
\node[below] at (9.7, 2.65, .2) {$3$}; 
\node[left] at (9.7, 2.3, .6) {$2$}; 
\fill (5,3,1.7) circle(2pt);
\node[above] at (5,3,1.7) {$P$};
%\draw[arrows = {triangle 60-{}}] (4.7, 2.3, 1.1) -- ++(6.3,2.5,1.2) ;
\draw[->,thick] (8.2,1.9,0) -- (5.2,3.1,1.7);

\draw[fill=yellow, opacity = .2] (9,6,0) -- (9,7.3,.57) -- (9,6.65,1) -- cycle;
 \fill (9,6,0) circle (2pt);
  \fill (9,7.3,.57) circle (2pt);  
\fill (9,6.65,1) circle (2pt);
\node[right] at (9, 7, .9) {$0$}; 
\node[below] at (9, 6.65, .2) {$3$}; 
\node[left] at (9, 6.3, .6) {$2$}; 
\fill (3.9,1.72 ,0) circle (2pt);
%\draw[arrows = {triangle 60-{}}] (6.6, 3.5,0) -- ++(6.1,3.1,0) ;
\draw[->,thick] (8.7,6.4,1) -- (4.2, 1.95, 0);

\draw[fill=blue, opacity = .9] (3,4,0) -- (3,5.3,.57) -- (3,4.65,1) -- cycle;
 \fill (3,4,0) circle (2pt);
  \fill (3,5.3,.57) circle (2pt);  
\fill (3,4.65,1) circle (2pt);
\node[right] at (2.1, 3.9, 0) {$1$}; 
\node[below] at (2.7, 4.3, 0) {$2$}; 
\node[left] at (2.4, 3.55, 0) {$2$}; 
\fill (3,2.8,1) circle(2pt);
\draw[->,thick] (2.4,3.1,0) -- (3, 3, 1);

\draw [fill = green, opacity = .3](5,-2.2,0) -- (5,-.9,.57) -- (5,-1.5,1) -- cycle;
 \fill (5,-2.2,0) circle (2pt);
  \fill (5,-.9,.57) circle (2pt);  
\fill (5,-1.5,1) circle (2pt);
\node[right] at (5, -1.2, .9) {$2$}; 
\node[below] at (5, -1.5, .3) {$3$}; 
\node[left] at (5, -1.85, .55) {$2$}; 
\fill (4.5,.2, 1.05) circle(2pt);
\draw[->,thick] (4.9,-.75,1.05) -- (4.5, 0, 1);

\draw [fill = red, opacity = .6](3,4,5.2) -- (3,5.3,5.77) -- (3,4.65,6.2) -- cycle;
 \fill (3,4,5.2) circle (2pt);
  \fill (3,5.3,5.77)  circle (2pt);  
\fill (3,4.65,6.2) circle (2pt);
\node[right] at (3, 5, 6.1) {$1$}; 
\node[below] at (3, 4.65, 5.4) {$1$}; 
\node[left] at (3, 4.3, 5.8) {$1$}; 
\fill (4,4, 40/9) circle(2pt);
\draw[->,thick] (3,3.85,5) -- (4, 4.3, 43/9);

\draw [fill = black, opacity = .6](3,6,3.5) -- (3,7.3,4.07) -- (3,6.65,4.5) -- cycle;
 \fill (3,6,3.5) circle (2pt);
  \fill (3,7.3,4.07) circle (2pt);  
\fill (3,6.65,4.5) circle (2pt);
\node[right] at (3, 7, 4.4) {$0$}; 
\node[below] at (3, 6.65, 3.8) {$0$}; 
\node[left] at (3, 6.3, 4.1) {$0$}; 
\fill (0,0, 0) circle(2pt);
\draw[->,thick] (3, 5.7,3.85) -- (0, .3, .2);

\end{tikzpicture}
$$

\caption{As explained in Section~4.1, the infinite solid cone, which is the region 
$\{x\geq y\geq z\geq 0\}$,  is a fundamental domain $F$ for unlabeled networks with three nodes.  %The space of unlabeled networks is given by gluing the three walls of the cone together.  
 With the convention that the bottom side of the triangle has weight $x$, the left side has weight $y$, and the right side has weight $z$, the network with edge weights $1, 2, 3$ corresponds to the point $P$ in the interior of the cone. Other networks shown are color coded to correspond to points on faces or edges of the cone.}
\end{figure}
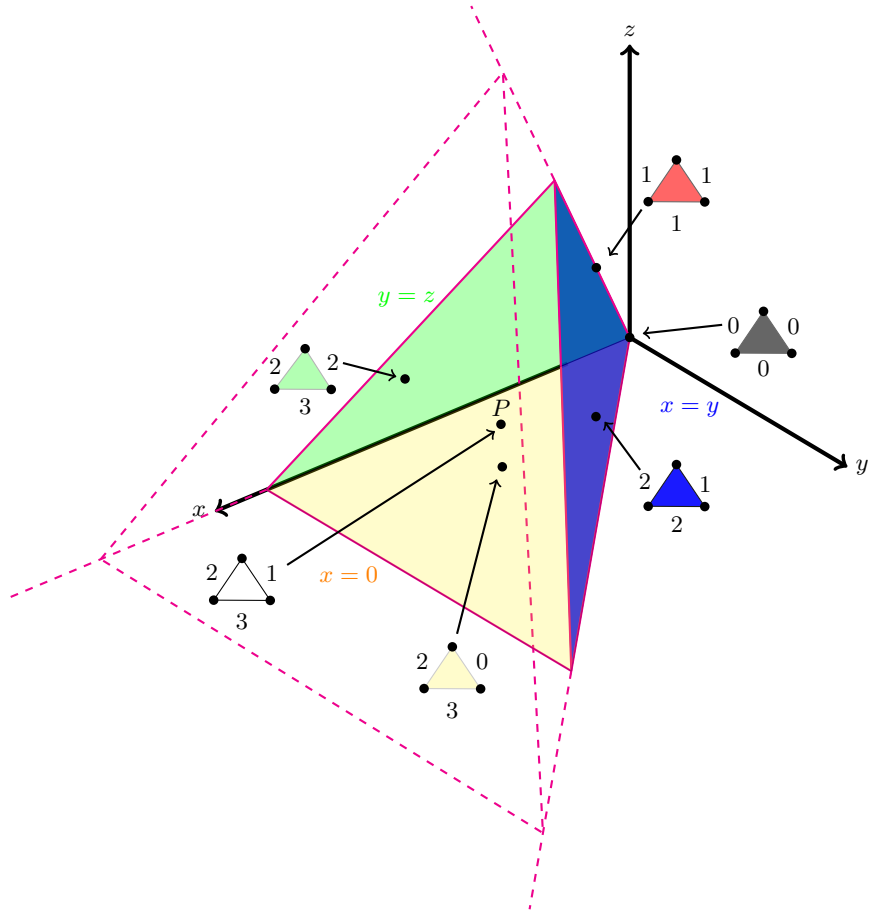

%\begin{itemize}\item[] Figure 2.  As explained in \S4.1, the infinite solid cone, which is the region 
%$\{x\geq y\geq z\geq 0\}$,  is a fundamental domain $F$ for unlabeled networks with three nodes.  %The space of unlabeled networks is given by gluing the three walls of the cone together.  
% With the convention that the bottom side of the triangle has length $x$, the left side has length $y$, and the right side has length $z$, the network with edge lengths $1, 2, 3$ corresponds to the point $P$ in the interior of the cone. Other networks shown are color coded to correspond to points on faces or edges of the cone.
%\end{itemize}

\end{example}

\medskip
The situation is more complicated for graphs with 4 (or more) vertices.  For $d=4$, if we label the edges as $(1,2),\ldots, (3,4)$, then the weight vectors 
$(1,1,1,0,0,0)$ and $(1,1,0,1,0,0)$ have the same distributions of ones and zeros, but correspond to 
binary graphs which are not in the same orbit of $\Sigma_4$.  In particular, the region 
$\{(x_1,\ldots,x_6): x_1\geq x_2\geq \ldots \geq x_6\}$ is not a fundamental domain for the action of 
$\Sigma_4.$

While a fundamental domain is harder to find in high dimensions (see Section~4), 
the overall structure of $\calU_d$ for general $d$ is similar to the $d=3$ case, with just increased notation.

 \begin{theorem}\label{stratspace} The space of unlabeled graphs $\calU_d = \calG/\Sigma_d = \calO_D/\Sigma_d$ is a stratified space. \end{theorem}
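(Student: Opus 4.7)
The plan is to construct an explicit $\Sigma_d$-equivariant stratification of $\calO_D$ and then descend it to the quotient $\calU_d$. The natural indexing set consists of pairs $(I,\pi)$, where $I\subseteq\{1,\ldots,D\}$ records which edge-weight coordinates vanish and $\pi$ is a set partition of $\{1,\ldots,D\}\setminus I$ recording which positive coordinates coincide. For each such pair set
$$S_{I,\pi} = \{\vec x\in\calO_D: x^i=0 \iff i\in I,\ \text{and}\ x^j=x^k \iff j,k\ \text{lie in a common block of}\ \pi\}.$$
Each $S_{I,\pi}$ is the intersection of the linear subspace cut out by the equalities with open half-spaces and complements of hyperplanes, hence is a smooth open submanifold of dimension equal to the number of blocks of $\pi$. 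These strata are pairwise disjoint, exhaust $\calO_D$, and satisfy the frontier condition: $\overline{S_{I,\pi}}$ is the union of those $S_{I',\pi'}$ obtained by enlarging $I$ and/or coarsening $\pi$ on the remaining indices. Thus $\{S_{I,\pi}\}$ is a locally finite stratification of $\calO_D$.

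Next I would push this stratification to the quotient. The $\Sigma_d$-action permutes the strata via $\sigma\cdot S_{I,\pi} = S_{\sigma(I),\sigma(\pi)}$, where $\Sigma_d$ acts on $\{1,\ldots,D\}$ through its induced action on edges. Hence the candidate strata of $\calU_d$ are the images $q(S_{I,\pi})$, indexed by $\Sigma_d$-orbits of pairs $(I,\pi)$. Every point of $S_{I,\pi}$ has the same stabilizer $H_{I,\pi}\leq\Sigma_d$ -- the preimage in $\Sigma_d$ of the subgroup of $\Sigma_D$ fixing $I$ setwise and each block of $\pi$ setwise -- and if $N_{I,\pi}\leq\Sigma_d$ denotes the stabilizer of the pair $(I,\pi)$, then $q(S_{I,\pi})\cong S_{I,\pi}/N_{I,\pi}$. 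Because values in distinct blocks of $\pi$ are strictly distinct on $S_{I,\pi}$, any $\sigma\in N_{I,\pi}$ that permutes the blocks nontrivially fixes no point of $S_{I,\pi}$; hence $N_{I,\pi}/H_{I,\pi}$ acts freely, and the quotient $q(S_{I,\pi})$ is a smooth manifold of the same dimension as $S_{I,\pi}$.

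Finally, the frontier condition descends from $\calO_D$ to $\calU_d$ because the orbit map $q$ is closed (being the quotient by a finite group action) and carries strata to strata: closures of strata in $\calU_d$ are therefore unions of strata, and local finiteness is preserved since the fibers of $q$ are finite.

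The main obstacle, I expect, is the combinatorial bookkeeping that intertwines the corner/face structure of $\calO_D$ with the orbit-type structure of the $\Sigma_d$-action. The substantive content lies in verifying that $H_{I,\pi}$ really is constant on all of $S_{I,\pi}$ and that $N_{I,\pi}/H_{I,\pi}$ genuinely acts freely -- i.e., that no block-permuting element of $N_{I,\pi}$ can accidentally fix a point once we have imposed the strict distinctness of values across blocks. Once this is pinned down, the manifold structure of each stratum and the descent of the frontier relation follow from standard facts about smooth actions of finite groups.
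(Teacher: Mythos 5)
Your decomposition of $\calO_D$ by the vanishing set $I$ and the coincidence partition $\pi$, with the stabilizer $H_{I,\pi}$ constant on each piece and descent to $\calU_d$ via the free action of $N_{I,\pi}/H_{I,\pi}$, is correct and is essentially the orbit-type stratification that the paper's sketch (in Supplement~A) follows. The only point worth adding is that if a stronger notion of stratified space (e.g.\ Whitney or Thom--Mather) were intended, the required regularity conditions hold automatically here because every stratum is an open subset of a linear subspace.
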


The proof is sketched in Supplement A.
By \cite{Lange}, $\R^D/\Sigma_d$ is PL or Lipschitz homeomorphic to $\R^{D-1}\times \R_{\geq 0}$, but the 
 proof does not give a cell decomposition of $\R^D/\Sigma_d$, much less of $\calU_d.$  We do have information about the topology of $\ud$:  in Supplement A we prove that $\ud$ is contractible, and more surprisingly, that the natural slice of $\ud$
 given by the hyperplane $\sum_{i=1}^D x_i = 1$ is contractible.  The practical implication of these results is that the usual topological invariants of $\ud$ and its slice (the fundamental group, the homology/cohomology groups) provide no information.

\section{Network averages and their asymptotic behavior}
\label{sec:net.aves}

In this section we define the mean of a distribution $Q$ on the space of networks and investigate the asymptotic behavior of the empirical (or sample) mean network based on an i.i.d sample of networks from $Q$. Statistical inference  can be carried  out based on the asymptotic distribution of the empirical mean.  We illustrate with an example from hypothesis testing.  The results of the previous section, characterizing the topology and geometry of the space of unlabeled networks, are essential for  achieving our goals in this section. 

%The results of the previous section suggest the relevance of tools and techniques from the field of statistical shape analysis in defining and studying the behavior of network averages.  Specifically, we adopt for our use the notion of a Frechet mean and establish conditions under which a strong law of large numbers and a central limit theorem hold.

\subsection{Network averages through Fr{\'e}chet means}  

%[LL: General definition of Frechet mean on a metric space.]

Let $Q$ be some distribution on a general metric space $(M,\rho)$.  One can define the Fr\'echet function  $f(p)$ on $M$ as 
\begin{equation}
\label{eq-frechet}
	f(p) = \int_M \rho^2(p,z) Q(dz) \;  (p \in M).
\end{equation}
If $f$ is finite on $M$ and has a unique minimizer 
\begin{align}
\mu = \argmin_p f(p), 
\end{align}
then $\mu$ is called the \emph{Fr\'echet mean} of $Q$ (with respect to the metric $\rho$). Otherwise, the minimizers of the Fr\'echet function form a \emph{Fr\'echet mean set $C_Q$}.   Given an i.i.d sample $X_1,\ldots, X_n\sim Q$ on $M$, the empirical Fr\'echet mean can be defined by replacing $Q$ with the empirical distribution $Q_n= \dfrac{1}{n}\sum_{i=1}^n \delta_{X_i}(\cdot)$, that is,
\begin{align}
\mu_n=\argmin_p \frac{1}{n}\sum_{i=1}^n\rho^2(p, X_i).
\end{align}

When $M$ is a manifold, one can equip $M$ with a metric space structure  through an embedding into some Euclidean space or employing a Riemannian structure of $M$.   Respectively, $\rho$ can be taken to be the Euclidean distance after embedding (extrinsic distance) or the geodesic distance (intrinsic distance), giving rise to extrinsic and intrinsic means. Asymptotic theory for extrinsic and intrinsic analysis has been developed in  \\\cite{rabibook},\\
\cite{ rabivic03}, \\
\cite{rabivic05}, and applied to many manifolds of interest (see e.g., \cite{abs2},\\
 \cite{dryden2009}). 

%Under broad conditions, the Fr\'echet  sample mean $\mu_n$ of the empirical distribution $Q_n= \dfrac{1}{n}\sum_{j=1}^n \delta_{Y_j}$ based on independent $M$-valued random variables $Y_j$ ($j=1,\ldots, n$) with common distribution $Q$ is a consistent estimator of $\mu$, {\em i.e.}, $\mu_n\rightarrow \mu$ almost surely as $n\rightarrow \infty$.  Here $\mu_n$ may be taken to be any measurable selection from the (random) set of minimizers of the Fr\'echet  function of $Q_n$, namely, $F_n(p) = \dfrac{1}{n}\sum_{j=1}^n  \rho^2(p,X_j)$. (See \cite{rabibook}, \cite{rabivic03}, \cite{rabivic05}, \cite{ziezold}.)

Now take $M=\calU_d$, the space of unlabeled networks  with $d$ nodes, our space of  interest, and let $Q$ be a distribution on $\calU_d$. 
Given an i.i.d sample $X_1,\ldots, X_n$ from $Q$, in order to define the Fr\'echet mean $\mu$ of $Q$ and empirical Fr\'echet mean  $\mu_n$ of $Q_n$, one needs an appropriate choice of distance on $\calU_d$.  Given the quotient space structure characterized in the previous section, i.e., $\calU_d=\calG_d/\Sigma_d$, a natural choice for the distance $\rho$ 
is the \emph{Procrustean distance} $d_P$, where
\begin{align}
\label{eq-pro}
 d_P([\vec x],[\vec y]) := \min_{\sigma_1,\sigma_2\in \Sigma_d} d_{E}(\sigma_1\cdot\vec x,
 \sigma_2\cdot \vec y),% \enskip ,
\end{align}
for  unlabeled networks $[\vec x],[\vec y]\in \calU_d$, with $\vec x$ denoting the vectorized representation of  a representative network $x$. We recall that $\calG_d$ is the set of all labeled graphs with $d$ vertices and $\Sigma_d$ is the group of permutations of $\{1,2,\ldots,d\}$.

In order to carry out statistical inference based on $\mu_n$, defined with respect to the distance \eqref{eq-pro}, some natural and fundamental questions  related to $\mu$ and $\mu_n$  need to be addressed, which we aim to do in the following subsections. Here are some of the most crucial ones: 
\begin{itemize}
\item[1.] (Consistency.) What are the consistency properties of the network empirical mean $\mu_n$, i.e., is $\mu_n$ a consistent estimator of the population Fr\'echet mean $\mu$? Can we establish some notion of a law of large numbers for $\mu_n$?
\item[2.] (Uniqueness of Fr\'echet mean.) This question is concerned with establishing general conditions on $Q$ for uniqueness of the Fr\'echet mean $\mu$.  In general this a challenging task -- indeed,  the lack of general uniqueness conditions for Fr\'echet means is still one of the  main hurdles  for carrying out intrinsic analysis on manifolds \cite{Kendall90}.  To date the most general results in the literature for generic manifolds~\cite{afsari11} force the support of $Q$ to be a  small geodesic ball to guarantee  uniqueness of the intrinsic Fr\'echet mean.  We address this question for the space of unlabeled networks in Section \ref{sec-uniqueness}. 
\item [3.] (CLT.) Once  conditions for uniqueness of $\mu$ are provided, the next key question is whether one can derive the limiting distribution for $\mu_n$ for purposes of statistical inference,  e.g., proving a central limit type of theorem for $\mu_n$, which in turn might be used for hypothesis testing. 
\end{itemize}

We first illustrate the difficult nature of these problems (in particular for question 2 above) through the  example $C=\mathbb{R}^{2}/\mathbb{Z}_{4}$   in Section \ref{eq-sec2}, by explicitly constructing a distribution on $C$ that has non-unique Fr\'echet means.

\begin{example}[Example $C=\mathbb{R}^{2}/\mathbb{Z}_{4}$  continued]
As  in Example \ref{ex-r2modz4} in Section \ref{eq-sec2}, the quotient $C=\mathbb{R}^{2}/\mathbb{Z}_{4}$ is a cone.
%the finite cyclic group $\mathbb{Z}_{4}$ acts on $\mathbb{R}^{2}$ via 90 degree rotations. 
%Topologically, the quotient $C=\mathbb{R}^{2}/\mathbb{Z}_{4}$ is a cone. 
Working in polar coordinates and taking $F_{0} = \left[0,\infty \right) \times \left[ -\pi/4,\pi/4 \right)$ to be a fundamental domain, we consider probability distributions of the form $\nu(r,\theta)=\frac{1}{Z}R(r)\chi(\theta)$, where $Z
= \int_{F_0} R(r)\chi(\theta) dr\ d\theta.$ 

%\begin{align}
%\label{ex-cone}
%\nu(r,\theta) = \frac{1}{M} \exp(-(r-\alpha)^{2})
%\end{align}
% on $C$ where $\displaystyle M = \frac{\pi^{\frac{3}{2}}}{4}(1+\text{erf}(\alpha))$ is the total mass.% or the normalization constant. 
% Here $\alpha$ is a fixed constant. See Figure 3
% %\ref{fig:non-unique_dist_6} and \ref{fig:non-unique_dist_7} 
%for  plots of the distribution function.

We can explicitly compute the Fr\'{e}chet function $f(x)$ with respect to $\nu$.
For $x=(r,\theta) \in F_{0}$,  $F_{\theta} = \left[0,\infty \right) \times \left[ -\pi/4 + \theta, \pi/4 + \theta \right)$ is a fundamental domain. For $y \in F_{\theta}$, $d_{P}(\left[x\right],\left[y\right]) = d_{E}(x,y)$. Then

\begin{align*}
f(x)=\int_{F_{\theta}} ||x-y||^{2} \nu(y)dy=\frac{1}{Z}\left(c_{1}\chi_{1}r^{2}-2c_{2}\chi_{2}(\theta)r+c_{3}\chi_{1}\right),
\end{align*}
where $c_{k}=\int_{0}^{\infty} r^{k} R(r) dr$ for $ k=1,2,3$,  $\chi_{1}(\theta)=\int_{\theta - \pi/4}^{\theta + \pi/4} \chi(t) dt$ and $\chi_{2}(\theta)=\int_{\theta - \pi/4}^{\theta + \pi/4} \chi(t)\cos(\theta - t) dt$.

The Fr\'echet mean occurs when $\partial f/\partial r = \partial f/\partial\theta = 0$, which is difficult to compute in general.  Consider the special case
\begin{equation}
\nu(r,\theta) = \frac{1}{Z}\exp(-(r-\alpha)^2) \enskip ,
\label{eq:simple.nu}
\end{equation}
where $\alpha$ is a fixed constant, $\chi(\theta) \equiv 1$, $Z= (\pi^{3/2}/4)(1 + {\rm erf}(\alpha))$; this distribution for $\alpha = 15$ is plotted in Figure 3. The minimum for this $f$ occurs at 
$$r_{0}=\frac{\sqrt{2} \left(2 \alpha +\sqrt{\pi } e^{\alpha ^2} \left(2 \alpha ^2+1\right) (\text{erf}(\alpha )+1)\right)}{\pi ^{3/2} e^{\alpha ^2} \alpha  (\text{erf}(\alpha )+1)+\pi },$$  with $\theta$ arbitrary.  When $\alpha$ is large, $r_{0} \approx \frac{2\sqrt{2}}{\pi}\alpha$. For $\alpha=15$, $r_{0} \approx 13.5348$. This shows that $\nu$ has a circle's worth of Fr\'{e}chet means; the $\theta$-independence of $\nu$ implies $\theta$-independence of the Fr\'echet means. One can see this in Figure \ref{fig-4} where the Fr\'echet function $f(x)$ is minimal and most blue on a circle of radius approximately 13.53 (corresponding to the red circle on the cone in Figure \ref{fig-3}).
 % \ref{fig:non-unique_dist_9} and its 2-$d$ projection in Figure \ref{fig:non-unique_dist_8},  
 %the Fr\'echet function does not  %seem to 
% reach a minimum at a unique location.

\begin{figure}[h]
\centering
\begin{minipage}{.5\textwidth}
  \centering
  \includegraphics[width=.8\linewidth]{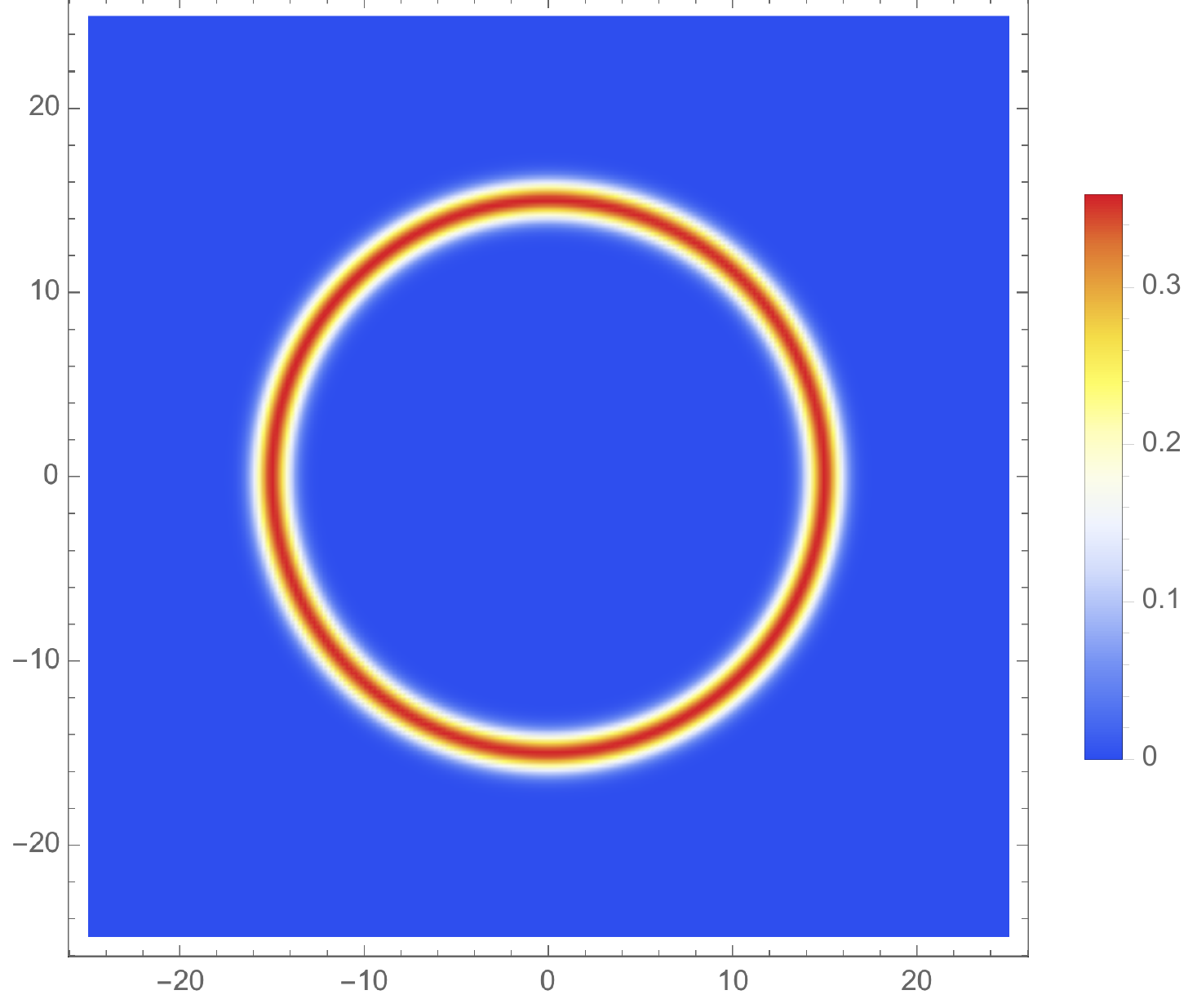}\\
  %\captionof{figure}
  %{Figure 2: Plot of probability\\ distribution  $\nu(r,\theta)$ with $\alpha=15$ on $\mathbb{R}^{2}$.}
  \label{fig:non-unique_dist_6}
\end{minipage}%
\begin{minipage}{.5\textwidth}
  \centering
  \includegraphics[width=.8\linewidth]{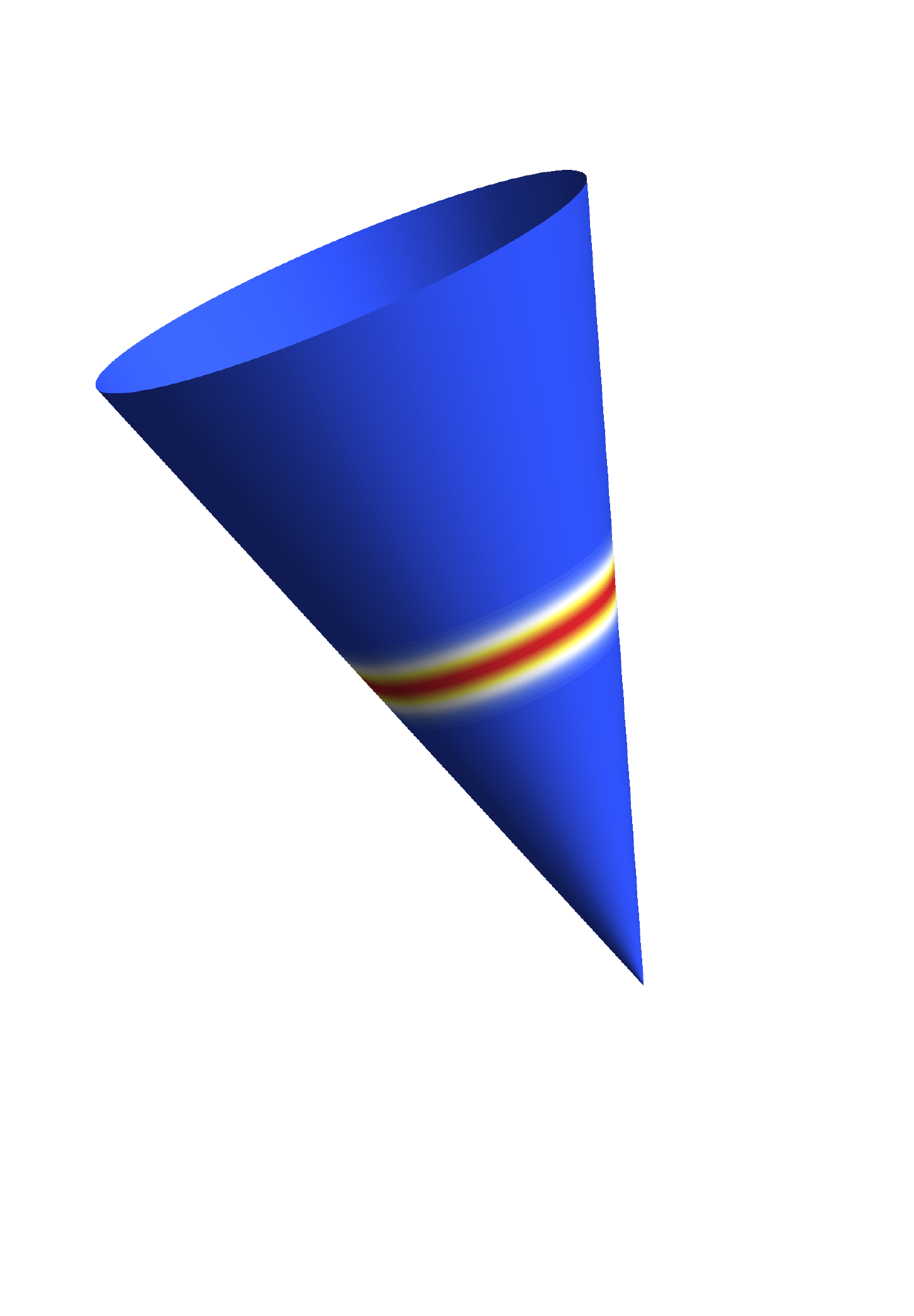}
  %\captionof{figure}{Plot of probability distribution $\nu(r,\theta)$ with \\ $\alpha=15$ on the cone.}
  \label{fig:non-unique_dist_7}
\end{minipage}
\caption{Plot of the probability distribution $\nu(r,\theta)$ in (\ref{eq:simple.nu}), with $\alpha=15$.  The distribution peaks in the red 
 region where $\nu$ is large, and is small in the blue 
 region. The left hand side shows the plot on $\R^2$, and the right hand side shows that plot on the cone.}
\label{fig-3}
\end{figure}

\begin{figure}[h]
\centering
\begin{minipage}{.5\textwidth}
  \centering
  \includegraphics[width=.8\linewidth]{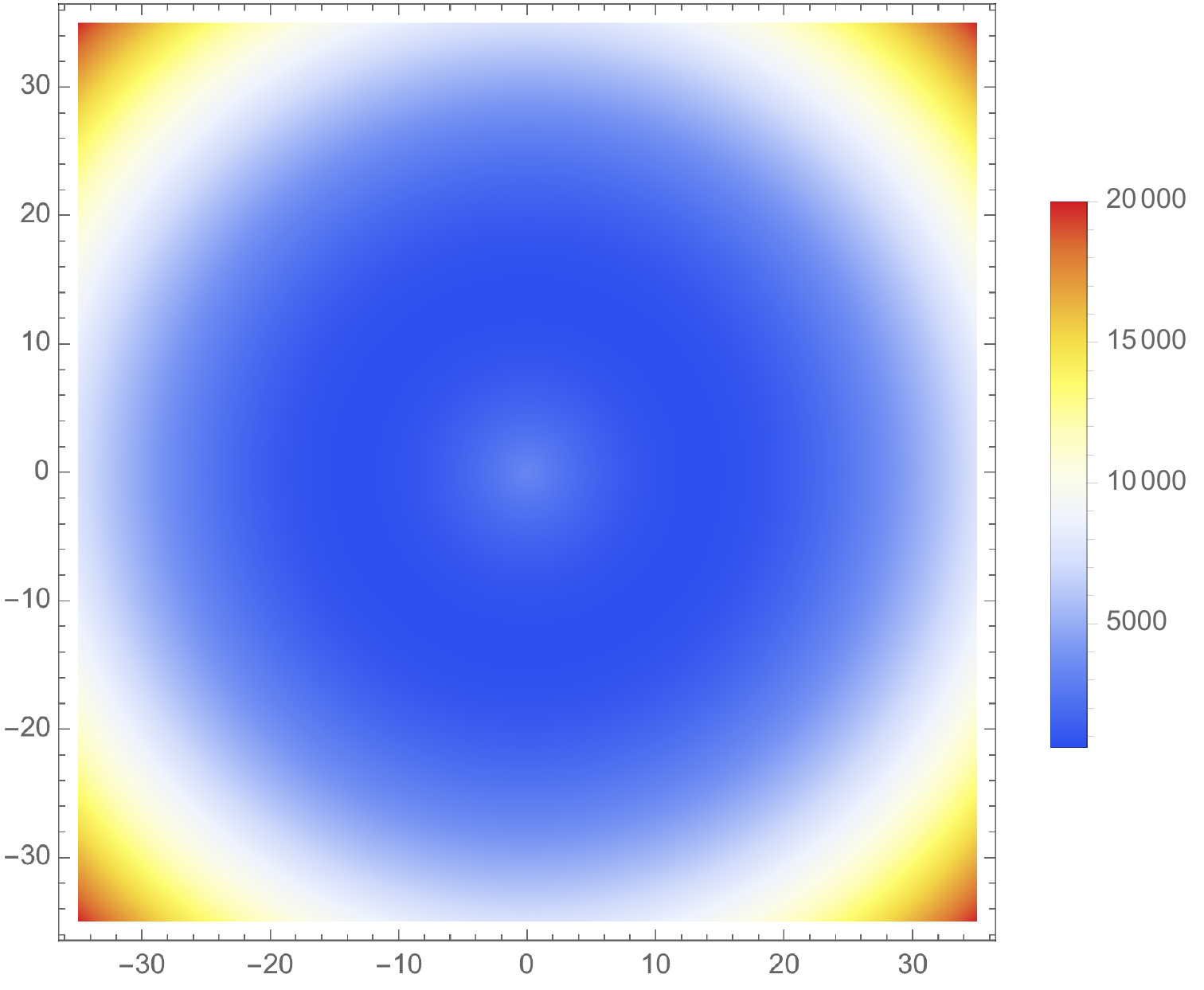}
 % \caption{Plot of Fr\'{e}chet function $f(x)$ with  $\alpha=15$ on $\mathbb{R}^{2}$.}
  \label{fig:non-unique_dist_8}
\end{minipage}%
\begin{minipage}{.5\textwidth}
  \centering
  \includegraphics[width=.8\linewidth]{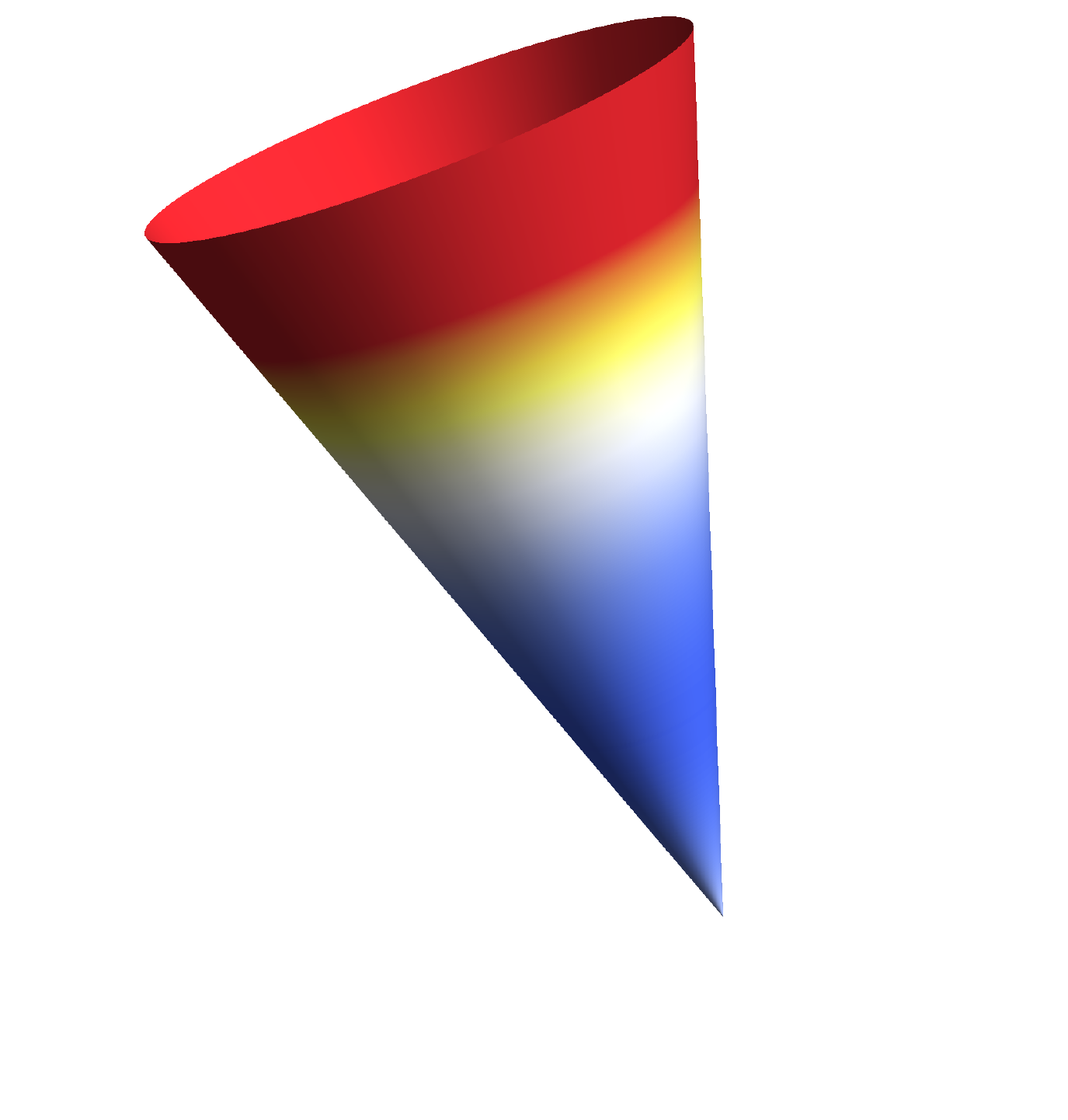}
 % \caption{Plot of Fr\'{e}chet function $f(x)$ with $\alpha=15$ on the cone.}
  \label{fig:non-unique_dist_9}
\end{minipage}
\caption{Plot of the Fr\'{e}chet function $f(x)$ with $\alpha=15$ on $\R^2$ and on the cone. The Fr\'echet mean occurs in the ``most blue" region, on a circle of radius approximately 13.53.}
\label{fig-4}
\end{figure}
%Since there is no angle dependence, $f(x)$ is simply a quadratic function of the radial direction $r$. The % unique 
%minimum occurs at 
%$$r_{0}=\frac{\sqrt{2} \left(2 \alpha +\sqrt{\pi } e^{\alpha ^2} \left(2 \alpha ^2+1\right) (\text{erf}(\alpha )+1)\right)}{\pi ^{3/2} e^{\alpha ^2} \alpha  (\text{erf}(\alpha )+1)+\pi },$$
%with $\theta$ arbitrary.  When $\alpha$ is large, $r_{0} \approx \frac{2\sqrt{2}}{\pi}\alpha$. For $\alpha=15$, $r_{0} \approx 13.5348$. This shows that $\nu$ has uncountably infinitely many Fr\'{e}chet means. As confirmed by Figure 4,
 % \ref{fig:non-unique_dist_9} and its 2-$d$ projection in Figure \ref{fig:non-unique_dist_8},  
% the Fr\'echet function does not  %seem to 
% reach a minimum at a unique location.
 \end{example}
 
%  Note that the space $\calU_d$ is a stratified space, technically not a manifold. Therefore, the standard tools developed for differentiable manifolds such as in \cite{} can not be trivially applied  or extended here.
%
%[LL/SR/JW:  Why asymptotic behavior won't be trivial.  (Could use a torus example here.  Can we do so, with just one example (i.e., not all 4 in the appendix of the dropbox file), without having first introduced fundamental domains?  If we do need FD's introduced before, can it happen naturally or is it kind of forced? )]

\subsection{A strong law of large numbers}
%The above discussion suggests that in general we can expect that, when it does converge, the empirical Fr{\'e}chet mean $\mu_n$ will converge to a set.  

Before establishing the limiting distribution for $\mu_n$, a natural first step is to explore the consistency properties of  $\mu_n$.%\footnotetext{The red region appears as white and the blue region as gray in a black and white reproduction.}
Drawing on Theorem 3.3 in \cite{rabibook} for general metric spaces, we have the following result.

 \begin{theorem}
 \label{th-consistency}
 Let $Q$ be a distribution on $\calU_d$, let $C_{Q}$ be the set of means of $Q$ with respect to the Procrustean distance $d_P$, and let $C_{Q_n}$ be the set of empirical means with respect to a sample of unlabeled networks $X_1,\ldots, X_n$. Assume that the Fr\'echet function is everywhere finite. Then the following holds:
 (a) the Fr\'echet mean set $C_Q$ is nonempty and compact; (b) for any $\epsilon>0$, there exists a positive integer-valued random variable $N\equiv N( \epsilon)$ and a $Q$-null set $\Omega(\epsilon)$ such that 
\begin{equation}
 C_{Q_n} \subseteq C_Q^\epsilon \doteq \{ p \in \calU_d : d_P(p, C_Q) < \epsilon \} \; \forall\ n \ge N
\end{equation}
outside of $\Omega(\epsilon)$;
(c) if $C_{Q}$ is a singleton, i.e., the Fr\'echet mean $\mu$ is unique, then  $\mu_n$ converges to $\mu$ almost surely. 
 \end{theorem}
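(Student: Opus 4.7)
My plan is to reduce the three claims to Theorem 3.3 of \cite{rabibook} (Bhattacharya--Patrangenaru), which establishes exactly this consistency profile for Fr\'echet means on suitably regular metric spaces. The two ingredients I must supply before invoking that theorem are (i) that $(\calU_d, d_P)$ is a ``nice'' metric space --- Polish and with the Heine--Borel property that closed bounded sets are compact --- and (ii) that the Fr\'echet function $f$ is continuous and coercive.

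First I would verify that $d_P$ is genuinely a metric on $\calU_d$, not merely a pseudometric. Since $\Sigma_d$ is finite, every orbit $[\vec x]$ is a closed finite subset of the closed octant $\calO_D\subset \R^D$; the infimum in $d_P([\vec x],[\vec y])=\min_{\sigma_1,\sigma_2}d_E(\sigma_1\cdot\vec x,\sigma_2\cdot\vec y)$ is therefore attained, and vanishes iff $[\vec x]=[\vec y]$. The quotient map $q:\calO_D\to\calU_d$ is a proper, open, continuous surjection (again because $\Sigma_d$ is a finite group acting by Euclidean isometries), so completeness, separability, local compactness, and the Heine--Borel property pass from $(\calO_D,d_E)$ to $(\calU_d,d_P)$: each closed $d_P$-ball is the $q$-image of the intersection of $\calO_D$ with a finite union of closed Euclidean balls, and is therefore compact.

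Next, for part (a), I would establish continuity and coerciveness of $f$. Continuity follows from the elementary bound $|d_P(p,z)^2 - d_P(p',z)^2|\le d_P(p,p')\bigl(d_P(p,z)+d_P(p',z)\bigr)$ combined with $d_P(p',z)\le d_P(p,p')+d_P(p,z)$, which for $p'$ in a bounded neighborhood of $p$ produces a dominating integrand lying in $L^1(Q)$ (by the standing finiteness of $f$); dominated convergence then yields $f(p')\to f(p)$. For coerciveness, fix $p_0\in\calU_d$; the inequality $d_P(p,z)^2\ge \tfrac12 d_P(p,p_0)^2 - d_P(p_0,z)^2$ integrates to $f(p)\ge \tfrac12 d_P(p,p_0)^2 - f(p_0)\to\infty$ as $d_P(p,p_0)\to\infty$. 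Combined with Heine--Borel, this forces $C_Q$ to be nonempty, closed, bounded, and hence compact, proving (a). For (b), I would then follow the classical Ziezold strategy in the form packaged by Bhattacharya--Patrangenaru: the SLLN gives pointwise $f_n(p)\to f(p)$ almost surely for each $p$, and a standard compactness argument --- using continuity of $f$, coerciveness of $f$ and $f_n$ (uniform in $n$ on a set of full probability, since empirical means of $d_P(p_0,\cdot)^2$ converge to $f(p_0)$), and the Heine--Borel property --- upgrades this to almost sure uniform convergence on each closed $d_P$-ball and confines all empirical minimizers to a common compact set. Any subsequential limit $p_\infty$ of points $p_n\in C_{Q_n}\setminus C_Q^\epsilon$ would satisfy $f(p_\infty)\le f(q)$ for every $q\in C_Q$, contradicting $p_\infty\notin C_Q$. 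Part (c) is then immediate: if $C_Q=\{\mu\}$, then $C_Q^\epsilon$ is the open $d_P$-ball of radius $\epsilon$ around $\mu$, so (b) reads as $\mu_n\to\mu$ a.s.

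I expect the most delicate step to be the Heine--Borel / local compactness transfer across the quotient, carried out in the second paragraph: every subsequent compactness extraction --- the existence of minimizers in (a) and the limit points in (b) --- rests on being able to pass from boundedness in $d_P$ to precompactness in $\calU_d$. The fact that $\Sigma_d$ is finite is essential here, since it makes $q$ proper and keeps all orbits discrete; once this geometric input is secured, the remaining steps are routine verifications of the hypotheses of the Bhattacharya--Patrangenaru theorem.
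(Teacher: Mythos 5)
Your proposal is correct and follows essentially the same route as the paper: both reduce parts (a) and (b) to Theorem 3.3 of Bhattacharya--Patrangenaru and part (c) to Ziezold, with the only substantive input being that closed bounded subsets of $(\calU_d,d_P)$ are compact. The paper proves that compactness via a fundamental domain $F$ and a largest-entry bound on $q^{-1}(S)\subset F$, whereas you obtain it (arguably more cleanly) by observing that the $q$-preimage of a closed $d_P$-ball is a finite union of closed Euclidean balls intersected with $\calO_D$; your additional verifications of continuity, coerciveness, and the uniform-convergence argument are correct but are exactly the content the paper delegates to the cited theorem.
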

% $\mu_n$ is a strongly consistent estimator of $\mu$, or

 \begin{proof}
 
 We first  prove that every closed and bounded subset of $M=\mathcal U_d$ is compact. 
 
 Let  $F$ be a fundamental domain for the action of $\Sigma_d$ on $\calG_d$, as defined in Section~4, with the associated 
 projection $q:F\to \calU_d$.   This map is continuous and a diffeomorphism on the interior of $F.$  Take a closed and bounded set $S$ in $\mathcal U_d$.  Because $q$ is continuous, $q^{-1}(S) $ is closed. 
 %To see this, take a sequence $x_i \in \pi^{-1}(S$) with $x_i\to x$ in $F$.  Then $d_E(x_i,x)\to 0,$  which implies $d_P(\pi(x_i),\pi(x))\to 0$, as the Procrustean distance is less than the Euclidean distance.  Thus $ \pi(x_i) \to \pi(x)$, which implies $\pi(x)\in S$. Thus $x\in \pi^{-1}(S).$  
 We now show that  $q^{-1}(S)$ is also bounded.   $S$ is contained in a ball centered at some $[\vec z]\in \mathcal U_d$ with radius $r$, so $d_P([\vec z],[\vec x]) < r$ for $[\vx]\in S$.  Now say that the largest entry in $[\vec z]$ (in any ordering of the entries of $\vec z$) is $C.$  If the largest entry in $\vx $ (in any ordering) is 
 greater than $C + r$, then $d_P([\vec z], [\vx]) > (C+ r) - C$, a contradiction.  (This holds because under any permutation $\sigma$ of the entries of $\vx$, one entry in $d_E(\vec z,\sigma  \cdot \vx)$ is greater than $(C+r) - C$.)
 % where $q^{-1}(z)$  and $q^{-1}(x)$ denote any fixed elements in these two sets. 
 Thus for any choice of $\vx\in [\vx]\in S$, $\Vert \vx\Vert \leq \sqrt{D}(C+r).$  
Thus $q^{-1}(S)$ is contained in the ball of radius $\sqrt{D}(C+r)$ centered at the origin, and thus is bounded.

Since $F$ is a closed subset of $\mathbb R^D$, the closed and bounded set $q^{-1}(S)$ is compact.  Since $q$ is continuous, $S $ is compact.

Then by Theorem 3.3 in \cite{rabibook}, (a) and (b) follow. 

Part (c) follows from \cite{ziezold} under the uniqueness of Fr\'echet mean. 
 \end{proof}
 
 \begin{remark}
 For every model 
 \begin{align}
 Q\in  \mathcal Q=\left\{Q: C_Q\; \text{is a singleton with
  a finite Fr\'echet function}\right \},
  \end{align} the sample Fr\'echet mean $\mu_n$ is a strongly consistent estimate of $\mu$ with respect to this model. 
 \end{remark}

\subsection{A central limit theorem}

The goal of this section is to derive a central limit theorem for the empirical Fr\'echet mean, as an important precursor for statistical inference. One of the key challenges is to
establish  geometric conditions on  distributions on $\calU_d$ which ensure the uniqueness of the population Fr\'echet mean.  We discuss and address the uniqueness issue in detail in Section~4.  Here, our central limit theorem  assumes that the uniqueness conditions of Section~4 are met. 

Let $q:\calO_D\to \calU_d$ be the projection from the space of labeled networks to the space of unlabeled networks.

\begin{theorem}\label{clt}
Assume $Q'$ has support on a compact set $K'\subset \calO_D$ defined in 
Theorem \ref{bestestimate}, so that the pushdown measure $Q =q_*Q'$ supported on 
$K = q(K')$ has a unique Fr\'echet mean $\mu$.  Let $\mu_n$ be the empirical Fr\'echet mean of an i.i.d sample $X_1,\ldots, X_n\sim Q$ with respect to the distance \eqref{eq-pro}. Let $\phi=q^{-1}$. Then we have 
\begin{align}
\label{eq-clt}
\sqrt{n}\left( \phi(\mu_n)-\phi(\mu)\right)\xrightarrow{L} N(0,\Sigma),
\end{align}
where $\Sigma=\Lambda^{-1}C\Lambda^{-1}$ with the Hessian matrix
$$\Lambda=\left({\mathbb E}[D_{r,s}\|\phi(\mu)-\phi(X_1)\|]
 \right)_{r,s=1,\ldots, D},$$
  and $C$ is the covariance matrix of $\{D_r \|\phi(\mu)-\phi(X_1)\|\}_{r=1,\ldots, D}$. 
  \end{theorem}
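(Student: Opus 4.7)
The strategy is to reduce the asymptotic analysis to a classical multivariate $M$-estimator CLT in Euclidean space, exploiting the fact that on the compact set $K'$ furnished by Theorem~\ref{bestestimate}, the Procrustean distance $d_P$ collapses to the Euclidean norm and the sample Fr\'echet problem becomes a smooth Euclidean least-squares problem.

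First I would \emph{localize}. By construction, $K'$ may be taken to lie in the interior of a fundamental domain $F$ with $\phi(\mu) \in K'$, and by shrinking if necessary one can find an open neighborhood $U \supset K'$ inside $F$ and a constant $\delta>0$ such that for every $x \in U$ and every $\vec y \in K'$ the identity permutation is the unique minimizer in \eqref{eq-pro}; that is, $d_P(q(x), q(\vec y)) = \|x - \vec y\|$. This is the key geometric input, and it is exactly what Theorem~\ref{bestestimate} is designed to supply. By Theorem~\ref{th-consistency}(c), $\mu_n \to \mu$ almost surely, so eventually $\phi(\mu_n) \in U$, and on this event the sample Fr\'echet function agrees with the smooth Euclidean objective
\[
F_n(x) \;:=\; \frac{1}{n}\sum_{i=1}^n \|x - \phi(X_i)\|^2, \qquad x \in U.
\]

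Next I would apply the standard $M$-estimator expansion. Since $F_n$ is a quadratic polynomial in $x$, the first-order condition $\nabla F_n(\phi(\mu_n)) = 0$ holds eventually, and a Taylor expansion around $\phi(\mu)$ yields
\[
0 \;=\; \nabla F_n(\phi(\mu)) \;+\; H_n\bigl(\phi(\mu_n) - \phi(\mu)\bigr),
\]
where $H_n$ is the Hessian of $F_n$ evaluated at some intermediate point on the segment from $\phi(\mu)$ to $\phi(\mu_n)$. Since $K$ is compact, $\nabla F_n(\phi(\mu))$ is an average of i.i.d.\ uniformly bounded random vectors with mean zero (because $\phi(\mu)$ minimizes the population Fr\'echet function), so the multivariate CLT gives $\sqrt{n}\,\nabla F_n(\phi(\mu)) \xrightarrow{L} N(0, C)$. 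A uniform law of large numbers on the compact neighborhood, combined with the consistency of $\mu_n$, yields $H_n \to \Lambda$ almost surely. Solving for $\phi(\mu_n) - \phi(\mu)$ and invoking Slutsky's lemma delivers \eqref{eq-clt} with $\Sigma = \Lambda^{-1} C \Lambda^{-1}$.

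The main obstacle is the first step -- justifying that the Procrustean distance really does reduce to the Euclidean one on the relevant product $U \times K'$, and that the empirical minimizer $\mu_n$ eventually lifts into $U$ using a \emph{consistent} choice of representative for each $X_i$. This requires (i) an a priori separation between $K'$ and all nontrivial $\Sigma_d$-translates of $K'$, a property that must be read off from the construction in Theorem~\ref{bestestimate}, and (ii) coherent selection of the lifts $\phi(X_i)$ so that $F_n$ is actually smooth on the whole of $U$ rather than merely piecewise smooth. Once this geometric reduction is secured, the remainder is a textbook consequence of Euclidean $M$-estimator asymptotics.
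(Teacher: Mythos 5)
Your proposal is correct and follows essentially the same route as the paper: localize via the almost-sure consistency of $\mu_n$, use the isometry $d_P=d_E$ on the cone $F_{a/4}$ supplied by Theorem~\ref{bestestimate} to identify the Fr\'echet objective with a smooth Euclidean one, and then conclude with a Euclidean CLT. The only difference is that you unpack the standard $M$-estimator Taylor expansion explicitly, whereas the paper delegates that final step to Theorem 2.2 of \cite{linclt} after verifying its differentiability and Hessian conditions (A5)--(A6).
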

  
Here  $D_r$ denotes the partial derivative with respect to the $r^{\rm th}$ direction, $D_{r,s}$ denotes second partial derivatives, and $\xrightarrow{L}$ means convergence in law or distribution.

\begin{proof}
Since $\mu_n$ converges to $\mu$ almost surely under our support condition for $Q'$, one can find a small open neighborhood $U$ of $\mu$ inside $K$ such as that $P(\mu_n\in U)\rightarrow 1$. Let $S=q^{-1}(U)$ which is an open subset of $\mathbb R^D$.  Note that $\phi: U\rightarrow S$ is a homeomorphism. By Theorem
\ref{bestestimate}, the projection map $q$ is a Euclidean isometry. Therefore, for any 
vectorized network $\vec x \in S$ and $[\vec z]\in U$,
% which denotes the vectorized representation of  network $x$, 
one has
\begin{align*}
d_P^2([\vx], [\vec z]) = d_P^2(\phi^{-1}(\vec x), [\vec z])  = \|\vec x-\vec z\|^2,
\end{align*}
where $\vec z = \phi([\vec z]).$ 
Thus $d_P^2(\phi^{-1}(\vec x), [\vec z])$ is twice differentiable in $\vec x$ for any $[\vec z]\in U$. Tracing through the definition of the smooth structure on $U$ induced from the standard structure on $\R^D$, we see that $d_P^2([\vec x], [\vec z])$ is twice differentiable in $[\vec x].$
 %Also note that by the consistency of $\mu_n$ (see Theorem \ref{th-consistency}), one has $P(\mu_n\in U)\rightarrow 1$ as $n\rightarrow \infty$. 
One can also verify the conditions (A5) and (A6) on the Hessian matrices of Theorem 2.2 in \cite{linclt}, and our Theorem follows. 
\end{proof}

As an immediate consequence of this central limit theorem, we can
define natural analogues of classical hypothesis tests.  For example, consider the construction of a statistical test for two or more independent samples using the same framework. Assume that we have $k$ independent sets of networks on $d$ vertices, and consider the problem of testing whether or not these sets have in fact been drawn from the same population. Formally, we have independent samples $X_{i j} \sim Q_j$, for $i=1,\ldots, n_j$ and $j=1,\ldots, k$.  Each of these $k$ populations has an unknown mean,
denoted $\mu^{(j)}$.  Then, as a direct corollary to Theorem \ref{clt}, we have the following asymptotic result. 
\begin{corollary}\label{cor:k-sample}
Assume that the distributions $Q_1,\ldots, Q_k$ satisfy the conditions of Theorem~\ref{clt}.  Moreover, also assume that $n_{j}/n\to p_{j}$ for every sample, with $n:=\sum_{j}n_{j}$, and $0<p_{j}<1$. Then, under $H_{0}:\phi(\mu^{(1)})=\ldots=\phi(\mu^{(k)})$, we have
   \begin{equation}\notag
     T_{k}:=\sum_{j=1}^{k} n_{j}
    \left(\phi(\mu_{j,n_j})-\phi(\mu_n)\right)'
     \widehat{\Xi}^{-1}\left(\phi(\mu_{j,n_j})-\phi(\mu_n)\right) 
     \longrightarrow \chi^{2}_{(k-1)D},
   \end{equation}
   where $\mu_{j,n_j}$ denotes the empirical mean of the $j^{th}$ sample, 
   $\mu_n$ represents the grand empirical mean of the full sample,
   and $\widehat{\Xi} := \sum_{j=1}^{k} \widehat{\Xi}_{j}/n_{j}$ is a
   pooled estimate of covariance, with the $\widehat{\Xi}_{j}$'s denoting
   the individual  covariance matrices estimates of each subsample. 
\end{corollary}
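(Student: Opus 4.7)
The plan is to reduce the statement to the standard Wald-type $k$-sample chi-squared identity in Euclidean space, carried out entirely inside the chart furnished by $\phi$ around the common Fr\'echet mean. Under $H_0$, write $\phi_0 := \phi(\mu^{(1)}) = \cdots = \phi(\mu^{(k)})$. Because each $Q_j$ satisfies the hypotheses of Theorem~\ref{clt}, Theorem~\ref{th-consistency}(c) applied to each subsample gives $\mu_{j,n_j} \to \mu^{(j)}$ almost surely; applied to the pooled empirical measure it gives $\mu_n \to \phi^{-1}(\phi_0)$ as well. Consequently, with probability tending to one, all of $\mu_{j,n_j}$ and $\mu_n$ lie in the open neighborhood $U$ of the common mean identified in the proof of Theorem~\ref{clt}, on which $\phi$ is a homeomorphism onto an open subset of $\R^D$ and the Procrustean distance coincides with the Euclidean distance.

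Next, I would apply Theorem~\ref{clt} separately to each of the $k$ independent subsamples:
\begin{equation*}
\sqrt{n_j}\,\bigl(\phi(\mu_{j,n_j}) - \phi_0\bigr) \xrightarrow{L} N(0, \Sigma_j), \qquad j=1,\ldots,k,
\end{equation*}
with $\Sigma_j$ the covariance prescribed by Theorem~\ref{clt} for $Q_j$. Independence of samples together with $n_j/n \to p_j$ yields joint convergence of the block vector $\bigl(\sqrt{n}(\phi(\mu_{j,n_j}) - \phi_0)\bigr)_{j=1}^{k}$ to $(V_1,\ldots,V_k)$ with independent components $V_j \sim N(0, \Sigma_j/p_j)$. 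In the chart $U$, the pooled Fr\'echet objective is, by the local isometry property established in the proof of Theorem~\ref{clt}, a plain sum of squared Euclidean distances, so its minimizer $\phi(\mu_n)$ equals the ordinary weighted arithmetic mean $\sum_j (n_j/n)\,\phi(\mu_{j,n_j})$ up to an $o_P(n^{-1/2})$ term controlled by the eventual containment of all relevant points in $U$. Hence $\sqrt{n}\bigl(\phi(\mu_{j,n_j}) - \phi(\mu_n)\bigr)$ is asymptotically the linear contrast $V_j - \sum_\ell p_\ell V_\ell$.

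The pooled estimator $\widehat{\Xi}$ is consistent for the appropriate limiting covariance: each $\widehat{\Xi}_j$ is built from sample Hessians and sample gradients of a smooth Fr\'echet objective on $U$, both of which satisfy a uniform law of large numbers there. The classical Cochran identity --- the sum of squared Mahalanobis norms of independent Gaussians taken about their weighted mean is $\chi^2_{(k-1)D}$ when the degrees of freedom are pooled correctly --- combined with Slutsky's theorem then yields $T_k \xrightarrow{L} \chi^2_{(k-1)D}$.

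The principal obstacle is the Euclidean-chart identification of $\phi(\mu_n)$ with the weighted arithmetic mean of the subsample chart-means used in the second paragraph. The data points $X_{ij}$ themselves need not lie in $U$, so one must identify, for each data point, its Procrustean-optimal representative relative to $\mu_n$ and argue that this matches the representative relative to $\mu^{(j)}$ eventually. This stability of the optimal-permutation selection near $\phi_0$ is precisely the content of the fundamental-domain/uniqueness machinery developed in Section~\ref{sec-uniqueness} and invoked in the proof of Theorem~\ref{clt}; a careful treatment of the present step parallels that proof, with the pooled empirical distribution replacing a single $Q$.
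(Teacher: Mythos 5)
The paper gives no separate proof of this corollary---it is asserted as ``a direct corollary to Theorem~\ref{clt}''---and your argument (per-sample CLT in the chart $\phi$, joint convergence by independence of the subsamples, identification of $\phi(\mu_n)$ with the weighted average of the $\phi(\mu_{j,n_j})$, then the standard Wald/Cochran quadratic-form limit combined with Slutsky and consistency of $\widehat{\Xi}$) is exactly the route the paper intends. One remark: the ``principal obstacle'' you flag in your last paragraph is already dissolved by the hypotheses of Theorem~\ref{clt}, since every data point lies in $q(K')$ with $K'\subset F_{a/4}$ compact convex, where Theorem~\ref{bestestimate} makes $q$ a genuine isometry between Euclidean and Procrustean distances; hence the optimal-permutation selection is stable for \emph{all} data points and $\phi(\mu_n)$ equals the weighted arithmetic mean $\sum_j (n_j/n)\,\phi(\mu_{j,n_j})$ exactly, not merely up to $o_P(n^{-1/2})$.
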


As previously noted, this central limit theorem and such corollaries hold only if the  population 
Fr\'echet mean(s) 
is unique. This depends crucially on  the nontrivial geometry of the space of unlabeled networks. The following section deals exclusively with this issue. 

%[LL/SR/EK: Some preamble about how for a CLT we need a unique mean.  This requires stronger conditions.  We have the following result.]
%
%[LL/SR/EK:  Statement of CLT.]
%
%[LL/SR/EK: Sketch of proof.

%
%  The idea would be to sketch what is needed conditional on the assumptions necessary being met.  Key to these assumptions is the stuff that Steve worked out for fundamental domains.  And details on that will be covered in the next section.  So we just need a sketch here, linked appropriately to the next section.]

\section{Geometric requirements for uniqueness of the Fr\'echet mean}
\label{sec-uniqueness}

Underlying the central limit theorem in Theorem~\ref{clt}, the basic question is: which compact subsets $K$ of $\calU_d$ have a unique Fr\'echet mean?
We have seen in Section~\ref{eq-sec2} that $\calU_d$ may be difficult to work with, while a fundamental domain $F\subset \calO_D$ for the action of $\Sigma_d$ on the space of labeled networks $\calO_D$ seems more tractable.  Indeed, finding the 
Fr\'echet mean for a distribution supported in $F$ is a standard center of mass calculation in Euclidean space.  However, it is not clear that this Fr\'echet mean  in $F$ projects to the 
Fr\'echet mean  in the quotient space $\calU_d = \calO_D/\Sigma_d$, because the metric used to compute Fr\'echet means in $\ud$ is the Procrustean distance, which may or may not equal the Euclidean distance.  

In \S4.1,  we find a fundamental domain $F$ by a standard procedure (Lemma \ref{fd}), and find compact subsets $K'\subset F$ for which the Fr\'echet mean in $\calO_d$ is guaranteed to project to the 
Fr\'echet mean of $K\subset \calU_d$, where $K = q(K')$ is the projection of $K'$ under the quotient/projection map $q: \calO_D\to \calU_d = \calO_D/\Sigma_d$. This is the content of the main result in this section (Theorem \ref{bestestimate}). We also show that this result in our particular setting is an improvement of the best result for general Riemannian manifolds due to Afsari \cite{afsari11} (see Figure 6).  In \S4.2, we generalize Theorem \ref{bestestimate} in two different directions: Theorem \ref{main theorem} allows distributions with small tails outside the compact set $K$ above, and Theorem \ref{embedding} shows that a unique Fr\'echet mean can be found for any compact set $K$ after first embedding $K$ isometrically into a larger Banach space.

\subsection{The main result on the uniqueness of the Fr\'echet mean}

We now discuss the construction of a fundamental domain $F$.  By Definition \ref{2.2}, a fundamental domain is characterized by: 
 (i) every weight vector $x$ 
  can be permuted by some $\sigma\in\Sigma_d$ to a network $\sigma\cdot x$ in $F$, (ii) if $\vec w\in \calO_D$ has $\vec w\in F\cap \sigma F$ for $\sigma\in \Sigma_d, \sigma \neq {\rm Id}$, then $\vec w\in \partial F.$  
  %a network can be permuted into two networks in $F$ implies the two networks are in the boundary $\partial F$ of $F$.
 (As a technical note, we always consider $\partial F$ with respect to the induced topology on $\calO_D$ from the standard topology on $\R^D.$ ) Once $F$ has been constructed, we are guaranteed that the projection map $q:\calO_D\to \calU_d, \;q(\ve) = [ \ve]$ restricts to a surjective map $q:F\to \calU_d$ which is a homeomorphism from the interior $F^o$ of $F$ to 
 $q(F^o).$  (In fact, $q$ is a diffeomorphism on this region by definition of the smooth structure on
 $q(F^o).$)
  
  \medskip
  It is convenient to center a choice of fundamental domain on a weight vector 
  with trivial stabilizer. 
  
  \begin{definition}  A vector $\vec w = (w_1,\ldots, w_D)\in \calO_D$ is {\em distinct} if it has 
  trivial stabilizer for the action of $\Sigma_d$: {\em i.e.}, 
%  is {\it distinct} if $\ell_i\neq \ell_j$ for $i\neq j$.
% and $\ell_i\neq 0$ for all $i$.  
if $\sigma\cdot \ve \neq \ve$ for all $\sigma\in\Sigma_d$, $\sigma \neq {\rm id}.$
%A vector which is not distinct is called {\it nondistinct}.
\end{definition}

A vector with trivial stabilizer is also called a vector with trivial automorphism group.  Weight vectors with all different entries  are distinct, which implies that the distinct vectors are dense in $\calO_D.$

%As an example, $G_1$ and $G_2$ below have nondistinct length vectors $\ve^1, \ve^2$. 
For example, consider the two graphs $G_1$ and $G_2$ in Figure~\ref{fig:distinct}.  Both share the same connectivity pattern (i.e., are isomorphic), but have different weight vectors.  The weight vector $\ve^1= (20,20,7,\ldots)$ of $G_1$ satisfies 
$\sigma\cdot\ve^1 = \ve^1$ for $\sigma = (23)\in\Sigma_7$.
In contrast, for all $\sigma\in\Sigma_7\setminus \{{\rm id}\}$,
$\sigma\cdot\ve^2\neq \ve^2,$
where $\ve^2$ is the weight vector of $G_2$,
because the two 20's belong to nodes with different valences.   
Thus  even though $\ve^1, \ve^2$ have the same set of weights, 
$\ve^2$ is distinct, while $\ve^1$ is not.

\begin{figure}
\centering
$$
\begin{tikzpicture}
\node[left] at (0,.5) {{\bf $G_1$}};
 \draw (2,1) -- (5,1);
 \fill (2,1) circle (2pt);
  \fill (5,1) circle (2pt);
 \node[above] at (2,1) {$1$};
 \node[left,red] at  (1.4,.5) {$20$};
  \node[above] at (5,1) {$4$};
   \node[above,red] at  (3.5,1) {$7$};
  \draw (2,1) -- (1,0);
  \fill (1,0) circle (2pt);
\node[below] at (1,0) {$2$};
 \draw (2,1) -- (3,0);
 \fill (3,0) circle (2pt);
\node[below] at (3,0) {$3$};
 \node[right,red] at  (2.6,.5) {$20$};
\draw (5,1) -- (4,0);
\fill (4,0) circle (2pt);
\node[below] at (4,0) {$5$};
 \node[left,red] at  (4.4,.5) {$8$};
\draw (5,1) -- (5,0);
\fill (5,0) circle (2pt);
\node[below] at (5,0) {$6$};
 \node[left,red] at  (5,.5) {$9$};
\draw (5,1) -- (6,0);
\fill (6,0) circle (2pt);
\node[below] at (6,0) {$7$};
 \node[right,red] at  (5.5,.5) {$10$};
\end{tikzpicture}  
$$

$$
\begin{tikzpicture}
\node[left] at (0,.5) {{\bf $G_2$}};
 \draw (2,1) -- (5,1);
 \fill (2,1) circle (2pt);
  \fill (5,1) circle (2pt);
 \node[above] at (2,1) {$1$};
 \node[left,red] at  (1.4,.5) {$20$};
  \node[above] at (5,1) {$4$};
   \node[above,red] at  (3.5,1) {$7$};
  \draw (2,1) -- (1,0);
  \fill (1,0) circle (2pt);
\node[below] at (1,0) {$2$};
 \draw (2,1) -- (3,0);
 \fill (3,0) circle (2pt);
\node[below] at (3,0) {$3$};
 \node[right,red] at  (2.6,.5) {$10$};
\draw (5,1) -- (4,0);
\fill (4,0) circle (2pt);
\node[below] at (4,0) {$5$};
 \node[left,red] at  (4.4,.5) {$8$};
\draw (5,1) -- (5,0);
\fill (5,0) circle (2pt);
\node[below] at (5,0) {$6$};
 \node[left,red] at  (5,.5) {$9$};
\draw (5,1) -- (6,0);
\fill (6,0) circle (2pt);
\node[below] at (6,0) {$7$};
 \node[right,red] at  (5.5,.5) {$20$};
\end{tikzpicture}  
$$
\caption{Two networks of identical connectivity with ($G_1$) and without ($G_2$) distinct weight vectors. \label{fig:distinct}}
\end{figure}
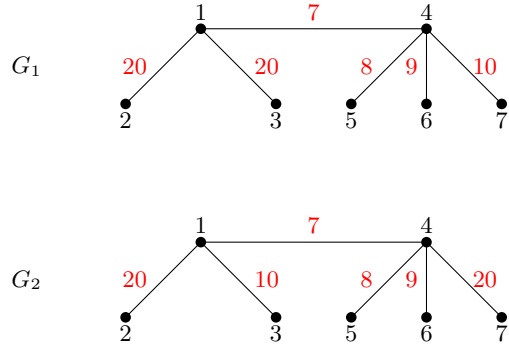

We now explain a standard procedure to construct a fundamental domain as one region in the Voronoi diagram of the orbit of a distinct vector. (Geometers call this a Dirichlet domain.)
Let $d_P$ be the Procrustean distance on $\un.$  From now on, we just write $  \vec w$ instead of
$[  \vec w]$ for elements of $\ud.$

\begin{lemma}\label{fd}  Fix a  distinct
vector $  \vec w\in \calO_D$.  Set 
\begin{align*}F &= F_{\vec w} = \{\vec w'\in \calO_D: d_E(\vec w,\vec w') \leq d_E(\vec w,\sigma\cdot \vec w'),
\ \forall\sigma\in\Sigma_d\} \\
&= \{\vec  w'\in \calO_D: d_E(\vec w,\vec w') = d_P
(\vec w, \vec w')\}.
\end{align*}
Then 

(i) $F$ is a fundamental domain for the action of $\Sigma_d$ on $\calG_d$.

(ii) $F$ is a solid cone with polyhedral cross section. 
\end{lemma}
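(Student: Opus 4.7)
The plan is to first rewrite the defining inequalities of $F$ in linear form, which will simultaneously settle (ii) and set up the machinery for (i). Since every $\sigma\in\Sigma_d$ is an isometry of $\R^D$, $\|\sigma\cdot \vec y\|=\|\vec y\|$, so expanding
\begin{equation*}
d_E(\vec w,\vec y)^2 \leq d_E(\vec w,\sigma\cdot \vec y)^2
\end{equation*}
reduces to $\langle \vec w-\sigma^{-1}\vec w,\vec y\rangle\geq 0$. Thus
\begin{equation*}
F = \calO_D \cap \bigcap_{\sigma\in\Sigma_d}\{\vec y\in\R^D:\langle \vec w-\sigma^{-1}\vec w,\vec y\rangle\geq 0\}.
\end{equation*}
Because $\vec w$ is distinct, the normals $\vec w-\sigma^{-1}\vec w$ are nonzero for $\sigma\neq{\rm id}$, so each half-space is a genuine half-space through the origin. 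Since $\calO_D$ is itself a polyhedral cone, $F$ is an intersection of finitely many half-spaces through the origin, i.e., a polyhedral cone. Intersecting with the hyperplane $\sum_i y^i=c$ for any $c>0$ gives a bounded polyhedral cross section (bounded because the cross section sits inside the simplex $\calO_D\cap\{\sum y^i=c\}$). This proves (ii).

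For (i), I verify the two conditions of Definition~2.2. For the covering property: given $\vec x\in\calO_D$, the orbit $\{\sigma\cdot \vec x:\sigma\in\Sigma_d\}$ is finite, so some $\sigma^{*}\in\Sigma_d$ attains $\min_{\sigma}d_E(\vec w,\sigma\cdot \vec x)$. Setting $\vec x^{*}=\sigma^{*}\cdot \vec x$ and noting that $\tau\sigma^{*}$ ranges over $\Sigma_d$ as $\tau$ does, minimality gives $d_E(\vec w,\vec x^{*})\leq d_E(\vec w,\tau\cdot \vec x^{*})$ for every $\tau$, so $\vec x^{*}\in F$. Hence $\calO_D=\bigcup_{\sigma}\sigma\cdot F$.

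For the boundary condition: suppose $\vec y\in F\cap \sigma\cdot F$ for some $\sigma\neq{\rm id}$; write $\vec y=\sigma\cdot\vec z$ with $\vec z\in F$. Then $\vec y\in F$ gives $d_E(\vec w,\vec y)\leq d_E(\vec w,\sigma^{-1}\vec y)=d_E(\vec w,\vec z)$, while $\vec z\in F$ gives $d_E(\vec w,\vec z)\leq d_E(\vec w,\sigma\vec z)=d_E(\vec w,\vec y)$. So equality holds in the defining inequality for $\sigma^{-1}$, i.e., $\langle \vec w-\sigma\vec w,\vec y\rangle=0$. This hyperplane is a supporting hyperplane of $F$ containing $\vec y$, so $\vec y$ lies on the corresponding facet, hence on the topological boundary $\partial F$ relative to $\calO_D$. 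The analogous statement for $\sigma\cdot F$ (whose facets are just the $\sigma$-images of facets of $F$) follows by applying $\sigma^{-1}$ to $\vec y\in\sigma\cdot F$ and repeating the argument. Therefore $\vec y\in\partial F\cap\partial(\sigma\cdot F)$, completing (i).

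\textbf{Anticipated obstacle.} The one delicate point is to confirm that saturation of the defining inequality $\langle \vec w-\sigma\vec w,\vec y\rangle=0$ really does put $\vec y$ on the topological boundary, rather than in an interior degeneracy. This is where distinctness of $\vec w$ is essential: it guarantees each normal $\vec w-\sigma^{-1}\vec w$ is nonzero, so the corresponding hyperplane is a true facet of the polyhedral cone and locally separates points satisfying strict inequality from those saturating it. Without distinctness (nontrivial stabilizer), some of the defining inequalities would be trivially satisfied as $0\geq 0$ everywhere, and the characterization of the interior of $F$ by strict inequalities would fail.
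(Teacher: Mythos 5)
Your proof is correct and follows essentially the same route as the paper's: the covering property comes from finiteness of $\Sigma_d$, the boundary condition from showing that membership in $F\cap\sigma F$ forces equality in the defining inequality and hence places the point on a hyperplane through the origin (nontrivial precisely because $\vec w$ is distinct), and (ii) from the fact that $F$ is cut out by finitely many such homogeneous half-spaces. Your rewriting $\langle\vec w-\sigma^{-1}\vec w,\vec y\rangle\geq 0$ is just the linear-algebraic form of the paper's ``equidistant from $\vec w$ and $\sigma\cdot\vec w$'' hyperplanes, so the two arguments coincide.
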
 

In the proof,  we use the fact that $\vec w$ is distinct just below (\ref{hyperplane}).

\begin{proof}   (i) First, for fixed $\vec w_1$, a minimum of $d_E(\vec w,\sigma\cdot \vec w_1)$ is attained, since $\Sigma_d$ is finite.  Thus every network (characterized by its weight vector
$\ve_1$) has a permutation in $F$.  

Second, we can rewrite $F$ as
$$F =  \{\vec  w_1\in \calO_D: d_E(\vec w,\vec w_1) \leq d_E(\sigma\cdot\vec w,\vec w_1),
\ \forall\sigma\in\Sigma_d\}.$$
Let $\ve_1\in F\cap\sigma F$.  Then 
$\sigma^{-1}\ve_1\in F$, and
\begin{align*} d_E(\ve,\ve_1) &= \min_\tau d_E(\ve,\tau\ve_1), 
\  d_E(\ve,\sigma^{-1}\ve_1) = \min_\tau d(\ve,\tau\sigma^{-1}\ve_1)\\
& = \min_\tau d(\ve,\tau\ve_1).
\end{align*}
Thus
\begin{equation}\label{hyperplane} d_E(\ve,\ve_1) = d_E(\ve, \sigma^{-1}\ve_1) = d_E(\sigma\ve,\ve_1),
\end{equation}
so $\ve_1$ is equidistant to $\ve$ and $\sigma\ve.$   Since $\ve$ is distinct, $\sigma\ve\neq \ve$ for $\sigma \neq {\rm id}.$ Thus $\ve_1$ lies on a hyperplane in $\calO_D$ defined by (\ref{hyperplane}), and any ball around $\ve'$ contains points that are closer to $\ve$ than to $\sigma\ve$, and points that are farther from $\ve$ than from $\sigma\ve.$  Therefore
 $\ve_1\in\partial F.$ 
 
 (ii) We can construct $F'$ for the action of $\Sigma_d$ on all of $\R^D$ by taking the set of hyperplanes $H_\sigma$ of points equidistant from $\ve$ and $\sigma\ve$ for $\sigma\in\Sigma_d$, and taking the connected component of 
$\calO_D\setminus \cup_\sigma H_\sigma$ containing $\ve.$  Since these hyperplanes all pass through the origin, this component is a solid cone on the origin.  The boundary is given by a union of hyperplanes, so the cross section is a polyhedron. (Not all hyperplanes contribute edges to the cross sectional polygon; see the next example.)  Moreover, $\Sigma_d$ preserves $\calO_D$, so a 
fundamental domain in $\calO_D$ is given by $F= F'\cap\calO_D.$ The boundary planes of $F$ are either boundary planes of $F'$ or (subspaces of) the boundary of $\calO_D.$
\end{proof}

This Dirichlet/Voronoi fundamental domain depends on a choice of $\vec w.$  In particular, for a fixed distinct 
$\vec w_0$, we can guarantee that $\vec w_0$ is in the interior of $F$ by setting $\vec w = \vec w_0$ in the lemma.

$F$ is a solid cone cut out by at most $d!-1+D$ hyperplanes, where $d!-1$ is the order of 
$\Sigma_d\setminus \{{\rm Id}\}$ and $D$ is the number of coordinate hyperplanes.  Thus this construction of $F$
 is not very practical except in low dimensions.  Looking back at Figure 2, the infinite solid cone is 
 the fundamental domain for  the distinct vector $(3,2,1)$.
 See Appendix \ref{Jackson2} for an algorithm that computes the fundamental domain and examples for $d=3,4.$

We now give more information about $F$.
The following result, although interesting, is not used below.

\begin{prop}\label{4.6} Let $F$ be the fundamental domain associated to a distinct vector
 $\vec w\in \calO_D.$  All distinct vectors in $F$ %are- how can this be?
 have a representative in the interior $F^o$ of $F$.   

\end{prop}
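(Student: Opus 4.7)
My plan is to prove that if $\vv\in F$ is distinct, then some orbit representative $\sigma\vv$ lies in $F^o$, by identifying $F^o$ with the locus of vectors having a strict unique nearest element in the orbit of $\ve$.

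\medskip
\emph{Step 1 (Characterization of $F^o$).} From Lemma~\ref{fd}, $F$ is cut out by the weak inequalities $d_E(\ve,\vu)\le d_E(\sigma\cdot\ve,\vu)$ for all $\sigma\in\Sigma_d$, so its interior $F^o$ (in $\calO_D$) is the locus where each such inequality is strict for $\sigma\ne\mathrm{id}$. Since the $\Sigma_d$-action is orthogonal, $d_E(\tau\cdot\ve,\sigma\cdot\vv)=d_E(\sigma^{-1}\tau\cdot\ve,\vv)$, so the condition $\sigma\vv\in F^o$ becomes: $\sigma^{-1}\ve$ is the strict unique minimizer of $\tau\mapsto d_E(\tau\cdot\ve,\vv)$ over $\Sigma_d$. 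The proposition thus reduces to showing that when $\vv$ is distinct, this minimum is attained uniquely.

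\medskip
\emph{Step 2 (The tie equation).} Suppose, toward a contradiction, that $\tau_1\ne\tau_2$ both attain the minimum. Expanding $|\tau_1\ve-\vv|^2=|\tau_2\ve-\vv|^2$ and cancelling $|\tau_i\ve|=|\ve|$ yields
\begin{equation*}
\ve\cdot(\tau_1^{-1}\vv-\tau_2^{-1}\vv)=0,
\end{equation*}
with $\tau_1^{-1}\vv\ne\tau_2^{-1}\vv$ by distinctness of $\vv$. Both $\tau_1^{-1}\vv$ and $\tau_2^{-1}\vv$ jointly maximize $\sigma\mapsto\ve\cdot\sigma\vv$ over $\Sigma_d$, so the full minimizer set $M\subset\Sigma_d$ is invariant under right-multiplication by $\rho:=\tau_2\tau_1^{-1}\ne\mathrm{id}$ and therefore contains the entire cyclic coset $\tau_1\langle\rho\rangle$.

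\medskip
\emph{Step 3 (Extracting a non-trivial stabilizer).} The plan is to apply the tie equation of Step~2 to every consecutive pair in $\tau_1\langle\rho\rangle$, producing $\ve\cdot(\mathrm{Id}-\rho)(\rho^k\tau_1^{-1}\vv)=0$ for each $k$. Summing these telescopically around the cyclic group $\langle\rho\rangle$, and combining with the minimizer inequalities $\ve\cdot\sigma\vv\le\ve\cdot\tau_1^{-1}\vv$ for $\sigma\notin M$, will, via the distinctness of $\ve$ (which ensures that its orbit $\{\tau\cdot\ve\}$ linearly spans $\R^D$ in a sufficiently generic way), collapse the family of scalar relations into the vector identity $\rho\tau_1^{-1}\vv=\tau_1^{-1}\vv$. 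Conjugating gives that $\tau_1\rho\tau_1^{-1}\in\Sigma_d\setminus\{\mathrm{id}\}$ stabilizes $\vv$, contradicting distinctness of $\vv$ and completing the proof.

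\medskip
The main obstacle is Step~3: upgrading the single scalar orthogonality relation of Step~2 into the pointwise vector equation $\rho\tau_1^{-1}\vv=\tau_1^{-1}\vv$. This is surprisingly delicate, since scalar orthogonality to a single vector does not a priori imply a vector-valued fixed-point identity. The argument will need to exploit both the faithful, reducible $\Sigma_d$-representation on the edge space and the combined distinctness of $\vv$ and $\ve$, rather than treating them as arbitrary orthogonal group actions.
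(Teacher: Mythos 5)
Your Step 1 is correct and is the right way to set the problem up: expanding $d_E(\tau\cdot\ve,\vv)^2=|\ve|^2+|\vv|^2-2\,\ve\cdot(\tau^{-1}\vv)$, the condition $\sigma\vv\in F^o$ is precisely that $\sigma^{-1}$ be the \emph{unique} maximizer of $\tau\mapsto\ve\cdot(\tau\vv)$ over $\Sigma_d$, so the proposition reduces to uniqueness of that maximizer for distinct $\ve,\vv$. The problems begin in Step 2: the assertion that the minimizer set $M$ is closed under right multiplication by $\rho=\tau_2\tau_1^{-1}$ does not follow from $\tau_1,\tau_2\in M$ (membership in $M$ is not a coset condition), and Step 3 is only a plan whose essential move --- upgrading the single scalar relation $\ve\cdot(\tau_1^{-1}\vv-\tau_2^{-1}\vv)=0$ to the vector identity $\rho\tau_1^{-1}\vv=\tau_1^{-1}\vv$ --- is never carried out. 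More seriously, no argument can carry it out, because a tie between two maximizers does \emph{not} force a nontrivial stabilizer of $\vv$. Take $d=4$, $D=6$, edges ordered lexicographically as $e_1=\{1,2\},\dots,e_6=\{3,4\}$, and set $\ve=(6,5,4,3,2,1)$, $\vv=(10,8,9,7,6,5)$; both have pairwise different entries, hence trivial stabilizer. The vertex transposition $(34)$ acts on edges by $e_2\leftrightarrow e_3$, $e_4\leftrightarrow e_5$, so it sends $\vv$ to $(10,9,8,6,7,5)\neq\vv$, and
$$\ve\cdot\vv=60+40+36+21+12+5=174=60+45+32+18+14+5=\ve\cdot((34)\cdot\vv).$$
By the rearrangement inequality the maximum of $\ve\cdot(\pi\vv)$ over the \emph{full} symmetric group on the six coordinates is $175$, attained only by the single coordinate transposition $e_2\leftrightarrow e_3$, which is not induced by any vertex permutation (vertex transpositions of $\Sigma_4$ induce products of two disjoint edge transpositions, and the other cycle types are even further from a single $2$-cycle). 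Hence the maximum over the $\Sigma_4$-orbit is $174$, attained at exactly two group elements (an exhaustive check of all $24$ permutations confirms this).

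Consequently $\vv$ is a distinct vector lying in $F_{\ve}$, yet the minimum in your Step 1 is attained twice, the only representatives of $[\vv]$ in $F_{\ve}$ are $\vv$ and $(34)\cdot\vv$, and both lie on $\partial F_{\ve}$; no representative lies in $F_{\ve}^o$. So the contradiction your Steps 2--3 aim for is unreachable, and since your Step 1 reduction is forced by the definitions, the example shows the difficulty is not merely technical: uniqueness of the maximizer genuinely fails once the edge action of $\Sigma_d$ is a proper subgroup of the symmetric group on the $D$ coordinates (i.e.\ for $d\geq 4$; for $d=3$ the strict rearrangement inequality does give uniqueness, since there distinctness is equivalent to having all entries different and the edge action is the full symmetric group on three letters). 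The paper's own argument is in Supplement B, which you should consult directly; but whatever it does, your route cannot be repaired without either restricting to $d=3$, strengthening the hypothesis on $\vv$ beyond distinctness, or reading ``has a representative in $F^o$'' in some way other than the natural one your Step 1 adopts.
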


\begin{proof}   See Supplement B.
\end{proof}

\begin{remark}\label{forward remark}  Any nondistinct vector $\ve'$ has an arbitrarily close distinct vector
$\ve$.   Then $d_P({\ve},{\ve'}) = d_E(\ve, \ve')$, and this remains true for vectors close to $\ve.$  
%Note that while there is a unique element in $\calO_{\ve_1}$ for which the distance equality holds, the permutation $(23)$ fixes $\ve_1$.
Therefore, $\ve'$ is in the interior $F^o$ of $F = F_{\ve}.$ Thus for any nondistinct vector $\ve'$, we can find a fundamental domain that contains $\ve'$ in its interior.
\end{remark}

The next lemma gives a sense of the minimal size of $F$. 
Let $F_c$ denote the solid cone with vertex at the origin, axis $\ve$, and cone angle $c$. Let $a = a_{\ve}$ be the smallest angle between $\ve$ and $\sigma\cdot\ve$ for $\sigma\in\Sigma_d,$
for  $\sigma\neq {\rm id}.$ Of course, 
$a/2$ is the smallest angle between $\ve$ and a hyperplane boundary of $F$.

\begin{lemma}\label{solidcor} $F$ contains the solid cone $F_{a/2}$.
\end{lemma}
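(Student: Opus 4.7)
The plan is to verify directly that every $\vec w' \in F_{a/2}$ satisfies the defining inequality $d_E(\vec w, \vec w') \leq d_E(\sigma \cdot \vec w, \vec w')$ for each $\sigma \in \Sigma_d$. Since permutations act on $\calO_D$ by permuting coordinates, they are Euclidean isometries fixing the origin, so $\|\sigma\cdot\vec w\| = \|\vec w\|$. Expanding both squared distances and cancelling equal terms, the desired inequality is equivalent to
\begin{equation*}
\langle \vec w', \vec w\rangle \;\geq\; \langle \vec w', \sigma\cdot\vec w\rangle.
\end{equation*}

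Writing $\theta_1 = \angle(\vec w', \vec w)$ and $\theta_2 = \angle(\vec w', \sigma\cdot\vec w)$, and using $\|\sigma\cdot\vec w\| = \|\vec w\|$, the inner product inequality reduces to $\cos\theta_1 \geq \cos\theta_2$, i.e.\ $\theta_1 \leq \theta_2$ (both angles lying in $[0,\pi]$). By hypothesis $\vec w' \in F_{a/2}$ means $\theta_1 \leq a/2$, so it suffices to show $\theta_2 \geq a/2$.

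For this I invoke the triangle inequality for the angular (geodesic) metric on the unit sphere in $\R^D$: for any three nonzero vectors $u,v,z$,
\begin{equation*}
\angle(u,z) \;\leq\; \angle(u,v) + \angle(v,z).
\end{equation*}
Applied to $\vec w$, $\vec w'$, $\sigma\cdot\vec w$, this gives $\angle(\vec w, \sigma\cdot\vec w) \leq \theta_1 + \theta_2$. Since $\sigma \neq \mathrm{id}$, the definition of $a$ forces $\angle(\vec w, \sigma\cdot\vec w) \geq a$, and combining with $\theta_1 \leq a/2$ yields
\begin{equation*}
\theta_2 \;\geq\; a - \theta_1 \;\geq\; a - a/2 \;=\; a/2 \;\geq\; \theta_1,
\end{equation*}
which is exactly what we needed. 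Thus $d_E(\vec w, \vec w') \leq d_E(\sigma\cdot\vec w, \vec w')$ for every $\sigma \in \Sigma_d$, so $\vec w' \in F$.

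I do not expect any serious obstacle: the whole argument is an elementary application of the spherical triangle inequality, together with the two easy observations that $\Sigma_d$ acts by isometries fixing the origin and that the Dirichlet domain $F$ is the intersection of the half-spaces $\{\vec w' : \langle \vec w', \vec w - \sigma\cdot\vec w\rangle \geq 0\}$ (restricted to $\calO_D$). The only minor point worth flagging is that $F_{a/2}$ is intended as a solid cone with apex at the origin lying in $\calO_D$; since $\vec w$ itself has all nonnegative entries and $F \subseteq \calO_D$, the containment is understood within $\calO_D$.
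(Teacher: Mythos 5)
Your proof is correct and takes essentially the same route as the paper's: both hinge on the angular (spherical) triangle inequality applied to $\vec w$, $\vec w'$, $\sigma\cdot\vec w$ together with $\angle(\vec w,\sigma\cdot\vec w)\ge a$ for $\sigma\neq\mathrm{id}$. The only cosmetic difference is the final step, where the paper normalizes to equal norms and compares chord lengths on a common sphere, while you expand the squared distances and cancel to reduce directly to $\cos\theta_1\ge\cos\theta_2$ --- slightly cleaner, and it sidesteps the normalization.
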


%\begin{remark}\label{note}  $a/2$ is the smallest angle between $\ve$ and a hyperplane boundary of $F$. 
%\end{remark}

\begin{proof}  
We claim that for vectors $\vu, \vel , \vv$, the angles formed by them satisfy
$$\angle(\vu,\vel ) + \angle (\vel ,\vv) \geq \angle(\vu,\vv).$$
To prove this, we may assume the $\vu,\vv,\vel $ are unit vectors. Let $\vec\ell'$ be the
projection of $\vel $ into the plane of $\vu$ and $\vv.$ Assume that $\angle(\vu,\vel ') + \angle(\vel ',\vv) = \angle(\vu,\vv).$  
Then $\cos(\angle(\vu, \vel ')) = \vu\cdot \vel '/|\vel '| \geq \vu\cdot \vel '$, and $\cos(\angle(\vu, \vel )) = 
\vu\cdot\vel  = \vu\cdot \vel ' + \vu\cdot(\vel -\vel ') = \vu\cdot\vel '$, since $\vel -\vel '$ is normal to the $(\vu, \vv)$-plane.  Thus $\angle(\vu,\vel ') \leq \angle(\vu, \vel ).$  Similarly, $\angle(\vel ',\vv) \leq \angle(\vel ,\vv).$  Thus $\angle(\vu,\vel ) + \angle (\vel ,\vv) \geq \angle(\vu,\vel ') + \angle (\vel ',\vv)
= \angle(\vu,\vv).$  The other possibility is that
$\angle(\vu,\vel ') - \angle(\vel ',\vv) = \angle(\vu,\vv)$ or the same with $\vu$ and $\vv$ switched. In this case, $\cos(\angle(\vu, \vel )) = \vu\cdot\vel ' \leq \cos(\angle(\vu, \vel '))$ implies $\angle(\vu, \vel )
\geq \angle(\vu,\vv).$

Thus for a fixed permutation $\sigma$, we  have
$\angle(\vel ,\vu) + \angle(\vu,\sigma\cdot\vel ) \geq \angle( w, \sigma\cdot  w) \geq a$. Therefore 
\begin{equation}\label{dist} \vu\in F_{a/2}\Longrightarrow \angle(\vu,\sigma\cdot\vel )\geq a/2.
\end{equation}
  We may assume that $|\vel | = |\vu|.$ 
Because $\vu$ and the $\sigma\cdot\vel $ all lie on the sphere of radius $|\vel |$, the distances between $\vu, \vel , \sigma\cdot\vel $ are proportional to the angles they form with the origin.  Thus (\ref{dist}) implies that $d^E(\vu,\sigma\cdot \vel ) \geq d^E(\vu, \vel ),$ so $\vu\in F.$
\end{proof}

Although the topology of the interior $F^o$  and its image $q(F^o)$ are the same, their geometries are very different; this underlies the difference in general between  
the Fr\'echet means in $F^o$ and $q(F^o).$   

\begin{example}[Example $C=\mathbb{R}^{2}/\mathbb{Z}_{4}$  continued]
\label{ex}
$F^o$ is the open first quadrant.  
Let $\vec v,\vel\in F^o$ cut out angles $\alpha,\beta$ with the positive $x$-axis, respectively. By the law of cosines, the Euclidean distance 
between $\vec v, \vel  $ is less than the distance between $\vec v$ and 
$R_{\pi/2}\vel  $, the rotation of $\vel  $ by $\pi/2$ radians counterclockwise,
iff $|\alpha -\beta |< \pi/4.$  Thus for $|\alpha-\beta| > \pi/4$, 
$d_E(\vec v,\vec w) < d_P([\vec v], [\vel ]).$  Thus distances in $F^o$ and $C$ are different.

This affects the Fr\'echet means.  Let $\nu = (4/3\pi) rdrd\theta $ be the uniform probability measure on $F$ supported on $\{(r,\theta)\in [1,2]\times [0, \pi/2]\}.$  
The Fr\'echet mean  on $F$ is the center of mass $(3/2,3/2).$  The cone $C$ has a circle action which rotates points equidistant from the vertex, and the Procrustean distance is clearly invariant under this action. This implies that if 
$[(r_0,\theta_0)]$ is a Fr\'echet mean on $C$, so is $[(r_0,\theta)]$ for all $\theta.$  Therefore, we can compute the Fr\'echet mean at $\theta = \pi/4.$  
Since $|(\pi/4)-\beta| \leq \pi/4$ in $F$, $d_E((r_0,\pi/4), (r,\theta)) = 
d_P((r_0,\pi/4), (r,\theta))$ for all $(r,\theta) \in F.$  The previous computation 
gives $r_0 = 3/2.$  We conclude that the Fr\'echet mean on $C$ is the entire circle $r = 3/2.$

\end{example}

%As in \cite{afsari11}, Kendall's result just gives uniqueness of geodesics and hence of 
%the Fr\'echet mean when distances are computed inside the ball of his radius, not distances within the entire manifold.  This is an inevitable limitation of Kendall's result.

%In any case, this motivates the following definition.

%\begin{definition}  Let $K\subset \pi(F^o)\subset\un$ be compact.  
%The $F^o$-{\it Fr\'echet mean of $K$} is $a\in\un$ if the Fr\'echet mean $b$ of $\pi^{-1}K$ has $\pi(b) = a$.  (i.e., $a$ is the Fr\'echet mean computed ``back in $F^o.$")  
% The $\ud$-{\it Fr\'echet mean of $K$} is the Fr\'echet mean of $K$ computed in $\ud.$
%\end{definition}

%We really want a unique $\ud$-Fr\'echet mean, but the $F^o$-Fr\'echet mean is easier to compute. 
%We are interested in cases in which  the $F^o$-Fr\'echet mean projects to the $\ud$-Fr\'echet mean.

%\noindent {\it Key question}:  given an unlabeled network $\calO\in\calU_d$, find a maximal 
%neighborhood $U$ of $\calO$ on which the $\ud$-Fr\'echet mean is unique.

%We now show find a cone $K$ contained in $F_{a/2}$ such that  the $F^o$-Fr\'echet mean projects to the 
%$\ud$-Fr\'echet mean on $K$.
%Kendall's work is not very helpful in our situation, as the stratified space $\ud$ is flat on the big stratum, and as a glued cone its injectivity radius is zero in any reasonable sense.  However, a compact subset of the big stratum should have a nonzero infectivity radius, and this idea is central to the proof.

We now find a sub-cone of $F$ such that the Fr\'echet mean of a compact convex set $K$ inside this sub-cone
projects to the unique Fr\'echet mean of the associated quotient space $K = q(K')$ inside $\calU_d.$ As explained at the beginning of this section, this allows us to derive a central limit theorem on $K$. 

\begin{theorem}  \label{bestestimate} %Let 
% $K$ be a compact convex subset of the closed cone $F_{a/3}$, and let $Q$ be a distribution supported on $K$.  Let $Q' = q_*Q$ be the pushdown measure on $K' = q(K)$.  
%Then the Fr\'echet mean of $K$ projects to the unique
%Fr\'echet mean $\mu_{K'}$ of $K' $.  In other words, 
Let $K\subset\mathcal U_d$ be  such that there exists a compact convex set $K'\subset F_{a/4}$ with $q:K'\to K$ a homeomorphism.  Then the Fr\'echet mean $\mu_{K}$ of $K$ is unique and satisfies $q(\mu_{K'})=\mu_{K}$.
\end{theorem}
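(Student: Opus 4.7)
The plan is to exploit the fact that the Procrustean and Euclidean distances agree on the narrow cone $F_{a/4}$, reducing the Fr\'echet-mean problem on $\calU_d$ to a Euclidean optimization. First I would establish this local isometry: for $\vu,\vv\in F_{a/4}$ and $\sigma\in\Sigma_d\setminus\{{\rm id}\}$, two applications of the spherical triangle inequality through $\ve$ give
\[
 \angle(\vu,\sigma\vv) \;\geq\; \angle(\ve,\sigma\ve) - \angle(\vu,\ve) - \angle(\vv,\ve) \;\geq\; a-a/4-a/4 = a/2,
\]
while $\angle(\vu,\vv)\leq a/2$ by the triangle inequality. Since $|\sigma\vv|=|\vv|$, the law of cosines converts this into $d_E(\vu,\vv)\leq d_E(\vu,\sigma\vv)$, so $d_P([\vu],[\vv])=d_E(\vu,\vv)$; equivalently, $\vu\cdot\sigma\vv\leq\vu\cdot\vv$ for every $\vu,\vv\in F_{a/4}$ and $\sigma\in\Sigma_d$.

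Given a distribution $Q'$ on $K'$ with Euclidean centroid $\mu_{K'}:=\int\vec z\,dQ'(\vec z)$, compactness and convexity of $K'$ place $\mu_{K'}\in K'\subset F_{a/4}$. I would then rewrite the Fr\'echet function of $Q=q_*Q'$ on $\calU_d$ as a double minimum:
\[
 f([\vx]) \;=\; \int_{K'}\min_{\sigma\in\Sigma_d} d_E(\vx,\sigma\vec z)^2\,dQ'(\vec z) \;=\; \inf_{\sigma(\cdot)} f_\sigma(\vx),
\]
where the outer inf ranges over measurable $\sigma:K'\to\Sigma_d$ and
\[
 f_\sigma(\vx) \;=\; \int d_E(\vx,\sigma(\vec z)\vec z)^2\,dQ'(\vec z) \;=\; |\vx|^2 - 2\,\vx\cdot c_\sigma + C,
\]
with $c_\sigma=\int\sigma(\vec z)\vec z\,dQ'$ and $C=\int|\vec z|^2\,dQ'$. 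Each quadratic $f_\sigma$ attains its minimum at $\vx=c_\sigma$ with value $C-|c_\sigma|^2$, so swapping the two minima yields $\inf_{[\vx]\in\calU_d} f([\vx]) = C - \sup_\sigma|c_\sigma|^2$.

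The crux is the bound $|c_\sigma|^2\leq|\mu_{K'}|^2$. Expanding $|c_\sigma|^2 = c_\sigma\cdot c_\sigma$ and using $\sigma^T=\sigma^{-1}$ for permutation matrices,
\[
 |c_\sigma|^2 \;=\; \iint \vec z\cdot\tau\vec z'\,dQ'(\vec z)dQ'(\vec z') \;\leq\; \iint \vec z\cdot\vec z'\,dQ'dQ' \;=\; |\mu_{K'}|^2,
\]
where $\tau := \sigma(\vec z)^{-1}\sigma(\vec z')$ and the inequality is the inner-product form of the first step applied pointwise. Equality at $\sigma\equiv{\rm id}$ shows that $\mu_{K'}$ attains the minimum, so $q(\mu_{K'})$ is a Fr\'echet mean of $Q$. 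For uniqueness, equality $|c_\sigma|=|\mu_{K'}|$ forces $\vec z\cdot\tau\vec z'=\vec z\cdot\vec z'$ for $(Q'\times Q')$-a.e.\ $(\vec z,\vec z')$; on the open cone interior $F_{a/4}^o$ the angle inequalities above become strict, hence $\tau={\rm id}$ a.e., forcing $\sigma(\cdot)$ to be a.s.\ a constant $\pi_0\in\Sigma_d$. Then $c_\sigma=\pi_0\mu_{K'}$ lies in the orbit of $\mu_{K'}$, so the unique Fr\'echet minimizer on $\calU_d$ is the single point $[\mu_{K'}]=q(\mu_{K'})=\mu_K$.

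The hard part will be handling points of $K'$ on the topological boundary of $F_{a/4}$, where the strict angle inequality can degenerate; I would either restrict attention to the open interior of the cone or combine continuity with the convex structure of $K'$ to show the degenerate pairs form a $(Q'\times Q')$-null set. A minor technical point is the measurable selection of the minimizing $\sigma:K'\to\Sigma_d$, which is routine because $\Sigma_d$ is finite.
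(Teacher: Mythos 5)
Your opening step---the angle estimate giving $d_P([\vu],[\vv])=d_E(\vu,\vv)$ on $F_{a/4}$---is the same as the paper's, though your one-line chain $\angle(\vu,\sigma\vv)\geq\angle(\ve,\sigma\ve)-\angle(\vu,\ve)-\angle(\vv,\ve)\geq a/2$ is cleaner than the paper's detour through a wall of $F$, and your use of the law of cosines correctly handles $|\vu|\neq|\vv|$ where the paper normalizes to the unit sphere. After that the routes genuinely diverge. The paper simply notes that $q|_{K'}:K'\to K$ is then an isometry, that the Euclidean Fr\'echet function on the compact convex set $K'$ is strictly convex with unique minimizer the centroid $\mu_{K'}\in K'$, and transfers this through $q$; that argument only compares Fr\'echet values at points of $K$ and is silent about candidate minimizers elsewhere in $\calU_d$. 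Your double minimization over measurable selections $\sigma:K'\to\Sigma_d$, with $\inf f = C-\sup_\sigma|c_\sigma|^2$ and the bound $|c_\sigma|\leq|\mu_{K'}|$ coming from $\vec z\cdot\tau\vec z'\leq\vec z\cdot\vec z'$, buys something real: it shows $q(\mu_{K'})$ minimizes $f$ over \emph{all} of $\calU_d$, which is what the definition of the Fr\'echet mean actually demands. That identification part of your proposal is complete (each quadratic $f_\sigma$ is minimized at $c_\sigma$, which lies in $\calO_D$ as an average of nonnegative vectors, so the orthant constraint is not active).

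The gap you flag in the uniqueness step is genuine, and neither of your proposed remedies closes it for an arbitrary $Q'$: a discrete measure may charge the topological boundary of $F_{a/4}$, where $\angle(\vec z,\tau\vec z')=\angle(\vec z,\vec z')$ can hold for some $\tau\neq{\rm id}$, and then $|c_{\sigma_0}|=|\mu_{K'}|$ no longer forces the selection $\sigma_0$ to be a.e.\ constant, leaving open a second global minimizer $[c_{\sigma_0}]$ outside the orbit of $\mu_{K'}$. You can recover uniqueness \emph{among minimizers in $K$} by grafting in the paper's observation---strict convexity of the Euclidean Fr\'echet function on $K'$ plus the isometry needs no strict angle inequality---so your argument already matches the paper's conclusion; but excluding minimizers outside $K$ (which the paper does not attempt) still requires either the hypothesis $K'\subset F_{a/4}^{o}\cup\{\vec 0\}$, where your strictness argument goes through, or an additional rigidity argument for the degenerate boundary configurations.
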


See Figure \ref{fig-cone} for a schematic picture. This result is discussed without a full proof in \cite{Jainb2016, Jaina2016}.  Note that 
 $\mu(K)$ can be computed by finding the center of mass $\mu_{K'}$ of $K'$ (with respect to the pullback of a distribution 
$Q$ supported in $K$) by standard integrals, and then projecting to $\ud.$

\begin{proof}
Take $\vu, \vv\in F_{a/4}$ with $|\vu|=|\vv|=1.$   
As in the previous lemma, 
  $\angle(\vu,\vel ) + \angle (\vel ,\vv) \geq \angle(\vu,\vv)$, with equality iff $\vu,\vv,\vel $ are coplanar.  Thus
$$\angle(\vu,\vv) \leq  2(a/4).$$
 On the other hand, 
for $\sigma \neq {\rm id}$, we have as above
$$\angle(\vu,\sigma\cdot\vv) + \angle(\sigma\cdot\vv,\sigma\cdot\vel ) \geq\angle(\vu,\sigma\cdot\vel )
\Rightarrow \angle(\vu,\sigma\cdot\vv) \geq \angle(\vu,\sigma\cdot\vel ) - \angle(\vv,\vel ). $$
%$$\angle(\vu,\vel ) + \angle(\vel ,\sigma\cdot\vel ) \geq \angle(\vu, \sigma\cdot\vel ),\ 
%\angle(\vu,\sigma\cdot\vv) + \angle(\sigma\cdot\vv,\sigma\cdot\vel ) \geq\angle(\vu,\sigma\cdot\vel ).
%$$
Let the plane containing $\vec 0, \vu, \sigma\cdot\vel$ intersect $\partial F$ at a line containing the unit vector
$\vec z$.  Then
$$\angle(\vu,\sigma\cdot\vel ) = \angle(\vu,\vec z) + \angle(\vec z, \sigma\cdot\vel) \geq (a/2-a/4) + a/2 
= 3a/4.$$
Therefore
$$\angle(\vu,\sigma\cdot\vv) \geq 3a/4 - a/4 = a/2 \geq \angle(\vu,\vv).$$
%Therefore 
%$$\angle(\vu, \sigma\cdot\vv) \geq \angle (\vu, \vel )+\angle(\vel ,\sigma\cdot\vel ) 
% - \angle(\sigma\cdot\vv, \sigma\cdot \vel ) \geq 0+a - a/3 = 2a/3.$$ 
Since $|\vu| = |\vv|= |\sigma\cdot\vv|= 1$, the distance between these vectors is proportional to their angles.  This implies that
$$d_E(\vu,\vv) = d_P([\vu],[\vv])$$
on $F_{a/4}.$

As a compact convex subset of Euclidean space, $K'$ has a unique Fr\'echet mean. It follows that $K$ is compact convex.  
$q:K'\to K$ is a homeomorphism, so for $[\vx]\in K$, the Fr\'echet functions on $K'$ and $K$ satisfy
$$f([\vx]) = \int_{K} d^2_P([\vx], [\vec y])q_*Q(d[\vec y]) = \int_{K'}d^2_E(\vx,\vec y) Q(d\vec y) = f(\vec x).$$
Thus the Fr\'echet mean $\mu_{K'}$ of $f$ on $K'$ 
projects to $\mu_{K}$, the unique Fr\'echet mean of $f$ on $K$.

%Take $\vu, \vv\in F_{a/3}$ with $|\vu|=|\vv|=1.$   
%As in the previous lemma, 
%%  $\angle(\vu,\vel ) + \angle (\vel ,\vv) \geq \angle(\vu,\vv)$, with equality iff $\vu,\vv,\vel $ are coplanar.  Thus
%$$\angle(\vu,\vv) \leq  2(a/3).$$
% On the other hand, 
%for $\sigma \neq {\rm id}$, we have as above
%$$\angle(\vu,\vel ) + \angle(\vel ,\sigma\cdot\vel ) \geq \angle(\vu, \sigma\cdot\vel ),\ 
%\angle(\vu,\sigma\cdot\vv) + \angle(\sigma\cdot\vv,\sigma\cdot\vel ) \geq\angle(\vu,\sigma\cdot\vel ).
%$$
%Therefore 
%$$\angle(\vu, \sigma\cdot\vv) \geq \angle (\vu, \vel )+\angle(\vel ,\sigma\cdot\vel ) 
% - \angle(\sigma\cdot\vv, \sigma\cdot \vel ) \geq 0+a - a/3 = 2a/3.$$ 
%Since $|\vu| = |\vv|= |\sigma\cdot\vv|= 1$, the distance between these vectors is proportional to their angles.  This implies that
%$$d^E(\vu,\vv) = d^P([\vu],[\vv])$$
%on $F_{a/e}.$

%As a compact convex subset of Euclidean space, $K$ has a unique Fr\'echet mean. It follows that $K'$ is compact convex.  
%$q:K\to K'$ is a homeomorphism, so for $[\vx]\in K'$, the Fr\'echet functions on $K'$ and $K$ satisfy
%$$f([\vx]) = \int_{K'} d^2_P([\vx], [\vec y])q_*Q(d[\vec y]) = \int_Kd^2_E(\vx,\vec y) Q(d\vec y) = f(\vec x).$$
%Thus the Fr\'echet mean $\mu_K$ of $f$ on $K$ 
%projects to $\mu_{K'}$, the unique Fr\'echet mean of $f$ on $K'$.
\end{proof}

From Thm.~\ref{bestestimate}, we derive the main result Thm.~\ref{clt}.  

%It follows from the proof that the distance function $d_P^2$ is convex on the projection of the 
%$F_{a/4}$ cone. 

\begin{figure}
\centering
$$
\begin{tikzpicture}

\draw[fill = yellow] (0,0) -- (2.25,7.5) -- (5,6) --(0,0);
\draw[fill = yellow] (0,0) -- (8,5) -- (5,6) --(0,0);
\draw[fill = yellow] (0,0) -- (8,5) -- (10.5,4.95) --(0,0);

\draw [red] (4.5,-4) rectangle (6,-3.5);
\draw[red] (4.5,-3.5) to (5.2,-3.2) to (6.7,-3.2) to (6,-3.5);
\draw[red] (6.7,-3.2) to (6.7,-3.7) to (6,-4);
 \draw [dashed,red, thick] (5.2,-3.2) to (5.2,-3.7) to (4.5,-4);
  \draw [dashed,red, thick]  (5.2,-3.7) to (6.7,-3.7);
  \node[above] at (5.5, -3.2) {$K$};

\draw [red] (10.3,8.5) rectangle (11.8,9);  
\draw[red] (10.3,9) to (11,9.3) to (12.5,9.3) to (11.8,9);
\draw[red] (12.5,9.3) to (12.5,8.8) to (11.8,8.5);
 \draw [dashed,red, thick] (11,9.3) to (11,8.8) to (10.3,8.5);
  \draw [dashed,red, thick]  (11,8.8) to (12.5,8.8);
  \node[above] at (11.5, 9.4) {$K'$};

\draw[<->, thick] (11,0) -- (0,0) -- (0,11); 

 \draw[thick, rotate = -20, shift = {(1.15,5.15)}, green] (7.5,7,6) ellipse (2.5cm and .2cm);
  \draw[thick, rotate = -18, shift = {(3.85,9.3)}, blue] (7.5,7,6) ellipse (1.98cm and .2cm);

 \node[right] at (11,0) {$x_1$};
\node[above] at (0,11) {$x_{D}$};
\draw[->, thick] (0,0) -- (9.72,8.1);
\node[right] at (9.72,8.1) {$\vel $};
\draw (2.5,3) -- (4,2.5) -- (7, 3.3);
\draw (1.5,5) -- (2.5,3);
\draw[dashed] (7,3.3)  -- (1.5,5);
\draw (0,0) -- (2.25,7.5);
\draw (0,0) -- (8,5);
\draw (0,0) -- (5,6);
\draw (0,0) -- (10.5,4.95);
\draw (2.25,7.5) -- (5,6) -- (8,5) -- (10.5,4.95) -- (2.25,7.5);
%\fill (6.95,5.8) circle (2pt);

\draw[thick,green] (0,0) to (11.7, 6.23);%(117/10,63/10);
\draw  (8,6.7) arc [radius=2, start angle=59, end angle= -6];
\node[above] at (9.1,6.2) {$\angle a/2$};
\draw  (9.3,7.7) arc [radius=2, start angle=44, end angle= 80.8];
\node[above] at (9.1,8.2) {$\angle a/3$};
\draw[thick, green] (0,0) to (62.8/9,71.5/9);
\node[right, green] at (7.2,7) {${\bf F}_{a/2}$};

\node[right] at (4.7,6.4) {${\bf F}$};
\draw [->,thick] (5.5,-.5) to (5.5,-2);
\node[right] at (5.5,-1.25) {$q$};

\draw (5.5,-3.5) ellipse (2cm and 1 cm);
\node[right] at (7.8, -3.5) {$\ud$};
\draw (3.5,-3.5) to [out=-20,in=195] (7.5,-3.5);

%\draw  (3,3) arc [radius=6, start angle=-20, end angle= -100];
\draw[blue] (0,0) to (14.83,9.9);
\draw[blue] (0,0) to (11.05,11.15);
\node[blue] at (13.5, 9.8) {${\bf F}_{a/4}$};

\draw (0,-3.6) to (4.82,-2.54);
\draw(0,-3.6) to (4.82, -4.47);
\draw (2,-4) arc (-13:10:2);
\draw[dashed](2.0, -3.9) arc (-160:-200:1);
\end{tikzpicture}
$$
\caption{ $K\subset F_{a/4}$, the blue cone.
$K$ and $K'$ are homeomorphic via $q.$  
The Euclidean distance between points $\vx,\vec y\in F_{a/4}$ is the same as the Procrustean distance between their orbits $[\vec x],[\vec y],$ so $K'$ and $K$ are actually isometric. The Fr\'echet means of $K$ and $K'$ are related by 
$\mu_{K} = q(\mu_{K'}).$  In particular, the Fr\'echet mean of $K$ is unique. }
\label{fig-cone}
\end{figure}
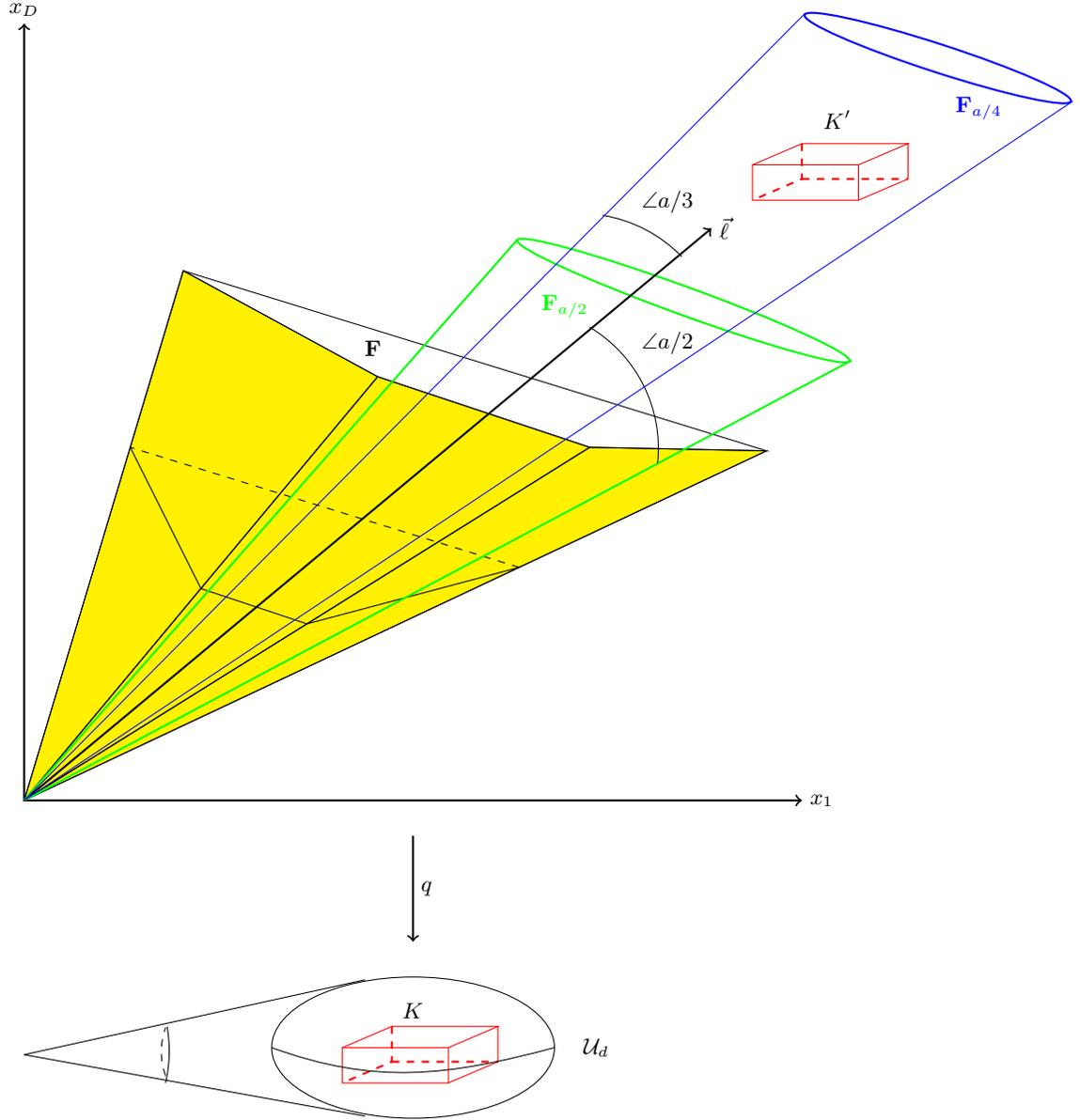

%\begin{itemize}
%\item[] Figure 5.  $K\subset F_{a/3}$, the blue cone.
%$K$ and $K'$ are homeomorphic via $\pi.$  
%The Euclidean distance between points $x,y\in F_{a/3}$ is the same as the Procrustean distance between their orbits $[x],[y],$ so $K$ and $K'$ are actually isometric. The Fr\'echet means of $K$ and $K'$ are related by 
%$\mu_{K'} = \pi(\mu_K).$  In particular, the Fr\'echet mean of $K'$ is unique. 
%\end{itemize}

We now prove that Thm.~\ref{bestestimate} is a quantitative improvement over the optimal 
estimate for general Riemannian manifolds due to Afsari:

\begin{theorem}{\rm \cite[Thm.~2.1]{afsari11}  }\label{af}
Let $M$ be a complete Riemannian manifold with sectional curvatures at most $\Delta$ and 
injectivity radius $\iota_M.$  Set 
$$\rho_0 = \frac{1}{2}\min\left\{\iota_M, \frac{\pi}{\Delta}\right\},$$
with the convention that $\frac{\pi}{\Delta} = \infty$ if $\Delta \leq 0.$  For any $\rho<\rho_0$, a 
geodesic ball of radius $\rho$ in $M$ has a unique Fr\'echet mean.
\end{theorem}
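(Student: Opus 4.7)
The plan is to establish uniqueness of the Fr\'echet mean on the geodesic ball $B = B_\rho(p)$ of radius $\rho < \rho_0$ by proving that the Fr\'echet function $f$ is strictly geodesically convex on $B$, from which both uniqueness of the minimizer and its location inside $B$ will follow.

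First I would verify that $B$ is \emph{strongly convex}: the bound $\rho < \iota_M/2$ makes every pair of points in $B$ joined by a unique minimizing geodesic that itself lies in $B$, while the curvature bound keeps these geodesics strictly inside the conjugate radius of the model space of constant curvature $\Delta$, so that Jacobi fields along them do not vanish and the distance function is smooth in both arguments. The core analytic step is then a Hessian estimate for the squared distance: for each fixed $y \in B$ and each unit-speed geodesic $\gamma$ in $B$,
\[
\frac{d^2}{dt^2}\, d^2(\gamma(t), y) \;\geq\; 2\, c_\Delta\!\bigl(d(\gamma(t), y)\bigr) \;>\; 0,
\]
where $c_\Delta$ is the comparison quantity produced by Rauch's theorem (equivalently Toponogov) against the simply-connected two-dimensional model of constant curvature $\Delta$. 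The bound $\rho < \rho_0$ is exactly what forces the argument of $c_\Delta$ to stay below the first conjugate distance in the model, so the comparison Jacobi field does not vanish and the lower bound is strictly positive, uniformly in $y \in B$.

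Next, averaging: writing $f(x) = \int_M d^2(x,y)\, Q(dy)$, strict geodesic convexity along any $\gamma \subset B$ is inherited from the integrand, since the pointwise Hessian bound integrates to a uniform positive lower bound on $f''$ (using that $\mathrm{supp}(Q) \subset B$). A strictly convex function on a geodesically convex set has at most one critical point, which is its unique minimizer; and a projection argument places this minimizer inside $B$, because any candidate $x$ outside $B$ can be moved toward $p$ along a minimizing geodesic in a way that strictly decreases $d(x,y)$ for every $y \in \mathrm{supp}(Q)$, hence strictly decreases $f(x)$.

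The main obstacle is the \emph{sharpness} of the constant $\rho_0$. Earlier results of Karcher, Kendall, and Le already give uniqueness on smaller balls by the same outline; Afsari's contribution is to squeeze the comparison-geometry estimate so that the strict Hessian lower bound persists right up to $\rho = \rho_0$, together with companion examples showing that no larger radius works in general. The delicate point is therefore not the logical structure above but the tightness of the Jacobi-field comparison: any loss in the inequality --- for instance replacing the exact $c_\Delta$ by a cruder Taylor bound --- immediately collapses the admissible radius below $\rho_0$, so the technical care concentrates in extracting the optimal form of Rauch's estimate right up to the critical radius.
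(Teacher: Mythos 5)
First, note that the paper does not prove this statement at all: it is quoted verbatim from \cite[Thm.~2.1]{afsari11} and used only as a benchmark against which Theorem~\ref{bestestimate} is compared. So there is no in-paper proof to match your argument against; your sketch has to stand on its own as a proof of Afsari's theorem, and as written it does not.

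The gap is in your core analytic step. You claim a uniform strictly positive lower bound on $\frac{d^2}{dt^2}\,d^2(\gamma(t),y)$ for all $y$ in the ball of radius $\rho<\rho_0$, and then integrate to get strict geodesic convexity of the Fr\'echet function. But when $\Delta>0$ this is false on balls of the stated size. Two points of $B_\rho(p)$ can be at distance up to $2\rho$, which may approach $\pi/\sqrt{\Delta}$; the Hessian comparison theorem gives, in directions orthogonal to $\nabla d$,
$$\mathrm{Hess}\bigl(\tfrac{1}{2}d^2(\cdot,y)\bigr)\;\geq\;\sqrt{\Delta}\,d\,\cot\bigl(\sqrt{\Delta}\,d\bigr),$$
which is \emph{negative} once $d>\pi/(2\sqrt{\Delta})$, and this bound is attained on the round sphere of curvature $\Delta$. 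So $d^2(\cdot,y)$ is genuinely non-convex on the relevant range, the averaged function $f$ need not be convex on $B_\rho(p)$, and your argument only reaches the smaller Karcher radius $\tfrac{1}{2}\min\{\iota_M,\pi/(2\sqrt{\Delta})\}$ --- precisely the gap Afsari (following Kendall and Le) closes. Your closing remark that Afsari's contribution is to ``squeeze the comparison-geometry estimate so that the strict Hessian lower bound persists right up to $\rho=\rho_0$'' is the wrong diagnosis: no such Hessian bound persists, and the proof at the sharp radius must abandon convexity of $f$ in favor of a different mechanism (Kendall's analysis of regular geodesic balls, an auxiliary strictly convex function on the ball, and an index/degree argument showing $\mathrm{grad}\,f$ has exactly one zero). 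As a minor aside, the constant in the paper is printed as $\pi/\Delta$ where Afsari's theorem has $\pi/\sqrt{\Delta}$; that is the paper's typographical slip, not yours, but it is worth flagging since the whole point of the theorem is the sharpness of this constant.
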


The injectivity radius is the supremum of  $r>0$ such that every geodesic ball of 
radius $r$ is a topological ball. Afsari's theorem does not apply  to $\calU_d$, which is not a manifold.  We also cannot apply this theorem to the more tractable interior $F_ w^o$ of $F_ w$, which is diffeomorphic to $\calU_d$ minus a set of measure zero, because  $F_ w^o$ is not complete in the Euclidean metric, and has zero injectivity radius.  

However, we can compare Afsari's result to Thm.~\ref{bestestimate} on (the locally complete) geodesic balls inside $F_a$, as these are ordinary Euclidean balls. The Euclidean metric has zero curvature, so
$\pi/\Delta = \infty$ in our convention.  Therefore, 
we need the injectivity radius of the smooth points of cone $F_a.$  Take $\vv$ lying on the cone axis.  A ball $B_{\vv}$ 
centered at $\vv$ and tangent to the cone at a point $P$ determines a right triangle $\Delta O\vv P$. This ball has radius $|\vv|\sin(a/2)$, so this is the injectivity radius $\iota_{B_{\vv}}$ in 
Thm.~\ref{af}.
Thus Afsari's theorem applies to a ball of half this radius, denoted $B_{\vv}\left(|\vv|\sin(a/2)/2\right).$

For Thm.~\ref{bestestimate}, we can take any compact set $K'$ inside $F_{a/4}.$  To show that this Theorem improves the general Afsari result, we find such a $K'$ containing $B_{\vv}\left(|\vv|\sin(a/2)/2\right).$  This follows if the $F_{a/4}$ cone contains the cone containing $B_{\vv}\left(|\vv|\sin(a/2)/2\right)$, which has cone angle 
$\sin^{-1}(\sin(a/2)/2).$  Since $\sin$ is increasing for $a\in (0,\pi/2)$, it suffices to show that 
\begin{equation}\label{sine} \sin (a/4) \geq \sin(a/2)/2.
\end{equation}
%Substituting $a' = 6a$ transforms (\ref{sine}) into
%$$2\sin(a')\cos(a') \geq \frac{1}{2}\left(2 \sin(2a')\cos(a')\right) = 2\sin(a')\cos^2(a'),$$
%which is true.   
This follows from $\sin(2\theta) = 2\sin(\theta)\cos(\theta) \leq 2\sin(\theta).$

As a result, $K' = q(K)\subset \calU_d$ has a unique Fr\'echet mean, even though its radius is larger than the bound in Thm.~\ref{af}.  This just says that the Afsari bound, which is universal for all Riemannian manifolds, may have an improvement on specific manifolds.
Figure \ref{fig-improvement} illustrates this improvement.  
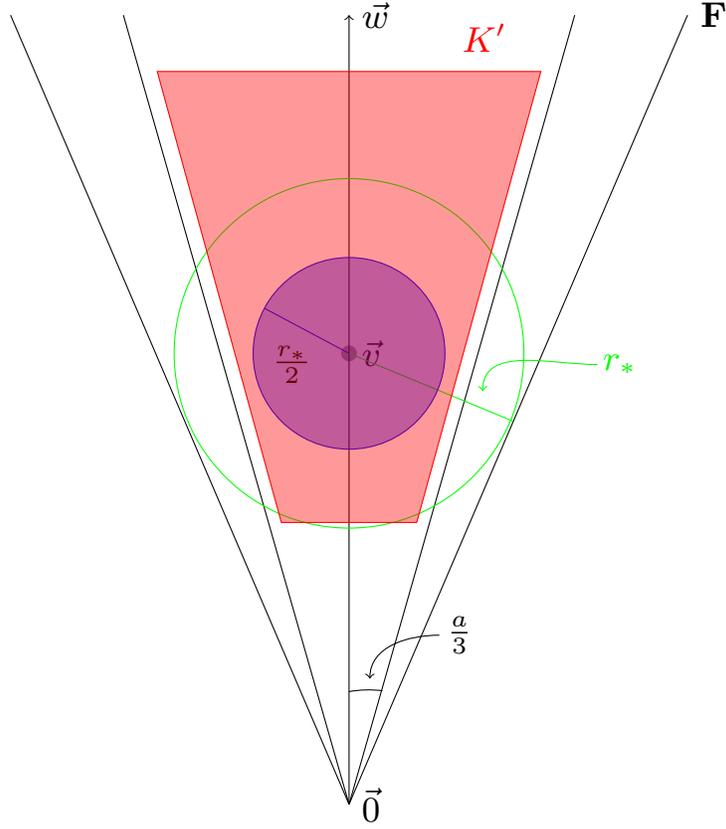
\begin{figure}
\centering
$$
\begin{tikzpicture}[scale=1.5, transform shape, fill opacity = .4]
\draw (0,0) -- (-3,7);
\draw(0,0) -- (3,7);
\draw (0,0) -- (-2,7);
\draw (0,0) -- (2,7);
\node[right,opacity = 1] at (0,0) {$\vec 0$};
\draw [->](0,0) -- (0,7);
\node[right,opacity = 1] at (0,7)  {$\vec  w$};
\node[right,opacity = 1] at (3,7){{\bf F}};

\fill (0,4) circle  (2pt);  
\node[right, opacity = 1] at (0,4) {$\vec v$};
\draw[green] (0,4) -- (1.45,10.2/3);
\draw[blue, fill = blue] (0,4) circle (.85 cm);
\node[green, opacity = 1] at (2.4, 3.9) {$r_*$};
%\draw[left] (1.7,3) arc (120:170:.6);
\node[anchor=east] at (1.3,3.55) (line) {};
  \node[anchor=west] at (2.2,3.9) (r) {};
  \draw[<-,green] (r) edge[out=180,in=90,->] (line);
  
\node[right, opacity = 1] at (-.8,3.9) {$\frac{r_*}{2}$};
\draw[blue] (0,4) -- (-.75,4.4);

\draw[green] (0,4) circle (1.55 cm);

\draw (0,1) arc (100:83:1);
\node[right,opacity = 1] at (.75,1.5) {$\frac{a}{3}$};
\node[anchor=west] at (.8,1.5) (arc angle) {};
\node[anchor=east] at (.3,1) (arc itself) {};
\draw[<-] (arc angle) edge[out=180,in=90,->] (arc itself);

%\filldraw[draw=black, fill = blue, opacity = 0.3] (0,0,0) -- (5,5,0) -- (5,5,5) -- (0,0,5)--cycle;   
%\filldraw[draw =red, fill = red, opacity = 0.3] (.6,2.5) -- (1.7,6.5) -- (-1.7,6.5) -- (-.6,2.5) -- (.6,2.5);
\filldraw[draw =red, fill = red] (.6,2.5) -- (1.7,6.5) -- (-1.7,6.5) -- (-.6,2.5) -- (.6,2.5);
\node[red, opacity = 1] at (1.2, 6.8) {$K'$};

\end{tikzpicture}
$$
\caption{A cross-section of the fundamental domain {\bf F} showing the improvement of Thm.~\ref{bestestimate} over the Afsari result. The Afsari work gives a unique 
Fr\'echet mean of the image in
$\calU_d$ to
 the ball of radius $r_*/2$, where $r_*$ is the injectivity radius at $\vv.$  Thm.~\ref{bestestimate} 
 gives a unique Fr\'echet mean to the image in $\calU_d$ of the
larger compact set $K'$ inside the $a/3$ cone.  The boundary of $K'$ can extend down to $\vec 0$, outwards as far as the walls of the $a/3$ cone, and to any finite height. }
\label{fig-improvement}
\end{figure}

%\begin{itemize}
%\item[] Figure 6.  A cross-section of the fundamental domain {\bf F} showing the improvement of Thm.~\ref{bestestimate} over the Afsari/Kendall result. The Afsari/Kendall work gives a unique 
%Fr\'echet mean of the image in
%$\calU_d$ to
% the ball of radius $r_*/2$, where $r_*$ is the infectivity radius at $\vv.$  Thm.~\ref{bestestimate} 
% gives a unique Fr\'echet mean to the image in $\calU_d$ of the
%larger compact set $K'$ inside the $a/3$ cone.  The boundary of $K$ can extend down to $\vec 0$, outwards as far as the walls of the $a/3$ cone, and to any finite height. 
%\end{itemize}

\subsection{ Generalizations of Thm.~\ref{bestestimate}}
Thm.~\ref{bestestimate} can be improved in two directions.  The first improvement, Theorem \ref{main theorem}, is a technical analytic extension which shows that
 the uniqueness of the Fr\'echet mean still holds for smooth distributions that are close to compactly supported distributions supported in the $F_{a/4}$ cone.  In practical terms, 
this means that distributions may have small tails outside the $F_{a/4}$ cone and still have a unique Fr\'echet mean.  The second improvement, Theorem \ref{embedding} shows that any compact subset $E\subset \ud$ has a unique Fr\'echet mean after isometrically embedding $E$ into the Banach space of bounded functions on $E$ in the sup norm.  While this is more appealing, the new Fr\'echet mean may not lie in the image of $E$ and may be hard to compute.

\begin{theorem} \label{main theorem}
Let $Q $ be a smooth probability distribution with support inside a compact set $ 
E\subset F_{a/4}. $  There exists $ \zeta >0$ such that for every  $ Q' \in B_{Q}^{3} (\zeta) \cap C_{c}(F) $, 
the Fr\'echet function $f_{Q'}$  has a unique minimum.
\end{theorem}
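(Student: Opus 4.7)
The strategy is to exploit the quadratic structure of $f_Q$ on the cone $F_{a/4}$ guaranteed by Theorem~\ref{bestestimate}, combine it with a quantitative gap between the minimum and the rest of the Fr\'echet function on a compact region, and then transfer these properties to $f_{Q'}$ via a $C^2$-stability estimate.

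First I would pin down the structure of $f_Q$ itself. Because $Q$ is supported in $E \subset F_{a/4}$ and $d_P(\vec x,\vec z) = d_E(\vec x,\vec z)$ for all $\vec x,\vec z \in F_{a/4}$ (the key distance identity established in the proof of Theorem~\ref{bestestimate}), the restriction of $f_Q$ to $F_{a/4}$ equals the Euclidean Fr\'echet function $p \mapsto \int_E \|p - z\|^2\, Q(dz)$. This is a strictly convex quadratic with constant Hessian $2\,\mathrm{Id}$, attaining its unique minimum at the Euclidean center of mass $\mu_Q$, which lies in the open cone $F_{a/4}^o$ since $Q$ has a smooth density. By Theorem~\ref{bestestimate}, $\mu_Q$ is also the unique global minimizer of $f_Q$ on $\calU_d$. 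I would then fix a compact convex neighborhood $V \subset F_{a/4}^o$ of $\mu_Q$ and a large compact set $K \subset F$ containing $V$ and the supports of all $Q'$ in the ball (valid for $\zeta$ small), and set the gap $\delta := \inf_{p \in K\setminus V}\bigl(f_Q(p) - f_Q(\mu_Q)\bigr) > 0$.

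Next I would establish the stability statement: the map $Q' \mapsto f_{Q'}$ is continuous from a $B^3$-neighborhood of $Q$ into $C^2(K)$. For smooth compactly supported $Q'$, $f_{Q'}$ is twice differentiable by differentiation under the integral sign, because $d_P^2(p,z) = \min_{\sigma} \|p - \sigma z\|^2$ is piecewise smooth in $p$ with $C^0$-gluing along the hypersurfaces where two permutations tie, and integration against a smooth density absorbs those seams. Splitting $f_{Q'}$ as $\int_{F_{a/4}} + \int_{F \setminus F_{a/4}}$, the first integral restricted to $V$ contributes exactly $2\,\mathrm{Id}\cdot Q'(F_{a/4})$ to the Hessian (since $d_P^2 = d_E^2$ there), while the second has $Q'$-mass bounded by $\|Q' - Q\|_{L^1} \le C_0\,\zeta$. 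Combined with the boundedness of $\mathrm{Hess}_p\, d_P^2$ on each smooth piece, this gives an estimate $\|f_{Q'} - f_Q\|_{C^2(K)} \le C\,\zeta$ for some $C$ depending on $K$ and $d$.

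Combining these ingredients, I would choose $\zeta$ so small that for every $Q' \in B_Q^3(\zeta) \cap C_c(F)$: (i) on $V$ the Hessian $\mathrm{Hess}\, f_{Q'}$ lies within operator norm $1$ of $2\,\mathrm{Id}$, so $f_{Q'}$ is strongly convex on $V$ with a unique critical point $\mu_{Q'} \in V^o$; (ii) on $K \setminus V$ one has $|f_{Q'} - f_Q| < \delta/4$, forcing $f_{Q'}(p) > f_{Q'}(\mu_{Q'}) + \delta/2$; and (iii) outside $K$ the quadratic growth of $d_P^2(\cdot,z)$ uniformly in $z \in \mathrm{supp}(Q')$ makes $f_{Q'}$ exceed any preset value. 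Together, these force $\mu_{Q'}$ to be the unique global minimum of $f_{Q'}$. The main obstacle is the quantitative $C^2$-closeness of $f_{Q'}$ to $f_Q$ in the previous paragraph: the integrand $d_P^2(p,z)$ is only piecewise $C^2$, and its Hessian in $p$ can jump as $z$ crosses a permutation-transition locus. Controlling these jumps uniformly as $Q'$ varies in a $B^3$-ball will likely require a partition-of-unity argument on the cells $Z_\sigma(p) = \{z : \sigma \text{ achieves } d_P(p,z)\}$, exploiting that $Q$ itself is supported away from these transition hypersurfaces and hence so is most of $Q'$ when $\zeta$ is small.
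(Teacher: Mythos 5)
Your architecture --- reduce to the Euclidean picture on $F_{a/4}$ where $d_P=d_E$ (so that $f_Q(p)=\|p\|^2-2p\cdot \bar z_Q+\mathrm{const}$ is a nondegenerate quadratic with a single minimum at the barycenter), set up an energy gap on a compact annulus, and then run a $C^0$/$C^2$ perturbation argument in $Q'$ --- is the natural one and matches the spirit of the paper's treatment (the paper relegates the proof to Supplement C precisely because the quantitative stability step is the nontrivial part). However, two load-bearing steps in your plan are asserted rather than proved, and one of them fails as stated. First, the claim that a single compact $K$ ``contain[s] the supports of all $Q'$ in the ball (valid for $\zeta$ small)'' is false: an unweighted Sobolev ball does not constrain where the support of $Q'$ sits, only the size of the perturbation. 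A density $Q'=Q+\eta_M$ with $\|\eta_M\|_{B^3}<\zeta$ can carry mass $O(\zeta)$ at distance $M$ from $E$ with $M$ arbitrary, so neither the second moment of $Q'$ nor the location of the minimizer of $f_{Q'}$ is controlled by $\zeta$ alone. This is not cosmetic: writing $f_{Q'}(p)=\|p\|^2-2\int\max_{\sigma}(p\cdot\sigma z)\,Q'(dz)+\mathrm{const}$, the far mass contributes a convex piecewise-linear term with gradient of order $\zeta M$, and a term like $\|p\|^2-2\zeta M\max_\sigma(p\cdot\sigma e)$ can have several local minima once $\zeta M$ is of unit order. So you must either show that the relevant Sobolev norm (or the normalization $\int Q'=1$ together with $Q'\ge 0$ and a quantitative coercivity estimate) forces all minimizers of $f_{Q'}$ into a fixed compact set, or restrict the class of $Q'$; ``$\zeta$ small'' by itself does not do it.

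Second, the estimate $\|f_{Q'}-f_Q\|_{C^2(K)}\le C\zeta$ is the actual content of the theorem and you defer it. The difficulty is exactly where you locate it, but it is worth being precise about why a Sobolev $3$-norm (rather than $L^1$ or $C^0$ closeness) is what the statement demands: for fixed $p$ the function $z\mapsto d_P^2(p,z)$ switches branches across the hyperplanes $H_{p,\sigma}=\{z:(p-\sigma^{-1}\cdot p)\cdot z=0\}$, and differentiating $\int \min_\sigma\|p-\sigma z\|^2\,Q'(z)\,dz$ twice in $p$ produces, besides the benign bulk term $2\,Q'(\R^D)\,\mathrm{Id}$, negative surface integrals of $Q'$ over the moving loci $H_{p,\sigma}$. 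Bounding these requires control of the \emph{trace} of $Q'$ on a family of hyperplanes (uniformly in $p\in V$), which is where the higher Sobolev regularity must be spent; smallness of $\|Q'-Q\|_{L^1}$ does not bound a codimension-one integral. Your observation that $E$ lies at uniformly positive distance from every $H_{p,\sigma}$ for $p$ in a compact subset of the interior of $F_{a/4}$ (which follows from the strict inequality $p\cdot(z-\sigma z)>0$ for $p,z$ in the cone, $\sigma\neq\mathrm{id}$) is correct and is the right starting point, since it makes the surface terms vanish for $Q$ itself; but you still need to convert $\|Q'-Q\|_{B^3}<\zeta$ into an $O(\zeta)$ bound on $\int_{H_{p,\sigma}}Q'\,dS$, and to verify that $f_{Q'}$ is in fact twice differentiable at the candidate minimizer rather than merely semiconcave. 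Until those two estimates are supplied, what you have is a correct proof outline with the theorem's two essential inequalities left unproved.
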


Here $B_Q^3(\zeta)$ denotes a ball of radius $\zeta$ around $Q$ in a Sobolev $3$-norm, and $C_c(F)$ denotes continuous functions with compact support in $F$. The proof is in Supplement C.

Secondly, we prove that any compact subset $E$ of $\ud$ admits an isometric embedding in a normed space 
$\mathcal B(E)$
%$\mathcal B(E)$ of bounded functions on $E$ 
such that any 
smooth distribution supported in $E$ has a unique Fr\'echet mean in $\mathcal B(E).$
 This Fr\'echet mean may not lie in the image of $E$.
 
\begin{theorem} \label{embedding} Let $Q$ be a continuous probability distribution with support in a compact subset $E$ of $\ud$ There is an isometric embedding $\iota$ of $E$ into $\mathcal B(E)$, the space of bounded functions on $E$ in the sup norm, such that the Fr\'echet function $f_Q$ extends to a function $\overline{f}_Q$ on the closed convex hull of 
$\iota(E)$ and such that $\overline{f}_Q$ has a unique minimum.
\end{theorem}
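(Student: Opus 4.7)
My plan is to combine the classical Kuratowski isometric embedding with Mazur's theorem on compact convex hulls, and then tackle uniqueness of the minimizer via strict convexity driven by the continuity hypothesis on $Q$.

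First, I would define $\iota : E \to \mathcal{B}(E)$ by $\iota(x)(z) = d_P(x,z)$. Since $E$ is $d_P$-compact, each $\iota(x)$ is a bounded (in fact $1$-Lipschitz, hence continuous) real-valued function on $E$. The reverse triangle inequality $|d_P(x,z) - d_P(y,z)| \le d_P(x,y)$, combined with equality at $z = y$, yields $\|\iota(x) - \iota(y)\|_\infty = d_P(x,y)$, so $\iota$ is an isometric embedding. Then $\iota(E)$ is compact in $\mathcal{B}(E)$, and Mazur's theorem implies that the closed convex hull $C := \overline{\mathrm{conv}}(\iota(E))$ is a compact, convex subset of $\mathcal{B}(E)$.

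Next, I would define the extension $\overline{f}_Q : C \to \R$ by
\[
\overline{f}_Q(g) := \int_E \|g - \iota(y)\|_\infty^2 \, dQ(y).
\]
On $\iota(E)$ this reduces to $f_Q$ by the isometry, so $\overline{f}_Q$ genuinely extends $f_Q$. The integrand is continuous in $g$ uniformly in $y \in E$ on the bounded set $C$, so $\overline{f}_Q$ is continuous and convex on the compact convex set $C$; thus it attains its minimum. The main obstacle is uniqueness, because the sup norm on $\mathcal{B}(E)$ is not strictly convex. The strategy is to show that $\overline{f}_Q$ itself is strictly convex on $C$ even though its integrand is not. For distinct $g_1, g_2 \in C$ and $\lambda \in (0,1)$, convexity of $t \mapsto t^2$ gives
\[
\|\lambda g_1 + (1-\lambda) g_2 - \iota(y)\|_\infty^2 \le \lambda \|g_1 - \iota(y)\|_\infty^2 + (1-\lambda)\|g_2 - \iota(y)\|_\infty^2,
\]
with strict inequality as soon as $\|g_1 - \iota(y)\|_\infty \ne \|g_2 - \iota(y)\|_\infty$.

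To complete the argument, I would pick $z_0 \in E$ with $g_1(z_0) \ne g_2(z_0)$ (possible since $g_1, g_2 \in C \subset C(E)$ are distinct continuous functions) and show that for $y$ in a small neighborhood of $z_0$ the two sup norms $\|g_i - \iota(y)\|_\infty$ are forced apart: when $y$ is close to $z_0$, $\iota(y)(z) = d_P(y,z)$ is near $d_P(z_0,z)$, and the sup $\|g_i - \iota(y)\|_\infty$ is sensitive to $g_i(z)$ for $z$ near $z_0$ since $d_P(y,\cdot)$ has a unique minimum at $z = y$. The continuity hypothesis on $Q$ then guarantees that this open neighborhood of $z_0$ carries positive $Q$-mass, so the pointwise strict inequality survives the integration, yielding $\overline{f}_Q(\lambda g_1 + (1-\lambda)g_2) < \lambda \overline{f}_Q(g_1) + (1-\lambda)\overline{f}_Q(g_2)$ and hence a unique minimizer. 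I expect the delicate sup-norm analysis at $z_0$ to be the hard part; one may need to invoke the Krein--Milman representation of $g \in C$ as a limit of convex combinations of Kuratowski functions $\iota(x)$, or a direct approximation argument reducing to $g_i = \iota(x_i)$ where the sup is attained at $z = y$ and equals $d_P(x_i,y)$.
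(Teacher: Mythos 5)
Your skeleton --- the Kuratowski embedding $\iota(x)=d_P(x,\cdot)$, Mazur's theorem to make $C:=\overline{\mathrm{conv}}(\iota(E))$ compact, and uniqueness via strict convexity of the extended Fr\'echet function --- is the right one and matches the structure that the theorem statement itself dictates. The genuine gap is exactly where you flag it: the strict convexity step, and your proposed resolution would not work as written. Picking $z_0$ with $g_1(z_0)\neq g_2(z_0)$ and arguing that $\Vert g_i-\iota(y)\Vert_\infty$ is ``sensitive to $g_i(z)$ for $z$ near $z_0$'' is unsupported: the sup norm is a global quantity and may be attained far from $z_0$ at a value common to $g_1$ and $g_2$, and nothing in your sketch excludes this. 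In addition, the claim that continuity of $Q$ forces a neighborhood of $z_0$ to carry positive mass is false --- a non-atomic $Q$ can have support strictly smaller than $E$, and if $g_1,g_2$ differ only off the support your integrated inequality degenerates to an equality. You must first replace $E$ by the (still compact) support of $Q$.

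The gap closes with one exact identity that makes the ``delicate sup-norm analysis'' unnecessary. For $g=\sum_i\lambda_i\iota(x_i)$ in the convex hull and any $y\in E$, the triangle inequality gives, for every $z$,
\[
|g(z)-d_P(y,z)|\;\le\;\sum_i\lambda_i\,|d_P(x_i,z)-d_P(y,z)|\;\le\;\sum_i\lambda_i\,d_P(x_i,y)\;=\;g(y),
\]
with equality at $z=y$; hence $\Vert g-\iota(y)\Vert_\infty=g(y)$, and by uniform convergence this identity persists on all of $C$. Your extension is therefore exactly $\overline{f}_Q(g)=\int_E g(y)^2\,Q(dy)$, the squared $L^2(Q)$-norm of $g$, which is strictly convex modulo functions agreeing $Q$-a.e.; since elements of $C$ are continuous and, after the reduction above, every nonempty open subset of $E$ has positive $Q$-mass, distinct elements of $C$ cannot agree $Q$-a.e., so the minimizer is unique. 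With that identity and the support reduction inserted, your argument is complete and is in substance the intended proof.
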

The proof is also in Supplement C.  
\\

We end this section with an example of the difficulties of handling a graph with a non-distinct weight vector.

\begin{example} \label{intolerable}
As in Figure 2,  take the distinct weight vector $\ve = (3,2,1).$  The fundamental domain is 
$F = F_{\ve}  = \{x\geq y\geq z\}.$  The proof of Corollary \ref{solidcor} shows that
every $\ve_1\in F_{a/2}$ has a neighborhood $U$ on which \begin{equation}\label{intol}
d_P([\ve_2],[\ve_3]) = d_E(\ve_2, \ve_3),\  {\rm for\ all}\ \ve_2, \ve_3\in U.
\end{equation}
In fact, we can take $U= F_{a/2}$ (See Example B.1 for a stronger statement.)  

We now show that the vector $(1,1,1)\in \partial F$ has no neighborhood $U$ in $F$ on which
(\ref{intol}) holds. Note that the stabilizer group of $(1,1,1)$ is all of $\Sigma_3.$

Take $\ve_2 = (1+a_1, 1+a_2, 1+a_3)$ with $a_1>a_2>a_3>0$, and $\ve_3 = 
 (1+b_1, 1+b_2, 1+b_3)$ with $b_1>b_2>b_3>0$.  For small $a$'s and $b$'s, these vectors will be arbitrarily close to $(1,1,1).$  Then
 $$d_E^2(\ve_2, \ve_3) = \sum_{i=1}^3 (a_i-b_i)^2.$$
 Take 
\begin{equation}\label{rest}
b_1\approx a_2, b_2\approx a_3\approx b_3
\end{equation}
 and let $\sigma = (132).$
 Then 
 \begin{equation}\label{est}d_P^2([\ve_2], [\ve_3]) < d_E^2(\ve_2, \ve_3)\end{equation}
 if 
 \begin{equation}\label{est2}\sum_{i=1}^3 (a_i-b_i)^2 > \sum_{i=1}^3 (a_i-b_{\sigma(i)})^2.
 \end{equation}
 The left hand side of (\ref{est2}) is approximately $(a_1-a_2)^2 + (a_2-a_3)^2$, and the right
 hand side is approximately $(a_1-a_3)^2.$  Thus (\ref{est2}) holds if
 $$(a_1-a_2)^2 + (a_2-a_3)^2 >(a_1-a_3)^2, i.e.\ {\rm if} \ a_2^2 + 2a_1a_3 > a_2 a_1 + a_2a_3.$$
 Since $a_2^2> a_2a_3$, we just need
 \begin{equation}\label{dec} 2a_3 > a_2.
 \end{equation}
 So once we choose the $a$'s and $b$'s to satisfy (\ref{rest}),(\ref{dec}), we obtain (\ref{est}), which proves the claim. 
 
We conclude  that there is no neighborhood $U$ of $(1,1,1)$ inside $\R^3$ (not just inside $F$) on which Euclidean distances in $U$ agree with Procrustean distances in $q(U)\subset \calU_3.$  This illustrates the impossibility of applying\\
 \cite{afsari11} to the singular point $[(1,1,1)]$ of $\calU_3$.
\end{example}

%It
% is natural to ask if Thm.~\ref{bestestimate} on central limit theorems
%extends to distributions
%  with compact support partially outside the $F_{a/3}$ cone.  Such an extension is possible, although these distributions have most of their support inside $F_{a/3}$ in a precise sense.  For details, see Supplement B.

\section{Discussion}
\label{sec:disc}

%[ALL:  Here we can bring in issues of computing, binary networks, possible limit theorems on distances rather than means themselves, etc.]
Our work pertains to the geometric and statistical foundations of unlabeled networks. Specifically, we characterize the geometry of the space of such networks, define an appropriate notion of means or averages  of unlabeled networks, and derive the asymptotic behavior of the empirical average network.  This last result is a necessary precursor for the development of a variety of statistical inference tools in analogy to those encountered in a typical `Statistics 101' course, as we demonstrate in the context of hypothesis testing.  A key technical contribution of our work is that of providing broader conditions than available from general results on manifolds for  uniqueness of the Fr\'echet mean network of a distribution on the space of unlabeled networks. 

Our work here sets the stage for a program of additional research with multiple components.  Firstly, we expect that the asymptotic theory we develop can be extended in interesting ways. 
For example, it is unclear if the central limit theorem Theorem \ref{clt} is valid for either of our extensions (Theorems \ref{main theorem}, \ref{embedding}) to the main Theorem \ref{bestestimate} on uniqueness of the Fr\'echet mean.    For Theorem \ref{main theorem}, the obstacles to establishing a CLT  are technical differentiability issues, but there are deeper conceptual difficulties in the setting of Theorem \ref{embedding}.  
More broadly, when the uniqueness condition fails, there is a possibility of establishing a limit theorem based on the set distance between the sample Fr\'echet means and the population Fr\'echet means, in the spirit of analogous work done for the estimation of level sets~\cite{chen2016density,mason2009asymptotic}. However, the Hausdorff distance between the two does not necessarily go to zero,  as  in the counterexample in \cite{rabivic03}, so these limit theorems will be more subtle.  

Additionally,  while we have adopted Euclidean distance in our work here,  there are other norms (cut distance, edit distance, etc. \cite{lovasz2012large}) that merit consideration, and other Riemannian metrics may be more suitable for specific classes of  networks ({\it e.g.}, ego-networks).  Both directions will require more sophisticated techniques than the current paper: for Riemannian metrics, we expect to need Rauch-type comparison theorems as in \cite[Chapter 10]{docarmo}, while for norms like the cut distance that do not come from Riemannian metrics, we need very different metric geometry methods as in 
\cite{Burago}.   

In the particular context of ego-networks -- in the case where the ego node is included, and hence (uniquely among the nodes) labeled -- the existence of one labeled node reduces the symmetry group of the space of networks to the stabilizer subgroup of the node.  The Euclidean metric does not exploit this reduction of the symmetry group, whereas other Riemannian metrics more closely reflect this reduction.  For example, in the toy example where the ego-node is connected to only one other node, the hyperbolic metric of constant negative sectional curvature singles out the ego-node.  However, if more realistically the ego-node has degree greater than one, then most Riemannian metrics invariant under the stabilizer group will have both positive and negative sectional curvatures. It would be very interesting to see how our main uniqueness result (Theorem \ref{bestestimate}) generalizes to these metrics. 

It is also natural to expand our work to treat networks of different sizes.  The space $\ud = \calO_D/\Sigma_d$ of  unlabeled networks on $d$ nodes isometrically embeds into  $\mathcal U_{d+1}$ by adding to any labeled network in $\calO_d$ a new node with zero weights to all other nodes, and embedding $\Sigma_d$ into $\Sigma_{d+1}$ as the stabilizer subgroup of the new node.  We can then take the direct limit 
$\lim_{d\to\infty}\ud = \mathcal U_\infty$ as the space of all unlabeled networks. 
 It is unclear if a CLT can be expected for $\mathcal U_\infty$, but it should be possible to produce a CLT for the space of unlabeled networks of size at most a fixed constant.  
 
 There is also work to be done %relating to the task of 
in calculating the Fr\'echet mean(s) for a sample of unlabeled networks and applying our results in practice (e.g., comparing the means of two subpopulations).  Calculation of the Fr\'echet mean is NP-hard, as a simple argument shows that a brute force approach requires $O((d!)^n)$ operations for $n$ samples of networks with $d$ nodes.  This is reinforced by the complexity of the algorithms discussed in the Appendices on calculations of fundamental domains and Fr\'echet integrals, due to the geometric complexity of the Procrustean distance on the space $\mathcal U_d$ of unlabeled networks.  Jain \cite{Jaina2016} provides an iterative algorithm for calculation of the Fr\'echet mean with complexity $O(n(d!))$, offering a major improvement.  Nevertheless, for large networks, this can still be expected to scale poorly.  Ultimately, however, in order to construct confidence regions and related hypothesis testing procedures, there is the computation of the covariance to consider, which, to the best of our knowledge, has yet to be addressed.  

%\textcolor{red}{ERIC:  This is close, but needs to be modified per the issue of NP hard problems versus algorithms I raised.}  
%\textcolor{blue}{Steve:  How about:
%There is also work to be done relating to the task of calculating the Fr\'echet mean(s) for a sample of unlabeled networks and applying our results in practice (e.g., comparing the means of two subpopulations).  
%While using stochastic gradient methods is detailed in \cite{Jaina2016}, the author notes that this method is NP hard.  This is reinforced by the complexity of the algorithms discussed in the Appendices on calculations of fundamental domains and Fr\'echet integrals, due to the 
%geometric complexity of the Procrustean distance on the space $\mathcal U_d$ of unlabeled networks.}

Finally, although our work here assumed weighted undirected networks, it would be important to investigate how the finite set of undirected binary networks fits into our theory.  In particular, the definition and uniqueness of Fr\'echet means  needs to be elaborated. It is
 crucial to understand the placement of the  binary graphs inside the fundamental domains we constructed; preliminary work indicates that binary graphs are  unfortunately widely scattered throughout a fundamental domain.

%computing issues? such as computation of the fundamental domain/Fr\’echet integral. 

\appendix

\section{Computing fundamental domains}\label{Jackson2}

In the two appendices, we discuss the computational difficulties of implementing the theory in Section~4.  In this appendix, we show 
that the fundamental domain $F_ {\vec w}$ is highly sensitive to the choice of distinct vector $\vec w$ as
axis.  
%When performing an orbifold integral, it is useful to integrate over a fundamental domain in the base space. For example, if $f$ is a $\mathbb{Z}_{4}$ invariant function on $\mathbb{R}^{2}$ we have $\int_{\mathbb{R}^2/\mathbb{Z}_{4}} f(\left[y\right]) d\left[y\right] \triangleq \frac{1}{4} \int_{\mathbb{R}^{2}} f(y) dy = \int_{F} f(y) dy $ where $F$ is a fundamental domain for the $\mathbb{Z}_{4}$ action. 

We use a Dirichlet fundamental domain for the action of $\Sigma_d$ on $\mathbb{R}_{\ge 0}^D$. By Lemma \ref{fd},
$F= F_{\vec w}= \bigcap_{\sigma \in \Sigma_{n}} \{z \in \mathbb{R}_{\ge 0}^{D} : d_{E}(\vec w,\vec z) \le d_{E}(\vec w,\sigma \vec z) \}$ for a distinct vector $\vec w \in \R_{\ge 0}^{D}$. This is the intersection of $d!+D-1 $ half-spaces, where the $D$ coordinate half-spaces are given by the inequalities $z_j \geq 0$. The cone
$F$   is a convex, non-compact polyhedral region in $\mathbb{R}_{\ge 0}^{D}$. The $d!-1 $ half-space regions are given by the linear inequalities:
$$\sum_{j=1}^{D} (w_{\sigma(j)}-w_{j})z_{j} \le 0,$$
for $\sigma \neq {\rm Id}.$

 Sage provides efficient tools for converting an input system of linear inequalities into a minimal description of the polyhedral output region; see Supplement D.
 %\footnote{Implemented and freely available: https://github.com/jacksonwalters/fund-domain-networks. }
 
 We consider the simplest nontrivial example of graphs with four vertices: $ d=4$, $ D=\binom{d}{2}=6$.

\begin{example}   Choose the distinct vector $\vec  w=(1,2,3,4,5,6)$. Sage gives $F_{ w}$ as the intersection of 7 half-spaces or the convex hull of 7 rays [see Figure \ref{fig:sage_output_fund_domain}].

\begin{figure}[h]
\centering
\includegraphics[scale=.6]{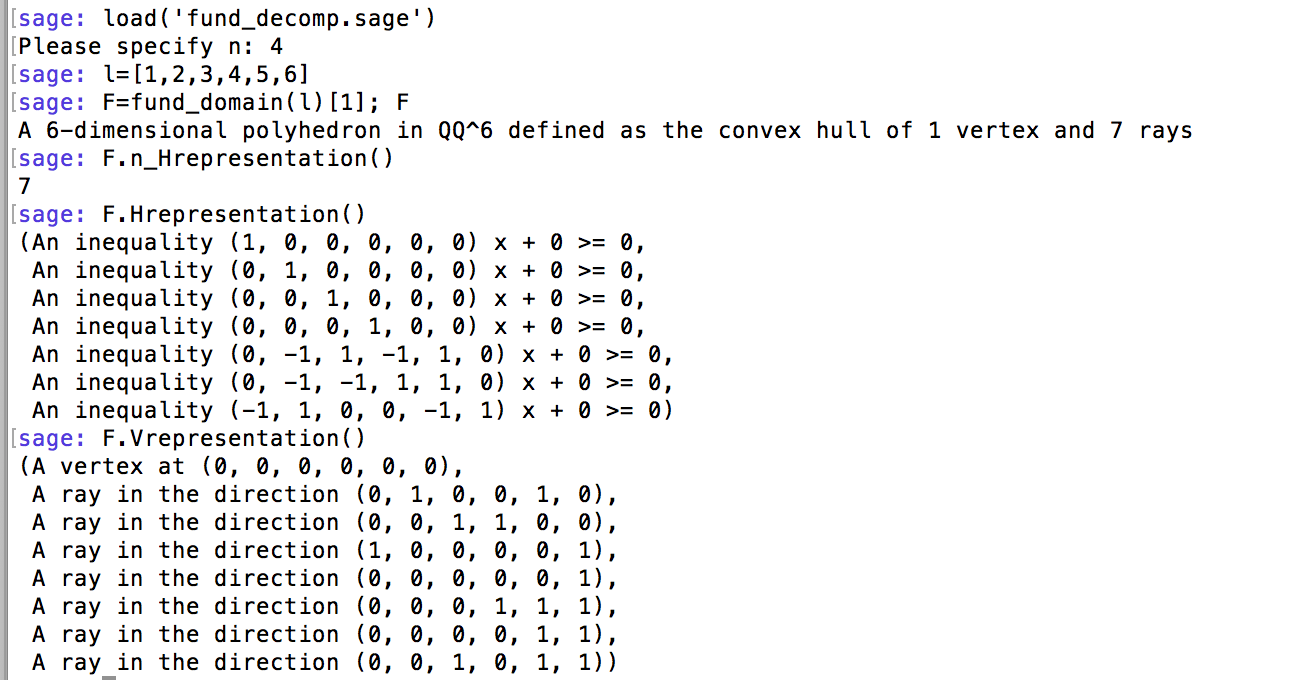}
\caption{Sage output for computation of a fundamental domain centered at the distinct vector $ \vec w=(1,2,3,4,5,6)$. $F.Hrepresentation()$ lists the half-spaces whose intersection is the fundamental domain. Note that we started with $4!+\binom{4}{2}-1 =29$ inequalities, and have narrowed it down to 7. $F.Vrepresentation()$ lists the 7 rays whose convex hull is the fundamental domain.}
\label{fig:sage_output_fund_domain}
\end{figure}
\end{example}

\begin{example} 
%If we change $l$ by a small amount, we will see a drastic shift in the description of $F_{l}$. For instance, 
Now choose $\vec w=(1,2,3,4,5,6.1)$.  $F_{ \vec w}$ is now described as the convex hull of 79 rays, or the intersection of 18 half-spaces [see Figure \ref{fig:sage_output_fund_domain_2}].

\begin{figure}[h]
\centering
\includegraphics[scale=.67]{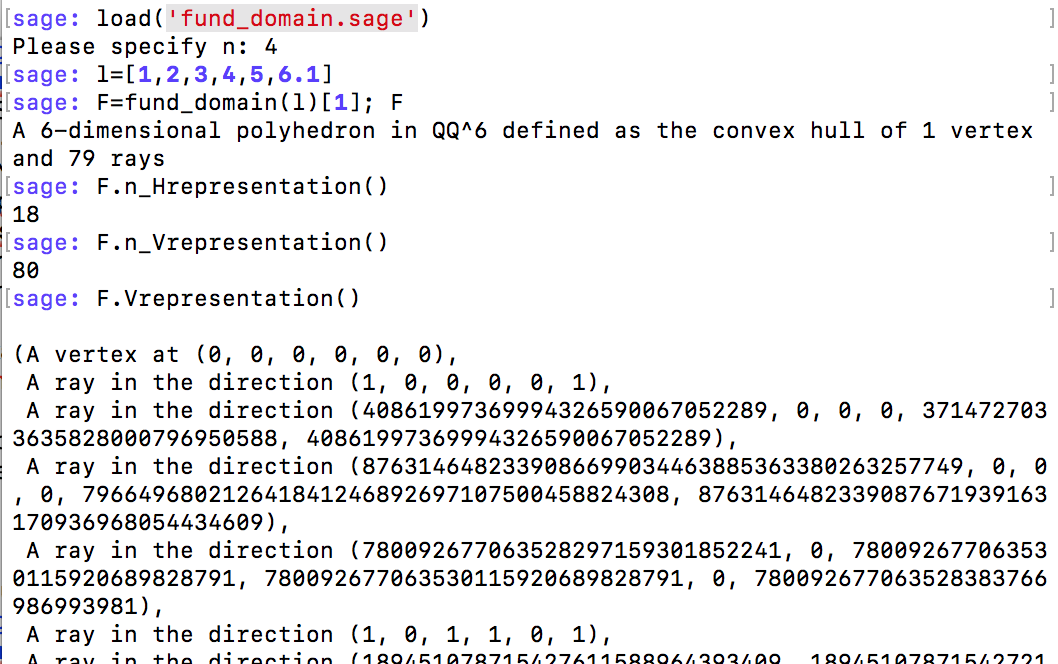}
\caption{A partial Sage output for computation of a fundamental domain centered at the distinct vector $\vec w=(1,2,3,4,5,6.1)$. $F.Hrepresentation()$ lists the half-spaces whose intersection is the fundamental domain. Note that we started with $4!+\binom{4}{2}-1 =29$ inequalities, and have narrowed it down to 18. $F.Vrepresentation()$ lists the 79 rays whose convex hull is the fundamental domain.}
\label{fig:sage_output_fund_domain_2}
\end{figure}
\end{example}

\section{Computation of the Fr\'echet integral}\label{Jackson3}

In this Appendix, we highlight the difficulty of computing the Fr\'echet integral 
$f([\vx]) =
\int_{\calU_d} d^2_P([\vx],[\vec y]) dQ'([\vec y]) $ for all $[\vec x]$. (Here $Q' = q_*Q$ in the notation of 
Thm~\ref{bestestimate}.)
Even when a
fundamental domain $F_{\vx}$ has been explicitly determined for a fixed $\vec x$, so that
 $f([\vec x]) =
 \int_{F_{\vx}} d^2_E(\vx,\vec y)dQ(\vec y),$ we have seen in Appendix \ref{Jackson2} that the shape of 
 $F_{\vx}$ depends delicately 
  on $[\vec x].$ 

%We might want to start to rewrite the Frech\'{e}t integration by choosing regions $R_{\sigma}(x) \subset \mathbb{R}_{\ge 0}^{N}$ for which $d_P(\left[x\right],\left[y\right])=d_{E}(x,\sigma y)$ on the nose for all $y \in R_{\sigma}(x)$. Suppose we fix $x$ in $\mathbb{R}_{\ge 0}^N$. As we vary $y$, the choice of minimizing $\sigma$ will eventually change to some $\tau \in \Sigma_{n}$ as $y$ crosses the hyperplane $\{z \in \mathbb{R}_{\ge 0}^{N}: x\cdot \sigma z = x\cdot \tau z \}=\{z \in \mathbb{R}_{\ge 0}^{N} : d_{E}(x,\sigma z) = d_{E}(x,\tau z)\}$. Our region $R_{\sigma}(x)$ is then $\displaystyle \bigcap_{\tau \in \Sigma_n} \{z \in \mathbb{R}_{\ge 0}^{N}: x\cdot \sigma z \ge x\cdot \tau z \}$. This is exactly the Diriclet domain $F_{\sigma x}$. \\

We can instead divide $\mathbb{R}_{\ge 0}^{D}$ into $d!$ regions $F_{\sigma}(\vx) = 
\{\vec y\in \mathbb{R}_{\ge 0}^{D}: d_P(\left[\vx\right],\left[\vec y\right])=d_{E}(\vx,\sigma \cdot\vec
y)\}.$ Note that 
$F_{\sigma}(\vx)$ is  a fundamental domain for $\sigma \cdot \vx.$  Since
$\displaystyle F_{\vec w} = \bigcup_{\sigma \in \Sigma_{d}} F_{ \vec w} \cap F_{\sigma\cdot\vec x}$ for a fixed distinct vector $ \vec w$,
the Fr\'echet integral is given by
$$ f(\vec x) =\int_{{\mathcal U}_d} d_{P}^2(\left[ \vx\right],\left[\vec y \right]) dQ'(\left[\vec y\right])
=\sum_{\sigma \in \Sigma_{d}}\int_{F_{ \vec w} \cap F_{\sigma\cdot\vx}} d^2_{E}(\vx,\sigma\cdot
\vec y) dQ(\vec y). $$
%We don't distinguish between the measure $\nu$ on $\calU_d$ and its lift to a measure on 
%$F_ w.$ 
Although on the right hand side we now have a  sum of Euclidean integrals for each $\vx$, as in Appendix \ref{Jackson2} 
it is difficult to explicitly compute $F_{ \vec w} \cap F_{\sigma\cdot \vx}$. 

We illustrate this computation in the simple case $d=3$.
Already for  $d=4$, the computation becomes too lengthy for inclusion here.

\begin{example} $d=3.$ First we show that the fundamental domain $F_{\vx}$ can be chosen to depend only on the ordering 
ord$(\vx)$
(e.g. from largest to smallest, as in Figure 2) of the components of a distinct vector $\vx$. The Procrustean distance between two points is $d_{P}\left(\left[\vx\right],\left[\vec y\right]\right)=\min_{\sigma \in \Sigma_{3}} d_{E}(\vx,\sigma\cdot\vec y)$. 
To minimize the Euclidean distance, we choose $\sigma$ which reorders $\vec y$ to match the ordering of $\vx$, as any other $\sigma$ cannot decrease the distance. (This uses the special fact that
 $\Sigma_{d=3}$ is the full permutation group of the $D=3$ set of weight vectors). Therefore,  we can choose 
  $F = F_{\vx}=\{ \vec y \in \mathbb{R}_{\ge 0}^{3} : d_{E}(\vx,\vec y) = d_{P}([\vx],[\vec y]) \}=\{\vec y \in \mathbb{R}_{\ge 0}^{3} : \text{ord}(\vec y)=\text{ord}(\vx)\}$ to be
  independent of the distinct vector $\vec x$. % This is a special feature of $d=3.$
  %Thus, there are six possible fundamental Dirichlet domains labeled by $\sigma \in \Sigma_{3}$. 
 The 
  Fr\'{e}chet integral %reduces to the single integral
  for a compactly supported probability measure $Q'$ on $\calU_3$ (or equivalently for $Q$ on $F$)
  equals
\begin{align*}f(\left[\vx\right])&=\int_{F} d_{E}(\vec x,\vec y)^{2} dQ(\vec y)\\
&=\Vert \vx\Vert^{2}\int_{F} dQ(\vec y) -2\vec x \cdot \int_{F} \vec y \ dQ(\vec y) + \int_{F} \Vert \vec y
\Vert^{2} dQ(\vec y)\\
&= \Vert \vx\Vert^2 -2 \vx\cdot \int_{F} \vec y\ dQ(\vec y)  + B\\
& =  \Vert \vx\Vert^2 -2 \sum_i C_i x^i  + B,\\
\end{align*}
where the second integral on the last line is the dot product of a vector and a vector valued integral,  $C_i = \int_{F} y^i dQ(\vec y)$, and $B = \int_{F} \Vert \vec y
\Vert^{2} dQ(\vec y).$
Thus $f$ is quadratic in $\vx$ on distinct vectors.  Since the distinct vectors are dense in Euclidean space, $f$ is quadratic on all vectors. 
Therefore, $f$ is strictly convex. Since we are minimizing over a convex region, $f$ has a unique global minimum. To explicitly compute it, note that $F$ has eight strata in varying dimensions 0, 1, 2, and 3; these are labeled $(1),\ldots,
 (8)$ in the table below. The restriction to each stratum is smooth away from the lower dimensional boundaries, so we can simply minimize on each open piece to find eight local minima $x^{*}$.
\medskip

\begin{center}
\begin{tabular}{ |c|c|c|c| } 
%\resizebox{4 in}{2 in}{
 \hline
 \# & \text{dim} & \text{Region} & $x^{*}=$ \\
 \hline\hline
  (1) & 3 & $x_{3} \ge 0$ and $0 < x_{1} < x_{2} < x_{3}$ & $%\frac{1}{M}
  \left(C_{1},C_{2},C_{3}\right)$ \\
  \hline
  (2) & 2 & $x_{1}=0$ and $0 < x_{2} < x_{3}$ & $%\frac{1}{M}
  \left(0,C_{2},C_{3}\right)$  \\
  \hline
  (3) & 2 & $0 < x_{1}=x_{2} < x_{3}$ & $%\frac{1}{M}
 \left(\frac{1}{2}(C_{1}+C_{2}),\frac{1}{2}(C_{1}+C_{2}),C_{3}\right)$\\
  \hline
  (4) & 2 & $0 < x_{1} < x_{2}=x_{3}$ & $%\frac{1}{M}
  \left(C_{1},\frac{1}{2}(C_{2}+C_{3}),\frac{1}{2}(C_{2}+C_{3})\right)$\\
  \hline
  (5) & 1 & $x_{1}=x_{2}=0$ and $x_{3} > 0$ & $%\frac{1}{M}
  \left(0,0,C_{3}\right)$ \\
  \hline
  (6) & 1 & $x_{1}=0$ and $0 < x_{2}=x_{3}$ & $%\frac{1}{M}
  \left(0,\frac{1}{2}(C_{2}+C_{3}),\frac{1}{2}(C_{2}+C_{3})\right)$ \\
  \hline
  (7) & 1 & $0 < x_{1}=x_{2}=x_{3}$ & $%\frac{1}{M}
  \left(C', C', C'\right)$\\
  %\frac{1}{3}(C_{1}+C_{2}+C_{3}),\frac{1}{3}(C_{1}+C_{2}+C_{3}),\frac{1}{3}(C_{1}+C_{2}+C_{3})
  %\right)$\\
  \hline
  (8) & 0 & $x_{1}=x_{2}=x_{3}=0$ & (0,0,0) \\
 \hline
 %}
\end{tabular}
\end{center}
\medskip

Here $C' = (1/3) (C_{1}+C_{2}+C_{3}).$
The true global minimum will depend on the values of $C_{1}$, $C_{2}$, and $C_{3}$.

\end{example}

\section*{Acknowledgements}

We would like to thank the referees for very helpful suggestions. This research is funded in part by an ARO grant W911NF-15-1-0440. In addition, LL was supported by NSF grants  IIS 1663870 and DMS Career 1654579, and DARPA grant N66001-17-1-4041; EK was supported by AFOSR grant 12RSL042.

%\begin{supplement}
\centerline{SUPPLEMENTARY MATERIAL}
All the supplementary materials (see a brief descritpion below) can be found in
{\tt https://github.com/KolaczykResearch/Ave-Unlab-Nets}.

{\bf Supplement A:}
A description of $\calU_3$, a sketch of Theorem \ref{stratspace}, and results on the topology of
$\ud$
%\slink[url]{}
%\sdescription{The title says it all.}
%\end{supplement}

%\begin{supplement}
{\bf Supplement B: \label{suppB}}
Proof of Proposition \ref{4.6}.
%\slink[url]{https://github.com/KolaczykResearch/Ave-Unlab-Nets/SupplB.pdf}
%\sdescription{The title says it all.}
%\end{supplement}
%\begin{supplement}

{\bf Supplement C:}
Improvements to the main Theorem \ref{bestestimate}
%\slink[url]{https://github.com/KolaczykResearch/Ave-Unlab-Nets/SupplC.pdf/}
%\sdescription{The title says it all.}
%\end{supplement}

%\begin{supplement}
{\bf Supplement D:}
Sage code for computing fundamental domains
%\slink[url]{https://github.com/KolaczykResearch/Ave-Unlab-Nets/SupplD/}
%\sdescription{The title says it all.}
%\end{supplement}

%\begin{supplement}
{\bf Supplement E:}
Color figures
%\slink[url]{https://github.com/KolaczykResearch/Ave-Unlab-Nets/SupplE.pdf}
%\sdescription{The title says it all.}
%\end{supplement}

\bibliographystyle{imsart-nameyear}
\bibliography{reference}

\end{document}